\newtheorem{theorem}{Theorem}[section]
\newtheorem{lemma}[theorem]{Lemma}
\newtheorem{prop}[theorem]{Proposition}
\newtheorem{cor}[theorem]{Corollary}
\theoremstyle{definition}
\theoremstyle{remark}
\newtheorem{remark}[theorem]{Remark}
\newcommand{\Lp}[2]{\left\Vert \, #1 \, \right\Vert_{#2}}
\newcommand{\llp}[1]{ \Vert \, #1 \, \Vert }
\newcommand{\rr}{\mathbb{R}}
\newcommand{\cc}{\mathbb{C}}
\newcommand{\nn}{\mathbb{N}}
\newcommand{\bd}{\partial}
\newcommand{\lapl}{\Delta}
\newcommand{\inprod}[2]{\langle #1, #2 \rangle}
\newcommand{\tr}{\operatorname{tr}}
\newcommand{\grad}{\nabla}
\newcommand{\vect}[1]{\mathbf{ #1 }}
\newcommand{\Tr}{\operatorname{Tr}}
\newcommand{\diag}{\operatorname{diag}}
\newcommand{\ch}{\operatorname{ch}}
\newcommand{\sh}{\operatorname{sh}}
\newcommand{\sv}[1]{|#1 \rangle}
\newcommand{\csv}[1]{\langle #1 |}
\newcommand{\nor}{\operatorname{Nor}}
\numberwithin{equation}{section}
\begin{document}

\title[Dynamical Hartree-Fock-Bogoliubov Approximation]{Dynamical Hartree-Fock-Bogoliubov 
Approximation of Interacting Bosons}

%    Remove any unused author tags.

%    author one information
\author{Jacky J. Chong and Zehua Zhao}

\address{Jacky J. Chong
\newline \indent  Department of Mathematics, University of Texas,
    Austin, U.S.A. }
\email{jwchong@math.utexas.edu}

\address{Zehua Zhao
\newline \indent  Department of Mathematics, University of Maryland, College Park, U.S.A. }
\email{zzh@umd.edu}

\thanks{}
%    author two information
%\author{}
\address{}
\curraddr{}
\email{}
\thanks{}

%\subjclass[2010]{Primary }

\keywords{}

%\date{\today}

\dedicatory{}

\begin{abstract}
We consider a many-body Bosonic system with pairwise particle interaction given by
$N^{3\beta-1}v(N^\beta x)$ where $0<\beta<1$ and $v$ a non-negative spherically-symmetric
function. Our main result is the extension of the local-in-time Fock space approximation of the exact 
dynamics of squeezed states proved in \cite{GM3} for $0<\beta<\frac{2}{3}$ to a global-in-time 
approximation for $0<\beta<1$. Our work can also be viewed as a generalization of the results in \cite{BCS} to a more general set of initial data that includes coherent states along with an improved error estimate.
The key ingredients in establishing the Fock space approximation are the work of Grillakis and
 Machedon on the the local well-posedness theory \cite{GM4}, some recent established global estimates in \cite{CGMZ}, and our quantitative results on the uniform in $N$ global well-posedness of the time-dependent Hartree-Fock-Bogoliubov (TBHFB) system.  
\end{abstract}
%\thanks{Here we cover the case $0<\beta<1$ instead of the original $0<\beta<\frac{2}{3}$} 
\maketitle

\section{Introduction}
We consider a system of $N$ non-relativistic interacting spinless Bosons in three dimensional
space whose dynamics is governed by the $N$-body linear Schr\"odinger equation\footnote{We work 
under the assumption that
$\hbar=1$ and $2m=1$, where $m$ is the mass. However, it would be interesting to incorporate $\hbar$ in the calculation to see the explicit 
dependence of $\hbar$ in our results. Moreover, as written, \eqref{Linear-Schrodinger} models a system
 of interacting particles in 
the mean-field
scaling. Cf. section 1.8 of \cite{Gol}.}
\begin{align}\label{Linear-Schrodinger}
\left(\frac{1}{i}\frac{\bd}{\bd t}+\sum^N_{j=1}-\lapl_{x_j} + \frac{1}{N}\sum_{i>j}
v_N(x_i-x_j)\right)\Psi_N(t, x_1, \ldots, x_N) = 0 
\end{align}
where $ x_i \in \rr^3$ and  $v_N(x) = N^{3\beta}v(N^\beta x)$. In particular, we work exclusively
in the setting of repulsive interaction.  

The two main goals of this paper are to study of the global-in-time behavior of quantum fluctuations about the mean-field dynamics 
of \eqref{Linear-Schrodinger} and provide a quantitative method for effectively tracking the evolution
of the many-body quantum system in state space. Unfortunately, the 
 problem of tracking the exact dynamics of Bosonic systems in state space with arbitrary initial data is still not 
 tractable with the current available tools. See \cite{LGS} for a in-depth overview of the physics of ultracold Bose gas. Nevertheless, if we restrict ourselves to a special class of 
 initial data, such as coherent states or, more generally, squeezed states, we are able to obtain some positive 
 results in the direction of understanding the exact evolution of the many-body quantum system in state space by means of studying its effective dynamics. 
 
 Another interesting question that one could study is the range of the short-range scaling parameter $\beta$ associated 
 with the interaction potential $v_N$, which was first introduced in \cite{ESY2} as a way to model the different length scales in the problem.
 More precisely, we adopt the
 convention of setting the scale of the system to order one so that the density $\rho$ of the system is of order $N$. 
 Then the interaction potential
 $v_N$ has two length scales: the range of $v_N$ and the scattering length of $v_N$, denoted by $r_N$ and $a_N$, respectively. Here, we see
that $r_N\sim \mathcal{O}(N^{-\beta})$ and $a_N\sim \mathcal{O}(N^{-1})$. When $\beta \in (0, 1)$, we are in the situation
where $a_N\ll r_N\ll 1$. Then the effective interaction potential of $v_N$ is given by $(\int v)\rho$ where $\int v$ is
 the Born approximation of the two-body scattering process. However, when $\beta =1$, also known as the Gross-Pitaevskii regime, we have $r_N\sim a_N$. Then the effective two-body
scattering is just the full two-body scattering process that means the effective potential is given by $8\pi \rho a_N \sim \mathcal{O}(1)$. 
See appendices of \cite{ESY2, Lieb} for more detail. 
 
 Physically speaking, for $\beta \in (0, \tfrac{1}{3})$, we are in the situation where $a_N\ll \rho^{-\frac{1}{3}}\ll r_N$ ($\rho^{-\frac{1}{3}}$ 
 is called the mean inter-particle distance), which model a weakly interacting dense gas. But, once $\beta \in [\frac{1}{3}, 1)$, 
we enter the self-interacting regime or sometimes called the weakly interacting diluted gas regime, that is, 
$a_N\ll r_N\ll\rho^{-\frac{1}{3}}$. A way to see that \eqref{Linear-Schrodinger} models a weakly interacting gas when $\beta \in (0, 1)$ 
is by moving to the microscopic coordinates, i.e. $(t, x)\mapsto (\tau, q):=(N^{2\beta}t, N^{\beta}x)$. Thence, 
we see that the particle system is weakly coupled with the coupling constant being proportional to $N^{\beta-1}$.  
Let us remark that in three dimensional space the case $\beta=1$ is the endpoint case in the heuristic scaling analysis. 
In fact, when $\beta=1$ the system can model a dilute gas with rare but strong collisions, which suggests a 
more refined analysis is needed since, heuristically, we can no longer treat the interactions as a mere perturbation of the non-interaction
case, or, at the very least, not a small perturbation.

In recent years, many have contributed to the studies of effective dynamics for many particle systems. In the case of $\beta=0$ with 
 repulsive Coulomb interactions, Erd\"os and Yau in \cite{EY} proved the qualitative result, via the method of BBGKY hierarchy, that 
 the one-particle marginal density $\gamma^{(1)}_{N, t}$ associated with the wave function $\Psi_{N, t}$ with asymptotically factorized 
 initial state, i.e. $\Psi_{N, 0} \rightarrow \phi^{\otimes N}$ as $N\rightarrow \infty$, converges to $\sv{\phi_t}\csv{\phi_t}$  in trace 
 norm  in the mean-field limit of $N\rightarrow \infty$ where $\phi_t$ satisfies the Hartree equation
 \begin{align}
 \frac{1}{i}\frac{\bd}{\bd t}\phi(t, x) -\lapl_x \phi(t, x) + \left(\frac{1}{|\cdot|}\ast |\phi_t|^2 \right)\phi(t, x) = 0.
 \end{align}
 Using the Fock space method introduced by Hepp in \cite{Hepp} and subsequently extended by Ginibre and Velo in \cite{GV, GV2}, 
 Rodnianski and Schlein in \cite{RS} provided a rate of convergence of the one-particle marginal associated with the many-body 
 quantum system towards the Hartree dynamics in trace norm, that is\footnote{We adopt the standard notation $A \lesssim B$ to 
 mean there exists a constant, depending on some parameters, such that $A \leq CB$.},
 \begin{align}
    \operatorname{Tr}\left|\gamma^{(1)}_{N, t}-\sv{\phi_t}\csv{\phi_t}\right| \lesssim \frac{e^{Kt}}{\sqrt{N}}.
\end{align}
The estimate was later improved to $e^{Kt}N^{-1}$ in \cite{ES, CLS} with the optimal exponent in $N$. Using a second-order correction Fock space method 
introduced by Grillakis, Machedon and Margetis in \cite{GMM, GMM2}, Kuz in \cite{Kuz} provides a rate of convergence of the 
many-body quantum system to the Hartree dynamics in the sense of Fock space marginal density\footnote{One should note that the main 
result in Rodnianski and Schlein's paper is their result on the rate of convergence of the one-particle Fock marginal towards the 
Hartree dynamics. Whereas, the significance of Kuz's paper is that she was able to show that the mean-field estimate is actually 
valid for a much longer period of time then most proceeding results had indicated.}. Consequently, Kuz shown that 
\begin{align}
    \operatorname{Tr}\left|\gamma^{(1)}_{N, t}-\sv{\phi_t}\csv{\phi_t}\right| \lesssim \frac{\sqrt{1+t}}{N^{\frac{1}{4}}}
\end{align}
which in turn establishes the validity of the approximation for time $t$ of the order $\sqrt{N}$. Similar results were derived in \cite{FKS, 
KP} but the approaches are completely different from the above methods. 

For the case $0<\beta\leq 1$, Erd\"os, Schlein and Yau in a series of papers \cite{ESY, ESY2, ESY4, ESY5}  showed qualitatively that 
the many-body dynamics with asymptotically factorized initial data converges to the cubic nonlinear Schr\"odinger dynamics when $0<\beta<1$ or 
the Gross-Pitaevskii dynamics when $\beta =1$. More precisely, they proved that $\gamma^{(1)}_{N, t}$ converges to $\sv{\phi_t}
\csv{\phi_t}$ in trace norm where $\phi_t$
satisfies
\begin{align}
    i\bd_t\phi(t, x) + \lapl_x\phi(t, x) =
    \begin{cases}
    \left(\int v \right)|\phi_t|^2\phi_t & \text{ if } 0<\beta<1\\
    8\pi a|\phi_t|^2\phi_t & \text{ if } \beta =1
    \end{cases}
\end{align}
with $a$ being the scattering length corresponding to the potential $v$. Results on the rate of convergence of Fock space marginals 
can be found in \cite{KP, BDS, Kuz}.

After identifying the mean-field dynamics, it is natural to study the quantum fluctuations about it. 
A natural setting to account for the 
fluctuations is in the Bosonic Fock space
\begin{align*}
    \mathcal{F}_s(\mathfrak{h}) = \cc\oplus\bigoplus_{n\geq 1}\operatorname{Sym}\left(\mathfrak{h}^\otimes\right)
\end{align*}
where $\mathfrak{h}=L^2(\rr^3)$. Introducing $\mathcal{F}_s$ allows us to deal with states with varying number of particles, which in 
our model are the excitation and condensate elements. Recent works on the evolution of coherent states in Fock space with quantum 
fluctuation can be found in \cite{RS, GMM, GMM2, XChen, GM1, GM2,  Kuz, Kuz2, BCS, NN, Chong}.  Hence, by accounting 
for some quantum fluctuations, one is able to track the evolution of the coherent states in Fock space norm, which in effect allows 
one to obtain $L^2$-norm approximation of the evolution of many-body quantum system with factorized initial data. We refer the 
reader to \cite{Lieb, Gol, GMM3} for a complete survey of the subject.

\section{Background and Main Result}
\subsection{Notations and Earlier Results}
In this section, we provide a brief summary of the results obtained in \cite{GM1, GM3, GM4} along with relevant 
notations and rudimentary background materials on second quantization necessary to capture the effects of 
quantum fluctuations about quasifree states\footnote{Cf. Chapter 10 in \cite{Solo} for the definition of quasifree.}. The presentation is standard in
the literature. For reference, the reader may consult \cite{berazin, Solo}. 

 Let us denote the one-particle base space by $\mathfrak{h}:= L^2(\rr^3, dx )$ endowed with the inner product 
 $\inprod{\cdot}{\cdot}_{\mathfrak{h}}$ that is linear
in the second variable and conjugate linear in the first variable. We define the symmetric (Bosonic) Fock space over $\mathfrak{h}$ to be 
the closure of
\begin{align}
\mathcal{F}_s(\mathfrak{h}) = \mathcal{F}_s := \cc\oplus \bigoplus_{n= 1}^\infty \operatorname{Sym}(\mathfrak{h}^{\otimes n})
\end{align}
with respect to the norm induced by the Fock inner product
\begin{align}
\inprod{\varphi}{\psi} = \bar\varphi_0\psi_0 + \sum^\infty_{n=1} 
\inprod{\varphi_n}{\psi_n}_{\mathfrak{h}^{\otimes n}}
\end{align}
for every $\varphi=(\varphi_0, \varphi_1, \ldots), \psi=(\psi_0, \psi_1, \ldots)
 \in \mathcal{F}_s(\mathfrak{h})$. The vacuum, denoted by $\Omega$,
  is defined to be the Fock vector $(1, 0, 0, \ldots ) \in \mathcal{F}_s$.

For every field $\phi \in \mathfrak{h}$, we can define the associated
 creation and annihilation operators on
 $\mathcal{F}_s$, denoted respectively by
 $a^\ast(\phi)$ and $a(\bar\phi)$, as follow
 \begin{subequations}
\begin{align}
(a^\ast(\phi)\psi)_n(x_1, \ldots, x_n) :=&  \frac{1}{\sqrt{n}} \sum^n_{j=1} \phi(x_j)
 \psi_{n-1}(x_1, \ldots, \hat x_j, \ldots, x_n)\\
(a(\bar\phi)\psi)_n(x_1, \ldots, x_n):=& \sqrt{n+1} \int dx\ \bar\phi(x) 
\psi_{n+1}(x, x_1, \ldots, x_n)
\end{align}
\end{subequations}
on each sector with the property that $a(\phi)\Omega = 0$. In particular, we could define the corresponding creation 
and annihilation distribution-valued 
operators denote by $a^\ast_x$ and $a_x$ as
follow
\begin{subequations}
\begin{align}
(a^\ast_x\psi)_n :=&\ \frac{1}{\sqrt{n}} \sum^n_{j=1} \delta(x-x_j) \psi_{n-1}(x_1, \ldots, \hat x_j, \ldots, x_n),\\
(a_x\psi)_n :=&\ \sqrt{n+1}\psi_{n+1}(x, x_1, \ldots, x_n).
\end{align}
\end{subequations}
Hence, we have the relations
\begin{align*}
a^\ast(\phi) = \int dx\ \{\phi(x) a^\ast_x\} \ \ \text{ and } \ \ a(\bar\phi) = \int dx\ \{\bar\phi(x) a_x\}.
\end{align*}
Let us note that the creation and annihilation operators $a(\bar\phi)$ and $a^\ast(\phi)$
 associated with the field $\phi$ are unbounded, densely defined,
closed operators, which are dual to one another. Moreover, one could formally verify the pair $(a^\ast_x, a_x)$ satisfies the canonical commutation relation 
(CCR):
$[a_x, a_y^\ast] = \delta(x-y)$, $[a_x, a_y] = [a_x^\ast, a_y^\ast] = 0$, and 
the number operator defined by
\begin{align}
 \mathcal{N}:= \int dx\ a_x^\ast a_x
\end{align}
is a diagonal operator on $\mathcal{F}_s$ that counts the number of particles in each sector; for instance, $(\mathcal{N}\psi)_n = n\psi_n$.

 For each $\phi \in \mathfrak{h}$, we associate a skew-Hermitian operator
\begin{align}
 \mathcal{A}(\phi):= a(\bar\phi) - a^\ast(\phi)
\end{align}
and define  the corresponding Weyl operator to be the unitary operator $e^{-\sqrt{N}\mathcal{A}(\phi)}$.
Then the coherent state associated with $\phi$
is given by 
\begin{align}
 \psi(\phi):=e^{-\sqrt{N}\mathcal{A}(\phi)}\Omega.
\end{align}
Using the Baker-Campbell Hausdorff formula, one can show 
\begin{align*}
\psi(\phi) = \left(\ldots, c_n \phi^{\otimes n}, \ldots\right) \ \ 
\text{ with } \ \ c_n = \left( e^{-N\llp{\phi}^2_\mathfrak{h}}N^n/n!\right).
\end{align*}
In this context, $\phi$ is called the condensate wave function and the Fock space marginal associated with
the approximation scheme $e^{-\sqrt{N}\mathcal{A}(\phi_t)}\Omega$ where $\phi_t =\phi(t)$ satisfies the cubic NLS offers
a first-order (mean-field) approximation to exact evolution of the coherent state in trace norm; see \cite{RS}.  
However, to track  the exact dynamics in Fock space, a first-order approximation is not sufficient. 
We need to introduce the pair excitation 
function $k(x, y)=k(y, x)$ and its corresponding quadratic operator $\mathcal{B}(k)$, called the squeezed operator, given by
\begin{align}
\mathcal{B}(k) := \int dx d y\ \{\bar k(x, y)a_x a_y -
k(x, y)a^\ast_x a^\ast_y\}.
\end{align}
From the pair excitation function, we concoct a new approximation scheme, a second-order correction\footnote{In the mathematical 
physics literature, $e^{\mathcal{B}}$ is called the infinite-dimensional Segal-Shale-Weil representation 
of the double cover of the group of symplectic matrices of integral operators. The elements of the 
corresponding $C^\ast$-algebra are called 
Bogoliubov transformations (cf. Chapter 4 of \cite{Folland} and Chapter 11 of \cite{DezGer}).  } 
to the mean-field approximation,   given by
\begin{align}\label{approx}
\psi_{\text{approx}} = e^{i\chi (t)}e^{-\sqrt{N}\mathcal{A}(\phi_t)}e^{-\mathcal{B}(k_t)}\Omega 
\end{align}
where $\chi(t)$ is some phase factor to be determined. With an appropriate choice of evolution
equations for $\phi$ and $k$, 
we will see that 
\eqref{approx} will indeed allows us to track the exact dynamics of the evolution of quasifree states 
in Fock space. Hence we also refer to \eqref{approx} as the quasifree approximation. 

For a fixed $N \in \nn$, we are interested in the time evolution generated by
the Fock Hamiltonian operator\footnote{We adopted the convention that $\mathcal{H} \leq 0$, a non-positive operator, which could make some of
the signs in the paper not conventional when compare to the mathematical physics literature.} associated 
with $N$ on the Fock space, which is a diagonal operator denoted by $\mathcal{H}_N$,
defined by its action on Fock vectors of the form $\psi = (0, \ldots, 0, \Psi_n(x_1, \ldots, x_n), 0, \ldots)$
\begin{align}
 (\mathcal{H}_N\psi)_n = \left(\sum^n_{j=1}\lapl_{x_j}
-\frac{1}{N}\sum_{i<j}^nv_N(x_i-x_j)\right)\Psi_n=:H_{N, n}\Psi_n.
\end{align}
For convenience, we drop the $N$ subscript in $\mathcal{H}_N$ since it should be clear from the context. 
Rewrite $\mathcal{H}$ using creation and annihilation operators yields
\begin{subequations}
\begin{align}
 \mathcal{H} :=&\ \mathcal{H}_0-N^{-1}\mathcal{V} \ \ \text{ where }\\
  \mathcal{H}_0:=&\ \int  dxdy\ \{\lapl_x \delta(x-y) a^\ast_x a_y\}  \ \ \text{ and }\\
  \mathcal{V}:=&\ \frac{1}{2}\int dxdy\ \{v_N(x-y)a^\ast_x a^\ast_y a_xa_y\}.
\end{align}
\end{subequations}
In light of the Fock Hamiltonian, we are interested in studying the
solution to the following Cauchy problem
in Fock space
\begin{equation}\label{Fock-Cauchy}
\begin{aligned}
 \vect{S}_D \psi :=&\ \left(\frac{1}{i}\frac{\bd}{\bd t} - \mathcal{H}\right)\psi=0 \ \ \text{ with }\\
 \psi(0) =&\ \psi_0=\ e^{-\sqrt{N}
\mathcal{A}(\phi_0)}e^{-\mathcal{B}(k_0)}\Omega
\end{aligned}
\end{equation}
which we shall formally write as 
\begin{align}\label{exact}
 \psi_{\text{exact}}(t)= e^{it\mathcal{H}}e^{-\sqrt{N}\mathcal{A}(\phi_0)}e^{-\mathcal{B}(k_0)}\Omega.
\end{align}
Here $\psi_0$ is usually referred to as a pure squeezed state, or more generally, a quasifree state. 

Let $\mathcal{M} = e^{-\sqrt{N}\mathcal{A}}e^{-\mathcal{B}}$. 
Following \cite{GM1, GM3}, we work with the reduced dynamics, which is also called the fluctuation dynamics. 
More specifically, since $\mathcal{M}$ is a unitary operator then the error of the quasifree approximation
can be rewritten in terms of the reduced dynamics, that is, 
\begin{align}
    \Lp{E}{\mathcal{F}}:=&\ \Lp{\psi_{\text{exact}}(t)-\psi_{\text{approx}}(t)}{\mathcal{F}}= \Lp{\psi_{\text{red}}(t)-\Omega}{\mathcal{F}}
\end{align}
where
\begin{align}
    \psi_{\text{red}}(t) = e^{\mathcal{B}(t)}e^{\sqrt{N}\mathcal{A}(t)}
    e^{it\mathcal{H}} e^{-\sqrt{N}\mathcal{A}(\phi_0)}e^{-\mathcal{B}(k_0)}\Omega
\end{align}
is called the reduced Fock vector. Therefore, to study the error, it suffices to study the evolution of the reduced dynamics. 

By direct computation, one can show that the evolution of $\psi_{\text{red}}$ is determined by the equation
\begin{align}\label{reduced-ham}
    \frac{1}{i}\frac{\bd}{\bd t} \psi_{\text{red}}  = \mathcal{H}_{\text{red}}\psi_{\text{red}}
    \ \ \text{ where } \ \ \mathcal{H}_{\text{red}} = \frac{1}{i}(\bd_t \mathcal{M}^\ast)\mathcal{M}
    + \mathcal{M}^\ast\mathcal{H}\mathcal{M}.
\end{align}
 $\mathcal{H}_\text{red}$ is called the reduced Hamiltonian. The reader should note that $\mathcal{H}_\text{red}$
is a fourth degree polynomial in $a$ and $a^\ast$. Then we see that the error $E$ satisfies
the inhomogeneous equation\footnote{Here we made a slight abused of notation by identifying $E$ with both $\psi_\text{exact}-\psi_\text{approx}$ and $\psi_\text{red}-\Omega$. 
However, since our analysis only involves studying $\psi_\text{red}-\Omega$, the reader can safely assume $E=\psi_\text{red}-\Omega$
for the remainder of the paper.}
\begin{align}
  \left( \frac{1}{i}\frac{\bd}{\bd t}-\mathcal{H}_{\text{red}}\right)E = \mathcal{H}_{\text{red}}\Omega 
\end{align}
where
\begin{align}\label{forcing}
    \mathcal{H}_{\text{red}}\Omega = (X_0, X_1, X_2, X_3, X_4, 0, 0, \ldots).
\end{align}

Thus, to estimate the error and show that it is small, for large $N$, we need to be able to control the 
forcing term $\mathcal{H}_{\text{red}}\Omega$ uniformly in $N$.
However, it has been shown in \cite{GM1, GM2} that $X_0, X_1$ and $X_2$ are heuristically large since they are proportional to 
$N, N^{\frac{1}{2}}$ and constant, respectively.  On the other hand, $X_3$ and $X_4$ are proportional to $N^{-\frac{1}{2}}$ 
and $N^{-1}$, respectively, making them heuristically small when $N$ is sufficiently large. 

Fortunately, $X_0$ does not pose any serious problem. In fact, by simply modifying our quasifree approximation by
a phase factor is sufficient to get rid of $X_0$ in the forcing term \eqref{forcing}, that is, if we consider the modified approximation
\begin{align}
 \tilde \psi_\text{red}(t) = e^{-i\int^t_0 X_0(s)\ ds}\psi_\text{red}(t)
\end{align}
then we see that the modified error $\tilde E$ satisfies the equation
\begin{align}\label{Fock-error-eq}
\vect{S}_F \tilde E :=  \left( \frac{1}{i}\frac{\bd}{\bd t}-\mathcal{H}_{\text{red}}+X_0\right)(\tilde\psi_\text{red}-\Omega)
= \mathcal{H}_\text{red}\Omega - X_0\Omega.
\end{align}
The explicit form of $X_0$, also denoted by $X_0:=-N\mu$, is given in Proposition 6.1 of \cite{GM1}. Next, since we have the two degrees of freedom $\phi$ and $k$ from our approximation scheme, then we can handle $X_1$ and $X_2$
by simply imposing the natural conditions that $\phi_t$ and $k_t$ must satisfy the abstract equations 
\begin{align}\label{abstract-eq}
X_1 = 0 \ \ \text{ and } \ \ X_2 = 0.
\end{align}
These equations were first explicitly computed in Theorem 7.1 of \cite{GM1} (cf. appendix C of \cite{BBCFS}).   

To compactly write down the evolution equations for $\phi_t$ and $k_t$, we need to introduce some notations.

It turns out that the equations for $k$ is most appropriately stated in terms of the following set of fields
\begin{subequations}
\begin{align}
 \sh(k):=&\ k + \frac{1}{3!}k\circ \bar k \circ k + \frac{1}{5!} k\circ \bar k \circ k \circ \bar k \circ k +\ldots \\
 \ch(k):=&\ \delta + \frac{1}{2!}\bar k \circ k+\frac{1}{4!}\bar k \circ k \circ \bar k \circ k+\ldots
\end{align}
\end{subequations}
where $\circ$ indicates composition, that is, the kernel of $k\circ l$ is given by
\begin{align}
 (k\circ l)(x, y) = \int dz\ k(x, z) l(z, y).  
\end{align}
Adopting the convention of \cite{GM1}, we also use the notation $u = \sh(k)$ and $c = \ch(k) = \delta +p$.

Moreover, we define the operator kernels
\begin{subequations}\label{def-k-eqs}
\begin{align}
 \Lambda(t, x, y) :=&\ \phi(t, x)\phi(t, y)+ \frac{1}{2N}\sh(2k)(t, x, y)\label{def-lambda}\\
 \Gamma(t, x, y) :=&\ \bar\phi(t, x)\phi(t, y) + \frac{1}{N}
 \left(\overline{\sh(k)} \circ \sh(k)\right)(t, x, y)\label{def-gamma}
\end{align}
\end{subequations}
which will play paramount roles in our subsequent discussion. To reconcile with some of the notations
in the literature, in particular the notations of \cite{BBCFS}, we also adopt the notation 
$\sigma :=u\circ c =\frac{1}{2}\sh(2k)$ and $\gamma := u\circ \bar u = \frac{1}{2}(\overline{\ch(2k)}-\delta)$.
It is convenient to consider the
trace density\footnote{When an integral operator $F$ is 
positive-definite, such as $\Gamma$, $\diag F$, denoted the restriction of $F$ to the diagonal, is the trace density of $F$, that is $\tr F = \int F(x, x)\ dx$.
In the physics literature, $N\diag\Gamma(x)$ is called the total-number density  and is often denoted by $n(x)$. Here we adopt the 
standard notation,
$n_c$ and $\tilde n$ denote the condensate  density and the non-condensate  density, respectively, i.e. $n(x) = n_c(x)+\tilde n(x)$.} of $\Gamma$
\begin{equation}
\begin{aligned}
\diag\Gamma(t, x) =&\ \frac{1}{N}[n_c(t, x) + \tilde n(t, x)]:=  |\phi(t, x)|^2+ 
 \frac{1}{N}\left(\bar u \circ u\right)(t, x, x) 
\end{aligned}
\end{equation}
and define the operator kernel
\begin{subequations}
 \begin{align}
 \alpha(t, x, y) :=&\ (v_N\ast\diag\Gamma)(t, x)\delta(x-y)+v_N(x-y)\Gamma(t, x, y)\\
 =&\ (v_N\ast |\phi|^2)(t, x)\delta(x-y) + v_N(x-y)\bar\phi(t, x)\phi(t, y) \nonumber\\
 &\ +\frac{1}{N}\Big\{ (v_N\ast \tilde n)(t, x)\delta(x-y)+v_N(x-y)(\bar u\circ u)(t, x, y)\Big\}\nonumber\\
 =:&\ \alpha_c(t, x, y)+N^{-1}\tilde \alpha(t, x, y) 
\end{align}
\end{subequations}
and the multiplication
\begin{align}
 (v_NF)(t, x, y) := v_N(x-y)F(t, x, y).
\end{align}
Let us also define the following Schr\"odinger-type operators 
\begin{align}\label{schrodinger-op}
 \vect{S}_\pm = \frac{1}{i}\frac{\bd}{\bd t}-\lapl_x-\lapl_y \ \ \text{ and } \ \  \vect{S} = \frac{1}{i}\frac{\bd}{\bd t}-\lapl
\end{align}
where the latter is defined for either 6+1  or 3+1 dimensions, which will be clear from the context, and their corresponding operators
with ``potentials'': for any function $s$ symmetric in $(x, y)$, i.e. $s(x, y) = s(y, x)$, and any function $p$ conjugate symmetric in $(x, y)$,
i.e. $\bar p(y, x) = p(x, y)$ or $\bar p^T = p$ where $p^T(x, y) = p(y, x)$ denotes the transpose operator, we define 
\begin{subequations}
\begin{align}
 \vect{\tilde S}(s) :=&\ \vect{S}(s)+[\alpha^T, s]_+\\
 \vect{\tilde W}(p) :=&\ \vect{S}_\pm(p)+[\alpha^T, p]
\end{align}
where $[A, B]:=A\circ B-B^\ast\circ A^\ast$ is the (skew-Hermitian) commutator and $[A, B]_+:=A\circ B+B^T\circ A^T$ is the symmetrization
of the two operators $A$ and $B$. 
\end{subequations}

Then we have the following theorem.
 \begin{theorem}[Theorem 7.1 of \cite{GM1}]
  The equation $X_1 =0$ is equivalent to
  \begin{subequations}\label{HFB1}
  \begin{equation}
  \begin{aligned}\label{HFB1-phi}
\vect{S}\phi_t(x_1)=&\ -\int dy\ \left\{(v_N\Lambda_t)(x_1, y)\bar\phi_t(y)+\frac{1}{N}\tilde\alpha^T (t, x_1, y)\phi_t(y) \right\}. 
  \end{aligned} 
  \end{equation}
  The equation $X_2=0$ is equivalent to the pair of equations
\begin{align}
\vect{\tilde S}(\sh(2k_t)) + [v_N\Lambda_t, \ch(2k_t)]_+=&\ 0, \label{compact-sh-eq}\\
\vect{\tilde W}(\overline{\ch(2k_t)}) + [v_N\Lambda_t, \overline{\sh(2k_t)}] =&\ 0 \label{compact-ch-eq}.
\end{align}
\end{subequations}
 \end{theorem}

 By imposing the system of equations \eqref{HFB1}, we have reduced \eqref{Fock-error-eq} to
 \begin{align}
  \vect{S}_F\tilde E = (0, 0, 0, X_3, X_4, 0,\ldots) 
 \end{align}
which we could apply energy method (or Strichartz) to estimate $\tilde E$ in Fock space.  If we write out $X_3$ and $X_4$ explicitly, 
up to symmetrization, normalization and some lower-order terms, 
we see that
\begin{subequations}
\begin{align}
 &X_3(t, x_1, x_2, x_3) \\
 &\sim  \int \frac{dy_1dy_2 }{\sqrt{N}}\ \overline{\ch(k_t)}(x_1, y_1)\ch(k_t)(y_2, x_2)v_N(y_1-y_2)\phi_t(y_2)\sh(k_t)(x_3, y_1) \nonumber
\end{align}
and
\begin{align}
 &X_4(t, x_1, x_2, x_3, x_4) \\
 &\sim \int \frac{dy_1 dy_2}{N}\ \overline{\ch(k_t)}(x_1, y_1)\ch(k_t)(y_2, x_2)v_N(y_1-y_2)\phi_t(y_2)\sh(k_t)(x_3, y_1). \nonumber
\end{align}
\end{subequations}
The complete list of the error terms can be found in Section 5 of \cite{GM2}.
Hence we would like to obtain uniform in $N$ estimates for $\sh(k)$ and $\ch(k)$, which will in term show that 
the Fock space error tends to zero in the particle limit $N\rightarrow \infty$. However, due to the singular nature of the
forcing term $v_N\Lambda$ in 
\eqref{compact-sh-eq} and \eqref{compact-ch-eq}, this poses some inconvenience when trying to estimate $\sh(2k)$ and $\ch(2k)$ uniformly
in $N$ for $\beta \in (0, 1)$. It was shown in \cite{GM3} 
that $k$ is best expressed in terms of auxiliary fields $\Gamma$ and $\Lambda$, which are functions defined
by \eqref{def-k-eqs} in terms of $k$ and $\phi$. In fact, it was shown that $\Gamma$ and $\Lambda$ can be viewed as
the ``generalized'' (Fock space) marginal density matrices for the pure quasifree state $\psi_0$ in \eqref{Fock-Cauchy}. 

\subsection{Hartree-Fock-Bogoliubov System} To write down the alternative system of nonlinear equations to \eqref{HFB1}, we define the kernel of 
the marginal density matrices $\mathcal{L}_{m, n}$ associated with the state $\psi_\text{approx}$, which
is sometimes called the $(m+n)$-point correlation function, as follows
\begin{equation}
\begin{aligned}
 &\mathcal{L}_{m, n}(t, y_1, \ldots, y_m; x_1,\ldots, x_n)\\
 &:= \frac{1}{N^{(m+n)/2}}\inprod{a_{y_1}\cdots a_{y_m}\psi_\text{approx}}{ a_{x_1}\cdots a_{x_n}\psi_\text{approx}}\\
 &= \frac{1}{N^{(m+n)/2}} \inprod{\mathcal{M}\Omega}{\mathcal{P}_{m,n}\mathcal{M}\Omega}
 \end{aligned}
\end{equation}
where $\mathcal{P}_{m, n} = a_{y_1}^\ast\cdots a_{y_m}^\ast a_{x_1}\cdots a_{x_n}$.
It is clear from the definition that $\mathcal{L}_{m, n}^\ast = \mathcal{L}_{n, m}$. It is also straightforward to write
down the evolution equations for $\mathcal{L}_{m, n}$ using \eqref{reduced-ham}, that is, 
\begin{subequations}
\begin{align}
 \frac{1}{i}\frac{\bd}{\bd t}\left(\mathcal{M}^\ast\mathcal{P}\mathcal{M} \right)
 = [\mathcal{H}_\text{red}, \mathcal{M}^\ast\mathcal{P}\mathcal{M}]+\mathcal{M}^\ast [\mathcal{P}, \mathcal{H}]\mathcal{M}
\end{align}
which means
\begin{align}\label{gen-marginal-evol}
 \frac{1}{i}\frac{\bd}{\bd t}\mathcal{L} = \frac{1}{N^{(m+n)/2}}
 \left(\inprod{\Omega}{[\mathcal{H}_\text{red}, \mathcal{M}^\ast\mathcal{P}\mathcal{M}]\Omega}
 +\inprod{\Omega}{\mathcal{M}^\ast[\mathcal{P}, \mathcal{H}]\mathcal{M}\Omega} \right).
\end{align}
\end{subequations}
Notice the set of  equations \eqref{gen-marginal-evol} for different pairs $(m, n)$ forms an 
infinite linear hierarchy of coupled equatons where the evolution of each point correlation function
depends on other higher-order point correlation functions. In the literature, \eqref{gen-marginal-evol} is 
refer to as the von Neuman-Landau equation. 

Since $\psi_\text{approx}$ is 
a quasifree state, then it can be shown that any $(n+m)$-point correlation functions for $n+m\geq 3$ can be expressed
in terms of the one-particle or two-particle Fock marginal densities, $\mathcal{L}_{0, 1}, \mathcal{L}_{1, 1}$ and $\mathcal{L}_{0, 2}$.    
Hence it suffices for us to study the evolution equations for $\mathcal{L}_{0, 1}, \mathcal{L}_{1, 1}$ 
and $\mathcal{L}_{0, 2}$. In fact, it was shown in \cite{GM3} that $\mathcal{L}_{0, 1} = \phi, \mathcal{L}_{1, 1} = \Gamma, \mathcal{L}_{0, 2} = \Lambda$ 
 and that their evolution equations form a closed system of nonlinear coupled PDEs given by
\begin{subequations}\label{TDHFB}
 \begin{alignat}{2}
&\left\{\frac{1}{i}\frac{\bd}{\bd t}-\lapl_{x_1}\right\}\phi(x_1) =&&\ -\int dy\ \{v_N(x_1-y)\Gamma(y, y)\} \phi(x_1) \label{phi-eq}\\
  &\quad        &&\ - \int dy\ \{v_N(x_1-y)(\Gamma(y, x_1)-\bar\phi(y)\phi(x_1))\phi(y)\} \nonumber \\
  &\quad         &&\ - \int dy\ \{v_N(x_1-y)(\Lambda(x_1, y)-\phi(y)\phi(x_1))\bar\phi(y)\} \nonumber
  \end{alignat}
  \begin{align}
&\left\{ \frac{1}{i}\frac{\bd}{\bd t}-\lapl_{x_1}+\lapl_{x_2}\right\} \overline\Gamma(x_1, x_2) \label{gamma-eq} \\
 & =  - \int dy\ \{(v_N(x_1-y)-v_N(x_2-y))\Lambda(x_1, y)\overline\Lambda(y, x_2)\}\nonumber\\
&\quad -\int dy\ \{(v_N(x_1-y)-v_N(x_2-y))(\overline\Gamma(x_1, y)\overline\Gamma(y, x_2)+\overline\Gamma(y, y)\overline\Gamma(x_1, x_2))\}\nonumber\\
&\quad  + 2\int dy\ \{(v_N(x_1-y)-v_N(x_2-y))|\phi(y)|^2\phi(x_1)\bar\phi(x_2)\}\nonumber
\end{align}
\begin{align}
&\bigg\{ \frac{1}{i}\frac{\bd}{\bd t}-\lapl_{x_1}-\lapl_{x_2}+\frac{1}{N}v_N (x_1-x_2)\bigg\} \Lambda (x_1, x_2) \label{Lambda-eq}\\
&= - \int dy\ \{(v_N(x_1-y)+v_N(x_2-y))\Gamma(y, y)\Lambda(x_1, x_2)\}\nonumber\\
&\quad -\int dy\ \{(v_N(x_1-y)+v_N(x_2-y))(\Lambda(x_1, y)\Gamma(y, x_2)+\overline\Gamma(x_1, y)\Lambda(y, x_2))\}\nonumber\\
&\quad + 2\int dy\ \{(v_N(x_1-y)+v_N(x_2-y))|\phi(y)|^2\phi(x_1)\phi(x_2)\}.\nonumber
\end{align}
\end{subequations}
Notice we have suppressed the time dependence to compactify the notation. 
We refer to \eqref{TDHFB} as the time-dependent Hartree-Fock-Bogoliubov (TDHFB) system. 

\begin{remark}
 In the literature, the name TDHFB equations typically refers to the coupled system \eqref{HFB1} where 
 the pair interaction is given by the contact interaction potential $v(x) = g_0\delta(x)$ for some coupling constant $g_0\geq 0$ (In the 
 context of diluted gases, one has $g_0 = 4\pi a$ where $a$ is some scattering length).
Furthermore, it is standard to express the equation for $\phi$ in terms of $\Phi := \sqrt{N}\phi$ then \eqref{HFB1-phi} comes
\begin{subequations}
\begin{align}
 \vect{S}\Phi(x_1) = -g_0\left[n_c(x_1)+2\tilde n(x_1)\right]\Phi(x_1)-g_0\tilde m(x_1)\overline\Phi(x_1) 
\end{align}
where $\tilde m(x_1) :=\sigma(x_1, x_1)= N\Lambda(x_1, x_1)-|\Phi(x_1)|^2$. The other two equations 
\eqref{compact-sh-eq} and \eqref{compact-ch-eq} can be neatly rewritten in terms of a matrix equaton of $\gamma$ and $\sigma$
as follows
\begin{align}
 \frac{1}{i}\frac{\bd}{\bd t}R  = [R, \Im^\ast]
\end{align}
\end{subequations}
where $R$ is the $2\times 2$ matrix of operators with matrix kernel
\begin{subequations}
\begin{align}
 R(x, y) = 
 \begin{pmatrix}
  \gamma(x, y) & \sigma(x, y)\\
  \bar\sigma(x, y) & \delta(x-y) +  \bar\gamma(x, y)
 \end{pmatrix}
\end{align}
and $\Im$ has the matrix kernel
\begin{equation}
\begin{aligned}
\Im(x, y) =&\
\begin{pmatrix}
 1 & 0\\
 0 & -1
\end{pmatrix}
 \begin{pmatrix}
 G(x, y) & M(x, y)\\
 \bar M(x, y) & \bar G(x, y)
 \end{pmatrix} 
\end{aligned}
\end{equation}
\end{subequations}
with
\begin{subequations}
\begin{align}
 G(x, y) :=&\ -\lapl_x\delta(x-y) +2g_0[n_c(x)+\tilde n(x)]\delta(x-y)\\
 M(x, y) :=&\   g_0[\Phi(x)^2+\tilde m(x)]\delta(x-y). 
\end{align}
\end{subequations}
Moreover, the physical interpretation of $(\Phi, \gamma, \sigma)$ (or equivalently, $(\phi, \Gamma, \Lambda)$) is
as follows: The one-particle wave function $\Phi$ is called the condensate wave function which describes the behavior
of the Bose-Einstein condensate (BEC). On the other hand, $\gamma$ and $\sigma$  describe the dynamics of sound waves, or quasiparticles,  
in the quasifree approximation. In particular, $\tilde n(x)  = \gamma(x, x)$ determines the density of the ``thermal cloud''
of atoms, i.e. the excitation density of the Bose gas. In the physics literature, $\tilde n(x) = \gamma(x, x)$
is called the non-condensate density and $\tilde m(x) = \sigma(x, x)$
is called the anomalous density. 
\end{remark}

\begin{remark}
 It has been shown in \cite{BBCFS} that the evolution equations for the higher Fock marginal densities can 
 be written in terms of an effective nonlinear quadratic Hamiltonian parametrized by $(\phi,\Gamma, \Lambda)$
 called the Hartree-Fock-Bogoliubov (HFB) Hamiltonian, denoted by $\mathcal{H}_\text{HFB}$; more precisely, we have
 the evolution equation
 \begin{align}\label{higher-evol}
  \frac{1}{i}\frac{\bd}{\bd t}\mathcal{L}_{m, n} = 
  \inprod{\mathcal{M}\Omega}{[\mathcal{H}_\text{HFB}, \mathcal{P}_{m, n}]\mathcal{M}\Omega}.
 \end{align}
The HFB Hamiltonian is given explicitly by
\begin{align}
 \mathcal{H}_\text{HFB} = -N\mu + \sqrt{N}\mathcal{H}_1 + \mathcal{H}_2
\end{align}
with
\begin{subequations}
\begin{align}
 \mathcal{H}_1 =&\ 2 \int dx\ \{(v_N\ast |\phi|^2)\phi(x)a^\ast_x+ (v_N\ast |\phi|^2)\bar\phi(x)a_x\} \\
 \mathcal{H}_2 =&\ -\frac{1}{2}\int dxdy\ \{2g^T(x, y)a_x^\ast a_y + m(x, y)a_x^\ast a_y^\ast +\bar m(x, y) a_xa_y\}
\end{align}
\end{subequations}
where
\begin{subequations}
 \begin{align}
  g(x, y) :=&\ -\lapl_x\delta(x-y) + (v_N\ast\diag\Gamma)(x)\delta(x-y) +(v_N\Gamma)(x, y) \label{g-function}\\
  m(x, y) :=&\ (v_N\Lambda)(x, y) = v_N(x-y)\Lambda(x, y).
 \end{align}
\end{subequations}
Also note, by \eqref{gen-marginal-evol} and \eqref{higher-evol}, we have the relation
\begin{align}
 \mathcal{H}+\mathcal{H}_\text{HFB}=\mathcal{M}\mathcal{H}_\text{red}\mathcal{M}^\ast.
\end{align}
\end{remark}

Since we are interested in studying the uniform in $N$ error bound of the quasifree approximation, it is then 
instructive to take the formal large particle limit, $N\rightarrow \infty$, to obtain the following equations
\begin{subequations} \label{limit-eqs}
 \begin{align}
\vect{S}\phi(x_1) =&\ -2g_0\{\Gamma(x_1, x_1)-|\phi(x_1)|^2\}\phi(x_1)-g_0\Lambda(x_1, x_1)\bar\phi(x_1)
  \end{align}
  \begin{equation}
  \begin{aligned}
   \vect{S}_\pm \overline\Gamma(x_1, x_2) =& -g_0\{\Lambda(x_1, x_1)\overline\Lambda(x_1, x_2)
   -\Lambda(x_1, x_2)\overline\Lambda(x_2, x_2)\}\\
   & - 2g_0\{\overline\Gamma(x_1, x_1)-\overline\Gamma(x_2, x_2)\}\overline\Gamma(x_1, x_2)\\
   & + 2g_0\{|\phi(x_1)|^2-|\phi(x_2)|^2\}\phi(x_1)\bar\phi(x_2)
  \end{aligned}
\end{equation}
  \begin{equation}
  \begin{aligned}
   \vect{S} \Lambda(x_1, x_2) =& - g_0\{\Lambda(x_1, x_1)\Gamma(x_1, x_2)+\Lambda(x_2, x_2)\overline\Gamma(x_1, x_2)\}\\
   & -2g_0\{\Gamma(x_1, x_1)
   +\Gamma(x_2, x_2)\}\Lambda(x_1, x_2)\\
   & + 2g_0\{|\phi(x_1)|^2+|\phi(x_2)|^2\} \phi(x_1)\phi(x_2)
  \end{aligned}
\end{equation}
   \end{subequations}
   for some constant $g_0$. In many ways, system \eqref{limit-eqs} serves as a guiding light in our studies
   of the uniform in $N$ estimates for the TDHFB system. In particular, as remarked in \cite{GM4}, the solution to
  the system \eqref{limit-eqs} is invariant under the scaling $\lambda\phi(\lambda^2 t, \lambda x_1, \lambda x_2), 
   \lambda^2\Gamma(\lambda^2 t, \lambda x_1, \lambda x_2), \lambda^2\Lambda(\lambda^2 t, \lambda x_1, \lambda x_2)$,
   which makes the limiting system energy critical. However, the reader should also note that the scaling argument
   does not detect the collapse to the diagonal $x_1 = x_2$ involved in these equations. More precisely, by replacing 
   $\Lambda(x_1, x_1)\bar \Lambda(x_1, x_2)$ with $|\Lambda(x_1, x_2)|^2$, the system would still scale the same way,
   but it is well known that going from $|\Lambda(x_1, x_2)|^2$ to $|\Lambda(x_1, x_1)|^2$ 
   there is a natural loss of regularity; see for instance the standard sharp trace theorem or Lemma 5.1 in \cite{GM3} for the collapsing
   estimates of the linear $\Lambda$ and $\Gamma$ equations. 
   Thus, it is arguable that the system \eqref{limit-eqs} is actually energy-supercritical, which illustrates the 
   formidability of obtaining uniform in $N$ global well-posedness and estimates 
   for the TDHFB system \eqref{TDHFB} in energy space. Hence it seems desirable to first consider
   the well-posedness of $\Gamma$ and $\Lambda$ in $H^{1+\varepsilon}$ Sobolev spaces for $\varepsilon>0$.
   But even so, the situation is not completely obvious since the conserved energy of the system
   scales uniform in $N$ like $H^1$ for $\Lambda$ which poses the classical questions regarding 
   the growth of $H^{1+\varepsilon}$ Sobolev norm of $\Lambda$. This question regarding the growth 
   of the Sobolev norm of $\Lambda$ was recently studied by the authors in a joint work with M. Grillakis and M. Machedon in \cite{CGMZ}.

A more pressing question one might have is whether the TDHFB system is even uniform in $N$ locally well-posed. 
Indeed, the uniform in $N$ local well-posedness of the TDHFB system was proven 
in \cite{GM4} for $0<\beta<1$. Let us summarize their result as follows.

We begin by defining the space of initial data. For every $\alpha>0$, we define the space
\begin{align}
 \mathcal{X}^\alpha = \{(\phi, \Gamma, \Lambda) \in H^\alpha\times H^\alpha_\text{Herm}\times H^\alpha_\text{Sym}\}
\end{align}
with $H^\alpha$ being the Sobolev space $H^\alpha(\rr^3)$, $H^\alpha_\text{Herm}$ the Sobolev space 
$H^\alpha(\rr^3\times\rr^3)$ restricted to functions $\Gamma$ such that $\Gamma(x, y) = \overline{\Gamma(y, x)}$, and 
$H^\alpha(\rr^3\times\rr^3)$ restricted to functions $\Lambda$ such that $\Lambda(x, y) = \Lambda(y, x)$. Moreover,
$\mathcal{X}^\alpha$ is endowed with the norm
\begin{align}
 \llp{(\phi, \Gamma, \Lambda)}_{\mathcal{X}^\alpha}:=&\ \Lp{\langle\grad\rangle^\alpha\phi}{L^2(\rr^3)}+\Lp{(\langle\grad_x\rangle^2\otimes 1
 +1\otimes\langle\grad_y\rangle^2)^{\frac{\alpha}{2}}\Gamma}{L^2(\rr^3\times\rr^3)} \nonumber\\
 &\ +\Lp{(\langle\grad_x\rangle^2\otimes 1
 +1\otimes\langle\grad_y\rangle^2)^{\frac{\alpha}{2}}\Lambda}{L^2(\rr^3\times\rr^3)} 
\end{align}
where $\langle D\rangle = \sqrt{1+|D|^2}$ is the standard bracket notation.

\begin{theorem}[Grillakis $\&$ Machedon '18]\label{GM-main1}
 Assume $v$ is a Schwartz function with $\hat v$ supported in the unit ball, such that $|\hat v| \leq \hat w$ with
 $w$ a Schwartz function. Let $0<\beta<1$. Fix $\alpha>\frac{1}{2}$ so that $2\alpha\beta<1$ and choose $R>0$.
 Then there exists $T=T(\beta, R)$, independent of $N$, and a corresponding spacetime function space $\mathcal{X}_T$, 
 depending only on $T$ and $\alpha$, such that for any given initial data
 \begin{align*}
  (\phi_0, \Gamma_0, \Lambda_0) \in \{(\phi, \Gamma, \Lambda) \in \mathcal{X}^\alpha \mid \llp{(\phi, \Gamma, \Lambda)}_{\mathcal{X}^\alpha}<R\},
 \end{align*}
 there exists a unique maximal mild solution to the time-dependent HFB system \eqref{TDHFB}, with initial data 
 $(\phi_0, \Gamma_0, \Lambda_0)$, satisfying 
 \begin{align}
 (\phi_t, \Gamma_t, \Lambda_t) \in C([0, T]\rightarrow \mathcal{X}^\alpha)\cap \mathcal{X}_T.
 \end{align}
\end{theorem}

Theorem \ref{GM-main1} is an improvement of their previous local well-posedness result in \cite{GM3} for the case $0<\beta<\frac{2}{3}$.
The analysis of the case $0<\beta<1$ is substantially more involved than the situation in \cite{GM3}. However,
the result of \cite{GM4} does not cover the Fock space approximation which was addressed in \cite{GM3}, at least for small time. Let us also
summarize their earlier findings.
\begin{theorem}[Grillakis $\&$ Machedon '17]\label{GM-main2}
 Assume $v$ as in Theorem \ref{GM-main1} and $0<\beta<\frac{2}{3}$. Choose $\alpha>\frac{1}{2}$ such that $2\alpha\beta<1$. Then \eqref{TDHFB}
 is uniform in $N$ locally well-posed in $\mathcal{X}^\alpha$. Moreover, suppose $(\phi_t, \Gamma_t, \Lambda_t)$ is
 a solution to the HFB system with some smooth initial conditions $(\phi_0, \Gamma_0, \Lambda_0)$ satisfying 
 the following regularity condition uniformly in $N$: for  $0\leq i \leq 1, 0\leq j\leq 2$
\begin{equation}\label{initial-data}
 \begin{aligned}
&\ \Lp{\langle \grad_x\rangle^\alpha\bd_t^i \grad_{x}^{j}\phi(t, \cdot)\big|_{t=0}}{L^2(dx)} \lesssim 1\\
&\ \Lp{\langle \grad_x\rangle^\alpha\langle \grad_y\rangle^\alpha\bd_t^i \grad_{x+y}^{j}\Gamma(t, \cdot)\big|_{t=0}}{L^2(dxdy)}\lesssim 1\\
&\  \Lp{\langle \grad_x\rangle^\alpha\langle \grad_y\rangle^\alpha\bd_t^i \grad_{x+y}^{j}\Lambda(t, \cdot)\big|_{t=0}}{L^2(dxdy)} \lesssim 1  \\
&\ \Lp{\grad^j_{x+y}\sh(2k_0)(x, y)}{L^2(dxdy)} \lesssim 1.
\end{aligned}
\end{equation}
Then there exist $T_0>0\ (T_0\sim 1)$ independent of $N$, a phase function $\chi(t)$, depending on $N$, and some constant $C=C(T_0, \alpha, \beta)$ such that we have the Fock space estimate
\begin{align}
    \Lp{e^{it\mathcal{H}}e^{-\sqrt{N}\mathcal{A}(\phi_0)}e^{-\mathcal{B}(k_0)}\Omega-e^{i\chi(t)}e^{-\sqrt{N}(\phi_t)}e^{-\mathcal{B}(k_t)}\Omega}{\mathcal{F}} \leq \frac{C}{N^{\frac{1}{6}}}
\end{align}
for all $0 \leq t \leq T_0$. 
\end{theorem}

\begin{remark}
 A crucial point to note in Theorem \ref{GM-main2} is the dependence on the
 time derivative on the initial data \eqref{initial-data}. As pointed out in Remark 2.6 of \cite{GM3}, the
 assumption of time derivative dependence of the initial data rules out the case of coherent states, i.e. $k(0, x,y) = 0$. 
\end{remark}

\subsection{Main Results}  Since the well-posedness theory of 
\eqref{TDHFB} is stated in Strichartz-type space, let us briefly review some basic notations. The 
spacetime function space endowed with the mixed norm 
\begin{align*}
 \llp{F}_{L^q(dt)L^r(dx)L^s(dy)}:=\left(\int^\infty_{-\infty} dt\ \left(\int_{\rr^d} dx\ \llp{F(t, x,\cdot)}_{L^s(\rr^d)}^r \right)^{\frac{q}{r}} \right)^{\frac{1}{q}}
\end{align*}
where the triplet $(q, r, s)$ satisfies the admissible condition $\frac{2}{q}+\frac{d}{r}+\frac{d}{s}= d$
 for $2\leq q, r, s \leq \infty$ is called a Strichartz space.

 Fix $T>0$ and let $c(t)$ be the characteristic function\footnote{The reader should be warned that 
 $c$ has already been used to denote $\ch(k)$, but we seldom use it. However, it should be clear from the context
 which function we are referring to. Nevertheless, we will make explicit the distinction when ambiguity arises.} on the interval $[0, T]$. 
 For our problem, we consider the function space $\mathcal{X}_T$ for the triplet of functions $X=(\phi, \Gamma, \Lambda)$ 
 equipped with the norm that is a sum of the following norms
 \begin{subequations}
 \begin{align}
  \vect{N}_T(\phi):=&\ \llp{\langle\grad_x\rangle^\alpha c(t)\phi}_{L^\infty(dt)L^2(dx)}+\llp{\langle\grad_x\rangle^\alpha c(t)\phi}_{L^2(dt)L^6(dx)}\\
  \vect{N}_T(\Gamma):=&\ \llp{\langle\grad_x\rangle^\alpha\langle\grad_y\rangle^\alpha c(t)\Gamma}_{L^\infty(dt)L^2(dxdy)} \\
  &\ + \llp{\langle\grad_x\rangle^\alpha\langle\grad_y\rangle^\alpha c(t)\Gamma}_{L^2(dt)L^6(dx)L^2(dy)}\nonumber \\
    &\ + \llp{\langle\grad_x\rangle^\alpha\langle\grad_y\rangle^\alpha c(t)\Gamma}_{L^2(dt)L^6(dy)L^2(dx)}\nonumber \\
    &\ + \sup_{z} \llp{\langle\grad_x\rangle^{\alpha-\frac{1}{2}}|\grad_x|^{\frac{1}{2}}c(t)\Gamma(t, x, x+z)}_{L^2(dtdx)}\nonumber
 \end{align}
and
\begin{align}
 \vect{N}_T(\Lambda):=&\ \llp{\langle\grad_x\rangle^\alpha\langle\grad_y\rangle^\alpha c(t)\Lambda}_{L^\infty(dt)L^2(dxdy)}\\
  &\ + \llp{\langle\grad_x\rangle^\alpha\langle\grad_y\rangle^\alpha c(t)\Lambda}_{L^2(dt)L^6(dx)L^2(dy)} \nonumber \\
    &\ + \llp{\langle\grad_x\rangle^\alpha\langle\grad_y\rangle^\alpha c(t)\Lambda}_{L^2(dt)L^6(dy)L^2(dx)}
    \nonumber \\
    &\ + \sup_{z} \llp{\langle\grad_x\rangle^{\alpha}c(t)\Lambda(t, x, x+z)}_{L^2(dtdx)} \nonumber \\
    &\ + \sup_{z} \llp{|\bd_t|^{\frac{1}{4}}[c(t)\Lambda(t, x, x+w)]}_{L^2(dtdx)}. \nonumber
\end{align}
\end{subequations}
Moreover, let us denote the space of triplets $X_t=(\phi_t, \Gamma_t, \Lambda_t)$ where the above norms are finite
for any $0\leq T<\infty$ by $\mathcal{X}_{\infty, \text{loc}}$. Here, we also use
\begin{align}
 \vect{N}_T(X):= \vect{N}_T(\phi)+\vect{N}_T(\Gamma)+\vect{N}_T(\Lambda).
\end{align}

Let us state the first main result of our paper. 
\begin{theorem}\label{main-theorem-1}
 Assume $v$ is a non-negative 
 Schwartz function satisfying the condition that $|\hat v| \leq  \hat w$ for some Schwartz function $w$. 
  Let $0<\beta<1$. Fix $\alpha>\frac{1}{2}$ so that $2\alpha\beta<1$ and choose $R>0$. Then for any 
  \begin{align*}
    (\phi_0, \Gamma_0, \Lambda_0) \in \{(\phi, \Gamma, \Lambda)
    \in \mathcal{X}^\alpha \mid \llp{(\phi, \Gamma, \Lambda)}_{\mathcal{X}^\alpha}<R\},
  \end{align*}
the corresponding local solution to the TDHFB system \eqref{TDHFB} given by Theorem \ref{GM-main1}
extends globally with $X_t= (\phi_t, \Gamma_t, \Lambda_t) \in C([0, \infty) 
\rightarrow \mathcal{X}^\alpha)\cap \mathcal{X}_{\infty, \text{loc}}$.  Moreover, there exist constants $C>0$, depending only on the initial data, such that the following uniform in $N$ a-priori estimate
\begin{align}
 \vect{N}_T(X) \leq CT,
\end{align}
holds for all $T>0$. 
\end{theorem}
In view of Theorem \ref{main-theorem-1}, we can obtain a global-in-time Fock space estimate for the error of 
our quasifree approximation.
\begin{theorem}\label{main-theorem-2}
 Assume $v$ as in Theorem \ref{main-theorem-1}.  Let $0<\beta<1$. Fix $\alpha>\frac{1}{2}$ so that $2\alpha\beta<1$. Let
 $\phi, k$ be solutions to \eqref{TDHFB} with smooth initial conditions $\phi_0, k_0$ satisfying
 the following regularity conditions: for $0\leq j\le 2$ 
 \begin{equation}
  \begin{aligned}
  &\llp{\langle\grad_x\rangle^\alpha\langle\grad_y\rangle^\alpha \grad_{x+y}^j\Gamma_0}_{L^2(dxdy)}+\llp{\langle\grad_x\rangle^\alpha\langle\grad_y\rangle^\alpha \grad_{x+y}^j\Lambda_0}_{L^2(dxdy)}\\
  &+ \llp{\langle\grad_x\rangle^\alpha \grad_x^j\phi_0}_{L^2(dx)} + \llp{\grad^j_{x+y}\sh(2k_0)(\cdot, \cdot)}_{L^2(dxdy)} \lesssim 1.
  \end{aligned} 
 \end{equation}
Then for some $\varepsilon'>0$, there exists a real phase function $\chi_0(t)$ depending on $N$ and constants $C=C(\beta, \varepsilon), \kappa =\kappa(\varepsilon)>0$ 
  such that
  \begin{align}
  \llp{\psi_\text{exact}(t)-\tilde\psi_\text{approx}(t)}_\mathcal{F} \leq \frac{C\exp(\kappa t)}{N^{\frac{1-\beta}{2}}}
  \end{align}
for all $t>0$. 
\end{theorem}

\begin{remark}
 Theorem \ref{main-theorem-2} should be compared with Theorem 1.1  of \cite{BCS}, which also provides a  global-in-time 
 Fock space for the error between the exact evolution and the effective evolution. 
 In fact, our exponent for $N$ is consistent with their exponent, which we believe to be sharp  for the Fock space estimate. 
 However, the nature of the two results are very different. In their work, the approximation scheme is given by
 \begin{align}\label{their-approx}
  \psi_\text{approx}(t) = e^{i\chi_0(t)}e^{-\sqrt{N}\mathcal{A}(\phi_t)} e^{-\mathcal{B}(k_N(t))} U_{2, N}(t)\Omega
 \end{align}
 where the pair excitation function $k_N(t, x, y)$ is given explicitly and $U_{2, N}(t)$ is the dynamics associated
 the a quadratic generator which depends on $k_N$. Their form of $k_N$ is inspired by the fact that 
 when $\beta=1$ the ground state of \eqref{Linear-Schrodinger} exhibits a short-range correlation structure, which means 
 the ground state cannot be a simple tensor product. However, it is also known
 that for $\beta \in (0, 1)$ the ground state 
 factorizes in the large particle limit (cf. Appendix B in \cite{ESY4}), which is not capture by \eqref{their-approx}. 
 
 In contrast,
 Theorem \ref{main-theorem-2} is much more flexible. It lets us start with an uncorrelated system  
 and study the dynamical formation of correlation structures in the system. Furthermore, another feature of our result is that by
 employing techniques from dispersive PDE theory we were able to get a better growth rate in time for the error term in comparison
 to the standard double exponential growth. 
\end{remark}

\section{Global Estimates for the Time-Dependent HFB Equations}
In this section we prove, for a sufficiently small $\varepsilon>0$, the following estimates
\begin{subequations}
\begin{align}
& \llp{\langle\grad_x\rangle^{\frac{1}{2}+\varepsilon}\langle\grad_y\rangle^{\frac{1}{2}+\varepsilon}
\Lambda(t)}_{L^2(dxdy)} \leq C \label{est1}\\
& \llp{\langle\grad_x\rangle^{\frac{1}{2}+\varepsilon}\langle\grad_y\rangle^{\frac{1}{2}+\varepsilon}
\Gamma(t)}_{L^2(dxdy)} \leq C \label{est2}\\
& \llp{\langle \grad_x\rangle^{\frac{1}{2}+\varepsilon}
\phi(t)}_{L^2(dx)} \leq C \label{est3}
\end{align}
\end{subequations}
hold uniformly in $N$ for any fixed time $t>0$. The proof of estimates (\ref{est1})-(\ref{est3}) relies on the conservation 
laws established in \cite{GM1}. For the reader's convenience, we restate the conservation laws for the
 time-dependent HFB system in the following proposition. Let us recall the total
  particle number and energy, which we denoted by $\mathcal{N}$ and
 $\mathcal{E}$  respectively, can be evaluated explicitly as follows:
 \begin{subequations}
 \begin{align}
 \mathcal{N} = N\cdot\left\{ \int dx\  |\phi(x)|^2+ \frac{1}{N}\int dxdy\ |u(x, y)|^2\right\}
 \end{align}
 and 
\begin{align}
 \mathcal{E}=&\ N\cdot\bigg\{\int dx\ |\grad\phi(x)|^2+ \frac{1}{2N}\int dxdy\ |\grad_{x,y}u(x,y)|^2  \\
&+ \frac{1}{2N}\int dxdydz\ v_N(x-y)|\phi(x)u(y, z)+\phi(y)u(x, z)|^2\nonumber\\
&+ \frac{1}{4}\int dxdy\ v_N(x-y)\left\{ 2|\Lambda(x, y)|^2+|\Gamma(x, y)|^2+\Gamma(x, x)\Gamma(y, y) \right\}\bigg\}.  \nonumber
\end{align}
\end{subequations}
For the sake of compactness of notation, we have suppressed the dependence on the time variable since it only plays 
a passive role in our studies of the equations in this section.
 \begin{prop}[Conservation Quantities]\label{conservation}
Suppose $(\phi(t), \Gamma(t), \Lambda(t))$ is a smooth solution to \eqref{TDHFB}
with $v \in L^1(\rr)\cap C^\infty(\rr)$. 
Then the total-particle number and energy for the system are conserved.
\end{prop}
\begin{remark}
The reader should be aware of the fact that we are assuming that the energy per particle is constant and independent of $N$. 
More precisely, we make the assumption that $\mathcal{N}$ and $\mathcal{E}$ are proportional to $N$ for some fixed $N$. In fact, we have that $\mathcal{N}=N$ and $\mathcal{E}\sim N$ or, equivalently, $N^{-1}\mathcal{E} \sim 1$. 
 \end{remark}

As an immediate corollary of the conservation quantities, we prove estimate (\ref{est2}) and (\ref{est3}). 
\begin{cor}\label{energy1}
Let $\phi(t)$ and $\Gamma(t)$ be smooth solutions to the time-dependent HFB equations. Then, for any $0<\varepsilon \leq \frac{1}{2}$, we have the estimates
\begin{align*}
& \llp{\langle \grad_x\rangle^{\frac{1}{2}+\varepsilon}\langle\grad_y\rangle^{\frac{1}{2}+\varepsilon}
\Gamma(t)}_{L^2(dxdy)} \lesssim 1\\
& \llp{\langle \grad_x\rangle^{\frac{1}{2}+\varepsilon}
\phi(t)}_{L^2(dx)} \lesssim 1 
\end{align*}
which hold uniformly in $N$ and independent of $t$. 
\end{cor}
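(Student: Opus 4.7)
The plan is to extract a priori Sobolev bounds on $\phi(t)$ and on the pair kernel $\sh(k)(t)$ from the conservation laws in Proposition~\ref{conservation}, and then combine them by Fourier interpolation together with the operator-theoretic factorization $\Gamma = \bar\phi\otimes \phi + N^{-1}\overline{\sh(k)}\circ\sh(k)$ recalled in the excerpt.

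From $\mathcal{N}=N$ I read off $\|\phi(t)\|_{L^2}\le 1$ and $\|\sh(k)(t)\|_{L^2(dxdy)}^2\le N$. Since $v\ge 0$, the four interaction summands in the expression for $\mathcal{E}$ are pointwise nonnegative, so dropping them in $\mathcal{E}\sim N$ yields
\begin{align*}
\|\grad \phi(t)\|_{L^2}^2 \lesssim 1, \qquad \|\grad_{x,y}\sh(k)(t)\|_{L^2(dxdy)}^2 \lesssim N,
\end{align*}
uniformly in $t$ and $N$; in particular $\|\grad_x \sh(k)\|_{L^2}$ and $\|\grad_y \sh(k)\|_{L^2}$ are both $\lesssim N^{1/2}$. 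Because $|\xi|^{1+2\varepsilon}\lesssim 1+|\xi|^2$ for $|\varepsilon|\le 1/2$, Plancherel immediately gives $\||\grad|^{1/2+\varepsilon}\phi(t)\|_{L^2}\lesssim \|\phi\|_{L^2}+\|\grad\phi\|_{L^2}\lesssim 1$, which disposes of the estimate on $\phi$.

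For $\Gamma$, the first summand contributes $\||\grad|^{1/2+\varepsilon}\phi\|_{L^2}^2\lesssim 1$ by the previous step. For the second, let $P$ denote the Hilbert-Schmidt operator on $L^2(\rr^3)$ with kernel $\sh(k)(t,\cdot,\cdot)$; since $k$, and hence $\sh(k)$, is symmetric, $P^\ast$ has kernel $\overline{\sh(k)}$, and $\grad_x^{1/2+\varepsilon}\grad_y^{1/2+\varepsilon}(\overline{\sh(k)}\circ\sh(k))$ is the integral kernel of $|\grad|^{1/2+\varepsilon} P^\ast P\, |\grad|^{1/2+\varepsilon}$. The submultiplicativity $\|AB\|_{HS}\le \|A\|_{HS}\|B\|_{HS}$ (a direct Cauchy--Schwarz in an orthonormal basis) then gives
\begin{align*}
\big\||\grad|^{1/2+\varepsilon} P^\ast P |\grad|^{1/2+\varepsilon}\big\|_{HS} \le \big\||\grad|^{1/2+\varepsilon} P^\ast\big\|_{HS} \cdot \big\|P|\grad|^{1/2+\varepsilon}\big\|_{HS},
\end{align*}
and each factor is $\lesssim N^{1/2}$ by exactly the same Plancherel interpolation as in the $\phi$-estimate, applied one variable at a time with the single-variable bounds on $\sh(k)$ derived above. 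The prefactor $N^{-1}$ then absorbs these two $N^{1/2}$'s and produces the required $O(1)$ bound.

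The only delicate point is the accounting of $N$-powers: the weaker inequality $\|AB\|_{HS}\le \|A\|_{op}\|B\|_{HS}$ would be insufficient, since one has no uniform-in-$N$ control on $\||\grad|^{1/2+\varepsilon}P^\ast\|_{op}$ from the conserved quantities alone; the fully bilinear Hilbert--Schmidt estimate is precisely what makes the $N^{-1}$ in the definition of $\Gamma$ cancel the product $N^{1/2}\cdot N^{1/2}$ from the two HS factors. Time-independence of the constants is automatic, since every input rests solely on the conserved quantities $\mathcal{N}$ and $\mathcal{E}$.
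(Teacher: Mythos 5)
Your proposal is correct and follows essentially the same route as the paper: both arguments rest on the conserved quantities $\mathcal{N}$ and $\mathcal{E}$ (dropping the nonnegative interaction terms), Hilbert--Schmidt submultiplicativity for $\overline{\sh(k)}\circ\sh(k)$, and Plancherel/interpolation to control the fractional derivatives by the $L^2$ and one-derivative bounds. The only cosmetic difference is that the paper first proves the two endpoint estimates on $\Gamma$ itself and then interpolates, whereas you distribute $|\grad|^{1/2+\varepsilon}$ onto the two Hilbert--Schmidt factors and bound each by $N^{1/2}$; the $N$-power bookkeeping is identical.
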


\begin{proof}
It suffices to prove estimate \eqref{est2} since the prove of \eqref{est3} is similar. By Proposition \ref{conservation} and Cauchy-Schwarz inequality, we obtain the 
estimate
\begin{subequations}
\begin{align}
\Lp{\Gamma(t)}{L^2(dxdy)} \leq&\ \llp{\phi_t}_{L^2(dx)}^2+N^{-1}\llp{\overline{u_t}\circ u_t}_{L^2(dxdy)}\label{1-1}\\
\leq&\ \Lp{\phi_t}{L^2(dx)}^2 + N^{-1}\Lp{u_t}{L^2(dxdy)}^2 = 1\nonumber
\end{align}
uniformly in $t$ and independent of $N$. Likewise, we see that
\begin{align}
\Lp{\grad_x\grad_y \Gamma(t)}{L^2(dxdy)} 
\leq\ \Lp{\grad_x\phi_t}{L^2}^2 + N^{-1}\Lp{\grad_xu_t}{L^2}^2 \lesssim 1 \label{1-2}.
\end{align}
\end{subequations}
Hence interpolating (\ref{1-1}) and (\ref{1-2}) yields the desired result. 
\end{proof}

In the remainder of this section, we shall prove that estimate (\ref{est1}) holds. To this end, let us begin by making the observation that proving estimate (\ref{est1}) is equivalent to establishing the estimate
\begin{align}\label{main-est}
N^{-1}\llp{\langle \grad_x\rangle^{\frac{1}{2}+\varepsilon}\langle \grad_y\rangle^{\frac{1}{2}+\varepsilon}\sh(2k_t)}_{L^2(dxdy)} \lesssim  C
\end{align}
for some sufficiently small $\varepsilon>0$. Furthermore, to aid us in proving estimate (\ref{main-est}), we apply the operator identity
\begin{align}\label{hyptrig-id}
\sh(2k) = 2u\circ c = 2u+2u\circ p
\end{align}
and the triangle inequality to obtain a preliminary estimate
\begin{align*}
&\llp{\langle \grad_x\rangle^{\frac{1}{2}+\varepsilon}\langle \grad_y\rangle^{\frac{1}{2}+\varepsilon}\sh(2k_t)}_{L^2(dxdy)} \\
&\lesssim  \llp{\langle \grad_x\rangle^{\frac{1}{2}+\varepsilon}\langle \grad_y\rangle^{\frac{1}{2}+\varepsilon}u_t}_{L^2(dxdy)}
+\llp{\langle \grad_x\rangle^{\frac{1}{2}+\varepsilon}\langle \grad_y\rangle^{\frac{1}{2}+\varepsilon}u_t\circ p_t}_{L^2(dxdy)}\\
&=: I_1(t)+I_2(t).
\end{align*}
Hence it remains to show that $N^{-1}I_i(t) \lesssim C$ for $i=1, 2$. 

To estimate $I_2(t)$, we use the following lemma
\begin{lemma}\label{est-I2}
We have the following estimates
\begin{subequations}
\begin{align}
&N^{-1}\Lp{ |\grad_x|^{\frac{1}{2}}|\grad_y|^{\frac{1}{2}} u_t\circ p_t}{L^2(dxdy)} \lesssim 1,\label{1.3.1}\\
& N^{-1}\Lp{\grad_xu_t\circ \grad_yp_t}{L^2(dxdy)} \lesssim 1 \label{1.3.2}
\end{align}
\end{subequations}
where both are independent of time $t$. In particular, by interpolating estimates (\ref{1.3.1}) and (\ref{1.3.2}), we obtain the estimate
\begin{align}
N^{-1}\Lp{ |\grad_x|^{\frac{1}{2}+\varepsilon}|\grad_y|^{\frac{1}{2}+\varepsilon} u_t\circ p_t}{L^2(dxdy)} \lesssim 1
\end{align}
for any $0 \leq \varepsilon\leq \frac{1}{2}$. 
\end{lemma}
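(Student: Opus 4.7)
The plan is to establish the two endpoint bounds (\ref{1.3.1}) and (\ref{1.3.2}) directly from the conservation laws of Proposition \ref{conservation}, after which the intermediate estimate follows by the interpolation stated in the lemma.

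Conservation of particle number and energy yield, uniformly in $t$, the Hilbert--Schmidt bounds $\llp{\sh(k_t)}_{L^2(dxdy)}\lesssim N^{1/2}$ and $\llp{\grad_{x}\sh(k_t)}_{L^2(dxdy)}+\llp{\grad_{y}\sh(k_t)}_{L^2(dxdy)}\lesssim N^{1/2}$. Interpolating between these gives $\llp{\grad_x^\sigma\sh(k_t)}_{L^2(dxdy)}\lesssim N^{1/2}$ for $\sigma\in[0,1]$, and likewise in the $y$-variable. To handle the factor $p_t=\ch(k_t)-\delta$, the plan is to invoke the Bogoliubov (unitarity) operator identity
\[
\ch(k)\circ\ch(k) = \delta + \sh(k)\circ\overline{\sh(k)},
\]
which yields the factorization $p_t = \sh(k_t)\circ\overline{\sh(k_t)}\circ(\delta+\ch(k_t))^{-1}$ together with the harmless bound $\Vert(\delta+\ch(k_t))^{-1}\Vert_{\mathrm{op}}\leq 1/2$ (since $\ch(k_t)\geq \delta$ as a positive self-adjoint operator).

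For estimate (\ref{1.3.2}), the next step is to apply the Hilbert--Schmidt/operator-norm submultiplicativity
\[
\llp{\grad_x\sh(k_t)\circ\grad_y p_t}_{L^2(dxdy)}\leq\llp{\grad_x\sh(k_t)}_{L^2(dxdy)}\cdot\Vert\grad_y p_t\Vert_{\mathrm{op}},
\]
where the first factor is $\lesssim N^{1/2}$ by energy conservation. For the second, one substitutes the Bogoliubov factorization into $\grad_y p_t$, distributes $\grad_y$ onto the rightmost $\overline{\sh(k_t)}$ factor, and bounds the result by a product of the form $\Vert\sh(k_t)\Vert_{\mathrm{op}}\cdot\llp{\grad_y\sh(k_t)}_{L^2(dxdy)}\cdot\Vert(\delta+\ch(k_t))^{-1}\Vert_{\mathrm{op}}$, which is $\lesssim N^{1/2}$ so that the product absorbs the $N^{-1}$ prefactor. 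Estimate (\ref{1.3.1}) is handled by the same template with half-derivatives and the interpolated Hilbert--Schmidt bound $\llp{\grad^{1/2}\sh(k_t)}_{L^2(dxdy)}\lesssim N^{1/2}$.

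The main obstacle is securing an operator-norm control of the form $\Vert\sh(k_t)\Vert_{\mathrm{op}}\lesssim 1$ uniformly in $N$: conservation of particle number only provides the Hilbert--Schmidt bound $\llp{\sh(k_t)}_{L^2(dxdy)}\lesssim N^{1/2}$, and using this alone in the operator-norm slot of the submultiplicative inequality would contribute a spurious extra factor of $N^{1/2}$ that spoils the estimate. This uniform operator-norm bound must therefore come from additional structural information — most naturally, by propagating the initial smallness $\llp{\sh(2k_0)}_{L^2(dxdy)}\lesssim 1$ through the TDHFB flow to obtain $\Vert k_t\Vert_{\mathrm{op}}\lesssim 1$, which in turn renders $\sh(k_t)$ and $\ch(k_t)$ bounded operators uniformly in $N$ and closes the scheme above.
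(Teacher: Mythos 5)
Your proposal hinges on an ingredient you never establish and that the lemma does not need: a uniform-in-$N$, uniform-in-$t$ operator-norm bound $\Vert\sh(k_t)\Vert_{\mathrm{op}}\lesssim 1$. You correctly note that the conservation laws only give the Hilbert--Schmidt bounds $\llp{\sh(k_t)}_{L^2}\lesssim N^{1/2}$ and $\llp{\grad\sh(k_t)}_{L^2}\lesssim N^{1/2}$, and you defer the operator-norm control to ``propagating the initial smallness of $\sh(2k_0)$ through the TDHFB flow.'' But that propagation is precisely the hard global-in-time content of Sections 3--5 of the paper, and even there the bounds obtained grow in time (indeed exponentially for the higher norms); nothing in the paper, and nothing in your sketch, produces a time-independent $\Vert\sh(k_t)\Vert_{\mathrm{op}}\lesssim 1$. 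Since this lemma is itself one of the inputs used to start that propagation, invoking the propagated bounds here would also be circular. As written, your argument for both (\ref{1.3.1}) and (\ref{1.3.2}) is therefore incomplete at its central step. (There is also a secondary slip: in your factorization $p_t=\sh(k_t)\circ\overline{\sh(k_t)}\circ(\delta+\ch(k_t))^{-1}$ the derivative $\grad_y$ falls on the second variable of the \emph{rightmost} factor, i.e.\ on $(\delta+\ch(k_t))^{-1}$, not on $\overline{\sh(k_t)}$; you would need to commute factors before distributing the derivative.)

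The paper's proof avoids operator norms of $\sh(k_t)$ entirely. It uses the pure Hilbert--Schmidt submultiplicativity $\llp{A\circ B}_{L^2}\leq\llp{A}_{L^2}\llp{B}_{L^2}$, so each factor may be as large as $N^{1/2}$ and the product $N$ is cancelled exactly by the prefactor $N^{-1}$. The only thing needed is to transfer the conservation-law bounds from $\sh(k_t)$ to $p_t$, and this is done not through your inverse-operator factorization but through the identity $\overline{\sh(k)}\circ\sh(k)=p\circ p+2p$ and its differentiated version $\overline{\grad_x\sh(k)}\circ\grad_y\sh(k)=\grad_xp\circ\grad_yp+2\grad_x\grad_yp$: taking traces and using that both $\grad_x\grad_y(p\circ p)$ and $2\grad_x\grad_yp$ are positive trace class, one gets $\llp{p_t}_{L^2}\leq\llp{\sh(k_t)}_{L^2}$ and $\llp{\grad p_t}_{L^2}\leq\llp{\grad\sh(k_t)}_{L^2}$. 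Combined with a Plancherel step distributing the half-derivatives ($|\xi|^{1/2}|\eta|^{1/2}\leq|\xi|+|\eta|$), this closes both endpoint estimates with no structural information beyond the conservation laws. If you want to salvage your scheme, replace the mixed HS/operator-norm step by the HS/HS step and replace the Bogoliubov inversion by this positivity argument.
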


\begin{proof}
Using Plancherel identity and Cauchy-Schwarz inequality, we establish the estimate
\begin{align}
& \Lp{ |\grad_x|^{\frac{1}{2}}|\grad_y|^{\frac{1}{2}} u_t\circ p_t}{L^2(dxdy)} \label{1.3.3}\\
&\lesssim\  \llp{\grad_xu_t}_{L^2(dxdy)}\Lp{p_t}{L^2(dxdy)} + \Lp{u_t}{L^2(dxdy)}\Lp{\grad_y p_t}{L^2(dxdy)}. \nonumber
\end{align}
Next, taking derivatives of the kernel of the operator identity
\begin{align}\label{hyptrig-id2}
\overline{u}\circ u=p\circ p +2p 
\end{align}
yields the operator identity
\begin{align*}
\overline{\grad_xu}\circ\grad_yu=\grad_xp\circ \grad_y p +2\grad_x\grad_y p .
\end{align*}
In particular,  we have that
\begin{align*}
\llp{\grad_x u}^2_{L^2(dxdy)}=\Lp{\grad_xp\circ \grad_y p +2\grad_x\grad_y p }{\text{tr}} \geq \Lp{\grad_xp}{L^2(dxdy)}^2
\end{align*}
since both $\grad_x\grad_y(p\circ p+2p)$ and $2\grad_x\grad_y p$ are positive trace class operators. Hence combining estimate (\ref{1.3.3}) with
the conservation laws, we obtain the estimate
\begin{align*}
&N^{-1}\Lp{|\grad_x|^{\frac{1}{2}}|\grad_y|^{\frac{1}{2}} u_t \circ p_t}{L^2(dxdy)}\lesssim N^{-1}\Lp{\grad_xu_t}{L^2(dxdy)}\Lp{u_t}{L^2(dxdy)} \lesssim 1.
\end{align*}
Likewise, we have shown that
\begin{align*}
N^{-1}\Lp{ \grad_xu_t\circ \grad_yp_t}{L^2(dxdy)} \lesssim N^{-1}\llp{\grad_xu_t}_{L^2}^2 \lesssim 1.
\end{align*}
\end{proof}

Next, to estimate $I_1(t)$, we state a useful global estimate for the $\Lambda$, which is established in our recent paper \cite{CGMZ}. See Theorem 1.1 and Theorem 1.2  in \cite{CGMZ}.
\vspace{3mm}

Recall the equation for $\Lambda(t)$ is given by
\begin{align}
\left(\vect{S}+V\right)\Lambda =& -(v_N\Lambda)\circ \Gamma -\bar\Gamma\circ (v_N\Lambda)-(v_N\bar\Gamma)\circ\Lambda -\Lambda\circ(v_N\Gamma) \label{Lamb-eq}\\
&+ 2(v_N\ast |\phi|^2)(x)\phi(x)\phi(y) + 2(v_N\ast|\phi|^2)(y)\phi(y)\phi(x) =: F \nonumber
\end{align}
 where $v_N\Lambda = v_N(x-y)\Lambda(x, y)$ and 
 \begin{align*}
 V:=\frac{1}{N}v_N+(v_N\ast \diag\Gamma)(x)+(v_N\ast\diag\Gamma)(y).
 \end{align*}
 For the convenience of the reader, let us also recall the theorem. 
\begin{lemma}[Theorem 1.2 of \cite{CGMZ}]\label{main-lem}
Let $\Lambda(t)$ be a solution to the time-dependent HFB equations. Then we have the following time-independent global estimate
\begin{align}
\llp{\grad_x\grad_y \Lambda(t)}_{L^2(dxdy)} \lesssim \llp{\grad_x\grad_y\Lambda_0}_{L^2(dxdy)} + N^{c_0},
\end{align}
where $c_0$ is a positive constant.
\end{lemma}
\begin{remark}
The positive power $c_0$ is chosen to be $\frac{11}{2}$ in \cite{CGMZ}, which is not optimal. One may apply other methods to improve (lower) this power. But we will not explore this point in this paper since a positive power is sufficient for our problem. See Remark \ref{beta}. 
\end{remark}
\begin{remark}
The proof of Lemma \ref{main-lem} tightly depends on a coordinate-flexible Strichartz estimate and a bootstrap argument. (See Theorem 2.3, 2.4 and 2.5 in \cite{CGMZ}.) Other ingredients include an interaction Morawetz argument and a delicate analysis of the nonlinearity. We leave it to interested readers.
\end{remark}
 Moreover, by interpolation, we can obtain 
\begin{lemma}
There exists $\varepsilon_0>0$ such that 
\begin{align}
N^{-1}\llp{|\grad_x|^{\frac{1}{2}+\varepsilon}|\grad_y|^{\frac{1}{2}+\varepsilon} u}_{L^2(dxdy)} \lesssim C
\end{align}
for $0<\varepsilon\leq \varepsilon_0$. 
\end{lemma}
\begin{proof}
Applying Lemma \ref{main-lem} and \eqref{hyptrig-id} give us the estimate
\begin{align*}
&N^{-1}\llp{\grad_x\grad_yu(t)}_{L^2(dxdy)} \\
&\lesssim\ N^{-1}\llp{\grad_x\grad_y\sh(2k)}_{L^2(dxdy)}+N^{-1}\llp{\grad_x u\circ
\grad_y p}_{L^2(dxdy)}\\
&\lesssim\ \llp{\grad_x\phi}^2_{L^2(dx)}+\llp{\grad_x\grad_y\Lambda}_{L^2(dxdy)} + N^{-1}\llp{\grad_x u\circ
\grad_y p}_{L^2(dxdy)}\\
&\lesssim\ 1+ N^{c_0}.
\end{align*}
By interpolating the above inequality with the energy estimate
\begin{align}
N^{-1}\llp{|\grad_x|^{\frac{1}{2}}|\grad_y|^{\frac{1}{2}}u}_{L^2(dxdy)} \lesssim N^{-1}\llp{\grad_x u}_{L^2(dxdy)} 
\lesssim \frac{1}{\sqrt{N}},
\end{align}
we have shown that there exists $\varepsilon_0>0$ such that
\begin{align*}
N^{-1}\llp{|\grad_x|^{\frac{1}{2}+\varepsilon_0}|\grad_y|^{\frac{1}{2}+\varepsilon_0}\sh(k_t)}_{L^2(dxdy)} \lesssim C
\end{align*}
where $C$ is a constant independent of $N$ and $t$. In fact, we see that for all $0<\varepsilon\leq \varepsilon_0 = \frac{1}{2(2c_0+1)}$ we have the estimate
\begin{align}
N^{-1}\llp{|\grad_x|^{\frac{1}{2}+\varepsilon}|\grad_y|^{\frac{1}{2}+\varepsilon}u(t)}_{L^2(dxdy)} \lesssim N^{-\frac{1}{2}+(2c_0+1)\varepsilon}.
\end{align}

\end{proof}

\begin{remark}\label{beta}
Since we need $2\beta(\frac{1}{2}+\varepsilon)<1$, we need the assumption that $0<\varepsilon\leq \min(\frac{1-\beta}{2\beta}, \frac{1}{2(2c_0+1)})$ in our proof of the global well-posedness of the TDHFB system. Noticing that we can choose $\epsilon>0$ sufficiently small, a positive power $c_0$ is enough.
\end{remark}

\section{Global Well-posedness of the TDHFB System}
Let us introduce some notation that we will use in the proof of the uniform in $N$ global well-posedness of 
\eqref{TDHFB} for $0<\beta<1$.

\subsection{Dispersive Estimates} For the sake of completeness, we recall some of the standard a-priori estimates for the $\Gamma$ and $\Lambda$
 equations  proven in \cite{GM3, GM4}. However, we will be more keen on keeping track of the parameters used in \cite{GM3, GM4} to make some of their proofs
  more transparent, which will be necessary when we prove our global well-posedness statement.  
  
 Recall the definition of the $X^{s, b}$ space (or sometimes known as the Bourgain space).
 In this paper, we take $s=0$ and write $X^b = X^{0, b}$. 
   Let us denote $\vect{S}_\pm$ and $\vect{S}$ as in \eqref{schrodinger-op}. 
   Then, we see that the spacetime symbol associated with
 $\vect{S}_\pm$ is $\tau+|\xi|^2-|\eta|^2$ and the spacetime symbol associated 
 with $\vect{S}$ is either $\tau+|\xi|^2+|\eta|^2$ or $\tau+|\xi|^2$, depending on the context.
 We define the norms
 \begin{align*}
 \Lp{f}{X^b_\pm} =&\ \Lp{\langle \tau+|\xi|^2-|\eta|^2\rangle^b \widetilde f (\tau, \xi, \eta)}{L^2(d\tau d\xi d\eta)},\\
 \Lp{f}{X^b_S} =&\ \Lp{\langle \tau+|\xi|^2+|\eta|^2\rangle^b \widetilde f (\tau, \xi, \eta)}{L^2(d\tau d\xi d\eta)}
 \end{align*}
 when $d=6$ or that
   \begin{align*}
 \Lp{f}{X^b_S} = \Lp{\langle \tau+|\xi|^2\rangle^b \widetilde f (\tau, \xi)}{L^2(d\tau d\xi)}
 \end{align*}
when $d=3$. Here, $\widetilde f$ denotes the spacetime Fourier transform defined by
\begin{align}
 \widetilde f(\tau, \xi, \eta) = \int dtdxdy\ e^{-i(\tau t + \xi\cdot x+\eta\cdot y)} f(t, x, y).
\end{align}
See Chapter 2 of \cite{Tao} for an exposition of the topics. 

Let us prove the following time localization lemma with characteristic cutoff function which is a
generalization of the smooth time-localization of Lemma 2.11 in \cite{Tao}. 

\begin{lemma}\label{stable-time-cutoff}
 Let $c(t)$ be the characteristic function of $[0, 1]$. Suppose $0<T<1$ and $-\frac{1}{2}<b'\leq b<\frac{1}{2}$, then we have
 \begin{subequations}
 \begin{align}\label{time-cutoff}
  \llp{c(t/T)u}_{X^{b'}} \lesssim_{c, b, b'} T^{b-b'}\llp{u}_{X^{b}}.
 \end{align}
 Likewise, we have the estimate
 \begin{align}
    \llp{c(t/T)u}_{X^{b'}_\pm} \lesssim_{c, b, b'} T^{b-b'}\llp{u}_{X^{b}_\pm}.
 \end{align}
 \end{subequations}
\end{lemma}
\begin{proof}
Let us define the convolution operator $I(f)(t):= (\widehat{c(\cdot/T)}\ast f)(\tau)$, then it is clear that $I$ 
is a  $L^2$-$L^2$ bounded operator since
\begin{align*}
 \llp{I(f)}_{L^2(d\tau)} = \llp{c(\cdot/T)f}_{L^2(dt)} \leq \llp{f}_{L^2(dt)} =\llp{f}_{L^2(d\tau)}.
\end{align*}
Moreover, we see that $|\bd^k \widehat{c(\cdot/T)}(\tau)| \lesssim |\tau|^{-1-k}$ for $k= 0, 1$. Then, by Corollary of
Theorem 2 in section V.4.2 in Stein, we see have the estimate
\begin{align}
  \llp{I(f)}_{L^2(\omega(\tau)d\tau)} \lesssim \llp{f}_{L^2(\omega(\tau)d\tau)}
\end{align}
where $\omega \in A_2$; see chapter V of Stein for definition of $A_p$ spaces. In particular, it follows that
\begin{align}\label{cutoff1}
   \llp{c(t/T)u}_{X^{b}} \lesssim_{c, b} \llp{u}_{X^{b}}
\end{align}
which is mainly what we need from this lemma.

 Now to prove \eqref{time-cutoff}, we decompose $u$ into two parts
 \begin{align*}
u &=\ P_{\langle \tau+|\xi|^2+|\eta|^2\rangle \geq \frac{1}{T}}u + P_{\langle \tau+|\xi|^2+|\eta|^2\rangle < \frac{1}{T}}u\\
  &=:\ P_1(c(t/T) u) +P_2(c(t/T)u).
 \end{align*}
Then it follows
\begin{align*}
 \llp{c(t /T)P_1 u}_{X^{b'}}^2 =&\ 
 \int \llp{\widehat{c(\cdot/T)}\ast \widetilde{P_1 u}}_{L^2(\langle \tau+|\xi|^2+|\eta|^2\rangle^{2b'} d\tau)}^2 d\xi d\eta\\
 \lesssim&\ \int \llp{\widetilde{P_1 u}}_{L^2(\langle \tau+|\xi|^2+|\eta|^2\rangle^{2b'} d\tau)}^2 d\xi d\eta\\
 \lesssim&\ T^{2(b-b')}\int \llp{\tilde u}_{L^2(\langle \tau+|\xi|^2+|\eta|^2\rangle^{2b} d\tau)}^2 d\xi d\eta
\end{align*}
where the second inequality follows from the remark of Stein chapter V.6.4. which states that
$|\tau|^{2b}\in A_2$ provided $|b|<\frac{1}{2}$.

Finally, we see that
\begin{align*}
  \llp{\widehat{P_2u}(t)}_{L^2(d\xi d\eta)} \lesssim&\ 
  \llp{\int_{\langle \tau +|\xi|^2+|\eta|^2\rangle\leq 1/T} d\tau\ |\tilde{u}(\tau, \xi, \eta)|}_{L^2(d\xi d\eta)}\\
  \lesssim&\ T^{b-\frac{1}{2}}\llp{\left(\int d\tau\ \langle \tau +|\xi|^2+|\eta|^2\rangle^{2b}|
  \widetilde{P_2u}(\tau, \xi, \eta)|^2\right)^{\frac{1}{2}}}_{L^2(d\xi d\eta)}\\
=&\ T^{b-\frac{1}{2}}\llp{P_2u}_{X^b}
  \end{align*}
which means
\begin{align}\label{cutoff2}
 \llp{c(t/T)P_2u}_{X^0}^2 \leq&\  \int^T_0 dt\ \llp{P_2u(t)}_{L^2(dx dy)}^2 
 \lesssim T^{2b}\llp{P_2u}_{X^b}^2.
\end{align}
Interpolating \eqref{cutoff2} with \eqref{cutoff1} where $u$ is replaced with $P_2u$ yields the desired result. 
\end{proof}

Here are some standard dispersive estimates which we will need later in the section.
The proofs of the following two lemmas can be found in section 4 of \cite{GM4}.
\begin{lemma}\label{lemma 4.1}
 Let $\delta>\frac{1}{2}$ and $q, r \geq 2$ satisfy the admissible condition. Then we have the estimates 
 \begin{subequations}
 \begin{align}
  \llp{F}_{L^q(dt)L^r(dx)L^2(dy)} \lesssim_\delta& \llp{F}_{X^{\delta}}, \text{ and its dual estimate}\\
  \llp{F}_{X^{-\delta}}\lesssim_\delta& \llp{F}_{L^{q'}(dt)L^{r'}(dx)L^2(dy)}.
 \end{align}
 \end{subequations}
\end{lemma}

\begin{lemma}\label{lemma 4.2}
 Let $0<\delta<\frac{1}{2}$ and $q, r>2$ such that $\frac{2}{q}+\frac{3}{r}=\frac{5-4\delta}{2}$. Then 
 we have
 \begin{subequations}
 \begin{align}
  \llp{F}_{L^q(dt)L^r(dx)L^2(dy)} \lesssim_\delta& \llp{F}_{X^{\delta}}, \ \text{ and its dual estimate}\\
  \llp{F}_{X^{-\delta}} \lesssim_\delta& \llp{F}_{L^{q'}(dt)L^{r'}(dx)L^2(dy)}.
 \end{align}
\end{subequations}
\end{lemma}

\subsection{Global Well-posedness} Fix $\varepsilon>0$ small enough and consider  $c(t)=c_{I}(t)$ the characteristic function
on the interval $I=[T_0, T_1]$. Then we define  
\begin{subequations}
\begin{align}
\vect{N}_{I}(\Lambda) :=
 & \sup_z\llp{\langle \grad_x\rangle^{\frac{1}{2}+\varepsilon}c(t)\Lambda(t, x, x+z)}_{L^2(dt)L^2(dx)}\label{norm1}\\
& + \sup_z\llp{|\bd_t|^{\frac{1}{4}}[c(t)\Lambda(t, x, x+z)]}_{L^2(dt)L^2(dx)} \nonumber\\
& + \llp{\langle\grad_x\rangle^{\frac{1}{2}+\varepsilon}\langle \grad_y\rangle^{\frac{1}{2}+\varepsilon} c(t)\Lambda}_{L^\infty(dt)L^2(dxdy)}\nonumber\\
& + \llp{\langle\grad_x\rangle^{\frac{1}{2}+\varepsilon}\langle \grad_y\rangle^{\frac{1}{2}+\varepsilon} c(t)\Lambda}_{L^2(dt)L^6(dx)L^2(dy)}\nonumber\\
&+\llp{\langle\grad_x\rangle^{\frac{1}{2}+\varepsilon}\langle \grad_y\rangle^{\frac{1}{2}+\varepsilon} c(t)\Lambda}_{L^2(dt)L^6(dy)L^2(dx)}\nonumber\\
\vect{N}_{I}(\Gamma) :=&\ \sup_z\llp{|\grad_x|^{\frac{1}{2}}\langle \grad_x\rangle^\varepsilon c(t)\Gamma(t, x, x+z)}_{L^2(dt)L^2(dx)}\label{norm2}\\
&+ \llp{\langle\grad_x\rangle^{\frac{1}{2}+\varepsilon} \langle\grad_y\rangle^{\frac{1}{2}+\varepsilon} c(t)\Gamma}_{L^\infty(dt)L^2(dxdy)} \nonumber\\
& + \llp{\langle\grad_x\rangle^{\frac{1}{2}+\varepsilon}\langle \grad_y\rangle^{\frac{1}{2}+\varepsilon} c(t)\Gamma}_{L^2(dt)L^6(dx)L^2(dy)}\nonumber\\
&+\llp{\langle\grad_x\rangle^{\frac{1}{2}+\varepsilon}\langle \grad_y\rangle^{\frac{1}{2}+\varepsilon} c(t)\Gamma}_{L^2(dt)L^6(dy)L^2(dx)}\nonumber\\
\vect{N}_{I}(\phi) := &\ \llp{\langle\grad_x\rangle^{\frac{1}{2}+\varepsilon}c(t)\phi}_{L^\infty(dt)L^2(dx)}+\llp{\langle \grad_x\rangle^{\frac{1}{2}+\varepsilon}c(t)\phi}_{L^2(dt) L^6(dx)}.\label{norm3}
\end{align}
\end{subequations}
Like before, we denote the sum of the three norms by
\begin{align}
\vect{N}_{I}(X):= \vect{N}_{I}(\phi) +\vect{N}_{I}(\Gamma)+\vect{N}_{I}(\Lambda).
\end{align}
If $I=[T_0, T_1] = [0, T]$ then we recover $\vect{N}_{T}(X) = \vect{N}_{I}(X)$. Moreover, we adopt the notation
\begin{align}
\vect{N}_{I}(DX):= \vect{N}_{I}(D\phi)+\vect{N}_{I}(D\Gamma)+\vect{N}_{I}(D\Lambda)
\end{align}
for any differential operator $D$. 

The goal of this section is to prove the global well-posedness of solutions for the time-dependent HFB equations. However, it suffices to 
prove an a-priori estimate of the form 
\begin{align}\label{apriori-est}
\vect{N}_T(X) \lesssim F(T)
\end{align}
for some positive real-valued function $F$ defined on all of $[0, \infty)$.

 Let us now recall some propositions from \cite{GM4}, which are modified by Lemma \ref{stable-time-cutoff}. For the $\phi$ and $\Gamma$ equations, we have the following
 proposition.

\begin{prop}[Proposition 3.7 in \cite{GM4}]\label{GM-prop1}
Let $c(t)$ be the characteristic function on $I=[T_0, T_1]$ such that $T_1-T_0<1$. Suppose $\phi(t)$ and $\Gamma(t)$ are solutions to 
\begin{align*}
 \vect{S}_\pm \Gamma =&\ F, \ \ \Gamma(T_0) = \Gamma_0\\
 \vect{S} \phi =&\ f, \ \ \phi(T_0)= \phi_0.
\end{align*}
Then for any $\delta>0$ we have the estimates
\begin{subequations}
\begin{align}
 \vect{N}_{I}(\Gamma) \lesssim_\delta& \llp{\langle \grad_x\rangle^{\frac{1}{2}+\varepsilon}
 \langle \grad_y\rangle^{\frac{1}{2}+\varepsilon}\Gamma_0}_{L^2(dxdy)}\\
 & +\llp{\langle \grad_x\rangle^{\frac{1}{2}+\varepsilon}
 \langle \grad_y\rangle^{\frac{1}{2}+\varepsilon}c(t)F}_{X_\pm^{-\frac{1}{2}+\delta}} \nonumber\\
 \vect{N}_{I}(\phi)\lesssim_\delta& \llp{\langle \grad_x\rangle^{\frac{1}{2}+\varepsilon}
 \phi_0}_{L^2(dxdy)}+\llp{\langle \grad_x\rangle^{\frac{1}{2}+\varepsilon}c(t)f}_{X^{-\frac{1}{2}+\delta}}.
\end{align}
\end{subequations}
\end{prop}

For the $\Lambda$ equations, with the potential $\frac{1}{N}v_N(x-y)\Lambda$, we have the following proposition, which is also
the main result of \cite{GM4}.
\begin{prop}[Theorem 3.8 in \cite{GM4}]\label{GM-prop2}
Assume $c(t)$ as in Proposition \ref{GM-prop1}. Let $0<\beta<1$. Suppose $\Lambda(t)$ is a solution to 
 \begin{align}
  \left(\vect{S}+\frac{1}{N}v_N(x-y)\right)\Lambda = F, \ \ \ \Lambda(0) = \Lambda_0.
 \end{align}
 Then for all $\delta>0$ sufficiently small, the following estimate
 \begin{align}
  &\vect{N}_{I}(\Lambda)\\
  &\lesssim_\delta \llp{\langle \grad_x\rangle^{\frac{1}{2}+\varepsilon}
 \langle \grad_y\rangle^{\frac{1}{2}+\varepsilon}\Lambda_0}_{L^2(dxdy)} +\llp{\langle \grad_x\rangle^{\frac{1}{2}+\varepsilon}
 \langle \grad_y\rangle^{\frac{1}{2}+\varepsilon}c(t)F}_{X^{-\frac{1}{2}+\delta}}\nonumber \\
 &\quad +\min\left\{  \llp{\langle \grad_x\rangle^{\frac{1}{2}+\varepsilon}
 \langle \grad_y\rangle^{\varepsilon}c(t)F}_{X^{-\frac{1}{4}-\delta}}, \llp{\langle \grad_x\rangle^{\varepsilon}
 \langle \grad_y\rangle^{\frac{1}{2}+\varepsilon}c(t)F}_{X^{-\frac{1}{4}-\delta}}\right\}\nonumber
 \end{align}
holds uniformly in $N$.  

\end{prop}

We begin by proving couple lemmas to aid us in establishing \eqref{apriori-est}.
\begin{lemma}[sharp trace-type lemma]\label{sharp-trace}
Let $f \in \mathcal{S}(\rr^3\times \rr^3)$. Then the following estimate holds
\begin{align}
\llp{|\grad_x|^{\frac{1}{2}}\diag f}_{L^2(dx)}\lesssim \Lp{\grad_x\grad_y f}{L^2(dxdy)}.
\end{align}
\end{lemma}

\begin{proof}
Note that
\begin{align*}
\widehat{\diag f}(\xi) = \frac{1}{(2\pi)^{3}} \int d\eta\ \widehat f(\xi-\eta, \eta).
\end{align*}
 Then, by Plancherel and Cauchy Schwarz, we have the estimate
\begin{align*}
\llp{|\grad_x|^{\frac{1}{2}}\diag f}_{L^2(dx)}^2  \sim&\ \int d\xi\ |\xi|\left|\int d\eta\ \widehat f (\xi-\eta, \eta) \right|^2\\
\lesssim&\ \sup_{\xi} A(\xi) \int d\xi d\eta\ |\xi|^2|\eta|^2|\widehat f (\xi, \eta)|^2
\end{align*}
where
\begin{align*}
A(\xi) = \int d\eta\ \frac{|\xi|}{|\xi-\eta|^2|\eta|^2}.
\end{align*}
Hence, to complete the proof, it suffices to show $\sup_\xi A(\xi)<\infty$. 

Let us evaluate the integral $A(\xi)$ into each of the following three regions: $R_1=\{\frac{1}{2}|\xi|\leq |\eta| \leq 2|\xi|\}$, $R_2=\{\frac{1}{2}|\xi|\leq |\eta-\xi| \leq 2|\xi|\}$, and $R_3=\{|\eta|>2|\xi| \text{ or } |\eta-\xi|>2|\xi|\}$. Then, we see that
\begin{align*}
\int_{R_1} d\eta\ \frac{|\xi|}{|\xi-\eta|^2|\eta|^2} \lesssim \frac{1}{|\xi|} \int_{|\xi-\eta|\leq 3|\xi|}  \frac{d\eta}{|\xi-\eta|^2} \lesssim 1
\end{align*}
and likewise for $R_2$. For the region $R_3$, we see that $|\eta|>|\xi|$ and $2|\xi-\eta| \geq |\xi-\eta|+|\xi| \geq |\eta|$, which means
\begin{align*}
\int_{R_3} d\eta\ \frac{|\xi|}{|\xi-\eta|^2|\eta|^2} \lesssim \int_{|\eta|>|\xi|}d\eta\  \frac{|\xi|}{|\eta|^4} \lesssim 1.
\end{align*}
Thus, we have that $\sup_\xi A(\xi)<\infty$. 
\end{proof}

\begin{lemma}\label{no-deriv}
Let $(\phi(t), \Gamma(t), \Lambda(t))$ be a solution to \eqref{TDHFB} and $I=[T_0, T_1]$. Then there exists $\delta>0$ such that
we have the following estimates
\begin{align}\label{close-est}
\vect{N}_{I}(X) \lesssim (\text{data}) +|I|^\delta   \vect{N}_{I}(X).
\end{align}
\end{lemma}

\begin{proof}
It suffices to consider the proof of estimate \eqref{close-est} for $\Gamma$ and $\Lambda$ since the proof for $\phi$ is similar. Recall the equation for $\Gamma$ is given by
\begin{equation}
\begin{aligned}
\vect{S}_\pm \bar\Gamma =& -(v_N\Lambda)\circ\bar\Lambda +\Lambda \circ (v_N\bar \Lambda)-(v_N\bar\Gamma)\circ\bar \Gamma+\bar\Gamma\circ(v_N\bar \Gamma)\\
&- (v_N\ast \diag\Gamma)\cdot \bar\Gamma+\bar \Gamma \cdot (v_N\ast \diag\Gamma)\\
&+ 2 (v_N\ast |\phi|^2)\cdot \sv{\phi}\csv{\phi}-2\sv{\phi}\csv{\phi} \cdot (v_N\ast |\phi|^2)
=: F.
\end{aligned}
\end{equation}
Then, by Proposition \ref{GM-prop1}  and Lemma \ref{lemma 4.2}, we have that for any $0<\delta<\frac{\varepsilon}{2}$ 
the following estimate holds
 \begin{align*}
 \vect{N}_{I}(\Gamma) \lesssim&\  \llp{\langle \grad_x\rangle^{\frac{1}{2}+\varepsilon}\langle\grad_y\rangle^{\frac{1}{2}+\varepsilon}\Gamma(T_0, \cdot)}_{L^2(dxdy)}\\
 &+ \Lp{\langle \grad_x\rangle^{\frac{1}{2}+\varepsilon}\langle\grad_y\rangle^{\frac{1}{2}+\varepsilon}c(t)F}{X^{-\frac{1}{2}+\frac{\varepsilon-2\delta}{2}}}\\ 
 \lesssim&\ \llp{\langle \grad_x\rangle^{\frac{1}{2}+\varepsilon}\langle\grad_y\rangle^{\frac{1}{2}+\varepsilon}\Gamma(T_0, \cdot)}_{L^2(dxdy)}\\
 &+|I|^{\frac{\varepsilon-2\delta}{2}} \llp{\langle \grad_x\rangle^{\frac{1}{2}+\varepsilon}\langle\grad_y\rangle^{\frac{1}{2}+\varepsilon}c(t)F}_{L^{2}(dt)L^{\frac{6}{5}+}(dx) L^2(dy)}.
 \end{align*}
Here, the symbol $\frac{6}{5}+$ denotes a fixed number slightly bigger than $\frac{6}{5}$ with dependence on $\varepsilon$, in fact, $\frac{6}{5}+=\frac{6}{5-2\varepsilon}$. 

For the forcing term $\Gamma\cdot (v_N\ast\diag\Gamma)$, we apply Young's convolution inequality, Sobolev inequality, and Corollary \ref{energy1}
to obtain the estimate
\begin{align*}
&\llp{\langle\grad_x\rangle^{\frac{1}{2}+\varepsilon} \langle\grad_y\rangle^{\frac{1}{2}+\varepsilon}[\bar \Gamma \cdot (v_N\ast \diag\Gamma)]}_{L^{2}(I)L^{\frac{6}{5}+}(dx) L^2(dy)}\\
 &\lesssim\ \llp{\langle\grad_y\rangle^{\frac{1}{2}+\varepsilon}\bar \Gamma }_{L^{\infty}(I)L^{3+}(dx) L^2(dy)}\llp{\langle\grad_x\rangle^{\frac{1}{2}+\varepsilon} \diag\Gamma}_{L^{2}(I)L^{2}(dx)}\\
 &\quad +  \llp{\langle \grad_x\rangle^{\frac{1}{2}+\varepsilon}\langle\grad_y\rangle^{\frac{1}{2}+\varepsilon}\bar \Gamma }_{L^{\infty}(I)L^{2}(dxdy)}\llp{\diag\Gamma}_{L^{2}(I)L^{3+}(dx)}\\
&\lesssim\ \vect{N}_{I}(\Gamma),
\end{align*}
where $3+ = \frac{3}{1-\varepsilon}$. Next, for the forcing term $\Lambda\circ (v_N\bar\Lambda)$, we apply Kato-Ponce, Sobolev, and estimate \eqref{est3} from the 
previous section to obtain the estimate
\begin{align*}
 &\llp{\langle\grad_x\rangle^{\frac{1}{2}+\varepsilon}\langle\grad_y\rangle^{\frac{1}{2}+\varepsilon} [\Lambda \circ (v_N\bar \Lambda)]  }_{L^{2}(I)L^{\frac{6}{5}+}(dx) L^2(dy)}\\
 &\lesssim \int dz\ v_N(z) \llp{\langle\grad_x\rangle^{\frac{1}{2}+\varepsilon}\bar\Lambda(x, x-z)\langle\grad_y\rangle^{\frac{1}{2}+\varepsilon}\Lambda(x-z, y)}_{L^2(I)L^{\frac{6}{5}+}(dx)L^2(dy)}\\
  &\quad + \int dz\ v_N(z) \llp{\bar\Lambda(x, x-z)\langle\grad_x\rangle^{\frac{1}{2}+\varepsilon}\langle\grad_y\rangle^{\frac{1}{2}+\varepsilon}\Lambda(x-z, y)}_{L^2(I)L^{\frac{6}{5}+}(dx)L^2(dy)}\\
  &\lesssim\  \int dz\ v_N(z) \llp{\langle\grad_x\rangle^{\frac{1}{2}+\varepsilon}\bar\Lambda(x, x-z)}_{L^2(dtdx)}\llp{\langle\grad_y\rangle^{\frac{1}{2}+\varepsilon}\Lambda}_{L^\infty(I)L^{3+}(dx)L^2(dy)}\\
    &\quad +  \int dz\ v_N(z) \llp{\Lambda(x, x-z)}_{L^2(dt)L^{3+}(dx)}\llp{\langle\grad_x\rangle^{\frac{1}{2}+\varepsilon}\langle\grad_y\rangle^{\frac{1}{2}+\varepsilon}\Lambda}_{L^\infty(I)L^2(dxdy)}\\
   &\lesssim \vect{N}_{I}(\Lambda).
\end{align*}
The remaining nonlinear terms $(v_N\Gamma)\circ\Gamma$ and $(v_N\ast |\phi|^2)\cdot \sv{\phi}\csv{\phi}$ can be handled in a similar manner. Thus, we have shown
\begin{align*}
 \vect{N}_{I}(\Gamma) \lesssim C_0(T_0)+ |I|^\frac{\varepsilon-2\delta}{2}\vect{N}_{I}(X).
\end{align*}

Next, let us recall the equation for $\Lambda$ given by
\begin{align}
\left(\vect{S}+\frac{1}{N}v_N\Lambda\right) =
&  -(v_N\ast\diag \Gamma)\cdot \Lambda -\Lambda \cdot (v_N\ast \diag\Gamma)\\
&-(v_N\Lambda)\circ\Gamma-\bar\Gamma\circ(v_N\Lambda)-(v_N\bar\Gamma)\circ\Lambda-\Lambda\circ(v_N\Gamma)\nonumber\\
&+ 2 (v_N\ast |\phi|^2)\cdot \phi\otimes\phi-2\phi\otimes\phi \cdot (v_N\ast |\phi|^2)
=: G\nonumber.
\end{align}
To estimate $\Lambda$, we employ Proposition \ref{GM-prop2} and Lemma \ref{lemma 4.2} to get the estimate
 \begin{align*}
 \vect{N}_{I}(\Lambda)\lesssim&\ \llp{\langle \grad_x\rangle^{\frac{1}{2}+\varepsilon}\langle\grad_y\rangle^{\frac{1}{2}+\varepsilon}\Lambda(T_0, \cdot)}_{L^2(dxdy)}\\
 &+|I|^{\frac{\varepsilon-2\delta}{2}} \llp{\langle \grad_x\rangle^{\frac{1}{2}+\varepsilon}\langle\grad_y\rangle^{\frac{1}{2}+\varepsilon}G}_{L^{2}(I)L^{\frac{6}{5}+}(dx) L^2(dy)}\\
 &+|I|^{\frac{\varepsilon-2\delta}{2}}\min\bigg(\llp{\langle \grad_x\rangle^{\varepsilon}\langle\grad_y\rangle^{\frac{1}{2}+\varepsilon}G}_{L^{\frac{4}{3}}(I) L^2(dxdy)}, \\
 &\qquad \qquad \qquad\ \ \llp{\langle \grad_x\rangle^{\frac{1}{2}+\varepsilon}\langle\grad_y\rangle^{\varepsilon}G}_{L^{\frac{4}{3}}(I) L^2(dxdy)}\bigg).
 \end{align*}
 Hence, to complete the proof, it suffices to estimate $\langle \grad_x\rangle^{\varepsilon}\langle\grad_y\rangle^{\frac{1}{2}+\varepsilon}G$ in $L^{\frac{4}{3}}(I)L^2(dxdy)$ since 
 $ \llp{\langle \grad_x\rangle^{\frac{1}{2}+\varepsilon}\langle\grad_y\rangle^{\frac{1}{2}+\varepsilon}G}_{L^{2}(I)L^{\frac{6}{5}+}(dx) L^2(dy)}$ can be handled in a similar manner like $\Gamma$.  
 
 Let us consider the forcing term $(v_N\ast\diag \Gamma)\cdot \Lambda$. Then, by Kato-Ponce and Sobolev inequalities, we have that
 \begin{align*}
 &\llp{\langle \grad_x\rangle^{\varepsilon}\langle\grad_y\rangle^{\frac{1}{2}+\varepsilon}[(v_N\ast\diag \Gamma)\cdot \Lambda]}_{L^{\frac{4}{3}}(I) L^2(dxdy)}\\
 &\lesssim \llp{\langle \grad_x\rangle^{\varepsilon}\diag \Gamma}_{L^{2}(I) L^{3-}(dx)}\llp{\langle\grad_y\rangle^{\frac{1}{2}+\varepsilon}\Lambda}_{L^{4}(I)
  L^{6+}(dx)L^2(dy)}\\
 &\quad +\llp{\diag \Gamma}_{L^{2}(I) L^3(dx)}\llp{\langle \grad_x\rangle^{\varepsilon}\langle\grad_y\rangle^{\frac{1}{2}+\varepsilon}\Lambda}_{L^{4}(I) 
 L^6(dx)L^2(dy)}\\
  &\lesssim\llp{\langle \grad_x\rangle^{\varepsilon}|\grad_x|^{\frac{1}{2}-\varepsilon}\diag \Gamma}_{L^{2}(I) L^2(dx)}\llp{\langle \grad_x\rangle^{\frac{1}{2}+\varepsilon}\langle\grad_y\rangle^{\frac{1}{2}+\varepsilon}\Lambda}_{L^{4}(I) L^3(dx)L^2(dy)}.
 \end{align*}
 Finally, by Lemma \ref{sharp-trace} and 
Corollary \ref{energy1}, we obtain the estimate
\begin{align*}
 &\llp{\langle \grad_x\rangle^{\varepsilon}\langle\grad_y\rangle^{\frac{1}{2}+\varepsilon}[(v_N\ast\diag \Gamma)\cdot \Lambda]}_{L^{\frac{4}{3}}(I) L^2(dxdy)}\lesssim |I|^{\frac{1}{2}}\vect{N}_{I}(\Lambda).
\end{align*}
Next, let us consider the forcing term $(v_N\Lambda) \circ \Gamma$. By Kato-Ponce, Sobolev inequalities, and conservation of energy, we have that
\begin{align*}
 &\llp{\langle \grad_x\rangle^{\varepsilon}\langle\grad_y\rangle^{\frac{1}{2}+\varepsilon}[(v_N\Lambda)\circ \Gamma]}_{L^{\frac{4}{3}}(I) L^2(dxdy)}\\
 &\lesssim \int dz\ v_N(z)\llp{\langle \grad_x\rangle^{\varepsilon}\Lambda(x, x-z)}_{L^{2}(I) L^{3}(dx)}\llp{\langle\grad_y\rangle^{\frac{1}{2}+\varepsilon}\Gamma}_{L^{4}(I)
  L^{6}(dx)L^2(dy)}\\
  &\quad +\int dz\ v_N(z)\llp{\Lambda(x, x-z)}_{L^{2}(I) L^{3+}(dx)}\llp{\langle \grad_x\rangle^{\varepsilon}
  \langle\grad_y\rangle^{\frac{1}{2}+\varepsilon}\Gamma}_{L^{4}(I)L^{6-}(dx)L^2(dy)}\\
  &\lesssim |I|^{\frac{1}{4}}\vect{N}_{I}(\Lambda).
\end{align*}
The remaining forcing terms can be handled in a similar fashion. Hence it follows
\begin{align*}
&\vect{N}_{I}(\Lambda)\lesssim\ C_0(T_0)+ |I|^\frac{\varepsilon-2\delta}{2}\vect{N}_{I}(X)
\end{align*}
which completes the proof of the lemma.
\end{proof}

Next we aim to obtain a-priori estimates of the form (\ref{apriori-est}).  
\begin{prop}\label{4.3}
Let $T>0$. Assume $(\phi(t), \Gamma(t), \Lambda(t))$ is a solution to the time-dependent HFB system, then we have the following a-priori estimate
\begin{align}\label{apriori}
\vect{N}_T(X) \lesssim (\text{data})+T.
\end{align}
\end{prop}
\begin{proof}
Proposition \ref{no-deriv} implies that for an interval with length similar to $1$, 
\begin{equation}
    \vect{N}_{I}(X)\lesssim_{(\textmd{data})} 1.
\end{equation}
We may split the interval $[0,T]$ into $M$ intervals $I_i$ where $M \sim T$ and the length of each interval $I_i$ is less than $\frac{1}{2}$. Then we sum the norms $ \vect{N}_{I_i}(X)$ up which explains this proposition.
\end{proof}
Now Theorem \ref{main-theorem-1} follows. In particular, we obtained the linear-in-time control for $\vect{N}_T(X)$.
\begin{lemma}\label{low-order-est} 
Let $(\phi(t), \Gamma(t), \Lambda(t))$ be a solution to \eqref{TDHFB} and $I=[T_0, T_1]$. Then there exists $\delta>0$ such that we have the following estimates
\begin{subequations}
\begin{align}\label{grad-est}
 \vect{N}_{I}(\grad_{x+y}X) \lesssim&\ C_1(T_0) + |I|^\delta \vect{N}_{I}(X) \vect{N}_{I}(\grad_{x+y}X) 
\end{align}
where
\begin{align}
C_1(T) :=&\ \llp{(\grad_x\phi(T,\cdot), \grad_{x+y}\Gamma(T, \cdot), \grad_{x+y}\Lambda(T, \cdot))}_{\mathcal{X}^\alpha} .
\end{align}
\end{subequations}
Here, we use the notation $\grad_{x+y}:=\grad_x+\grad_y$. 
\end{lemma}

\begin{proof}
Taking the $\grad_{x+y}$ derivative of (\ref{Lamb-eq}) yields
\begin{align*}
\left( \vect{S}+N^{-1}v_N\right)\grad_{x+y} \Lambda =& -(v_N\Lambda)\circ \grad_{x+y}\Gamma
-(v_N\ast \diag \Gamma)\cdot\grad_{x+y}\Lambda\\
&+\text{similar terms} =: F.
\end{align*}
Note that we used the fact that $\grad_{x+y}$ commutes with $N^{-1}v_N(x-y)$, i.e. $[\grad_{x+y}, N^{-1}v_N(x-y)] = 0$.
By the same argument as in Lemma \ref{no-deriv}, we have the estimate
\begin{align*}
&\vect{N}_{I}(\grad_{x+y}\Lambda)\lesssim C_1(T_0)+|I|^\frac{\varepsilon-2\delta}{2} \llp{\langle\grad_x\rangle^{\frac{1}{2}+\varepsilon}\langle\grad_y\rangle^{\frac{1}{2}+\varepsilon}F}_{L^{2}(I)L^{\frac{6}{5}+}(dx)L^2(dy)}.
\end{align*}
We shall look at two generic cases, as stated above, to deduce \eqref{grad-est}. In the first case, we estimate the term $(v_N\Lambda)\circ \grad_{x+y}\Gamma$, which goes  as follows
\begin{align*}
&\llp{\langle\grad_x\rangle^{\frac{1}{2}+\varepsilon}\langle\grad_y\rangle^{\frac{1}{2}+\varepsilon}(v_N \Lambda)\circ \grad_{x+y}\Gamma}_{L^{2}(I)L^{\frac{6}{5}+}(dx)L^2(dy)} \\
&\lesssim
\int dz\ v_N(z) \llp{\langle\grad_x\rangle^{\frac{1}{2}+\varepsilon} \Lambda(x, x-z) \langle\grad_y\rangle^{\frac{1}{2}+\varepsilon} \grad_{x+y}\Gamma(x-z, y)}_{L^{2}(I)L^{\frac{6}{5}+}(dx)L^2(dy)}\\
&\quad +\int dz\ v_N(z) \llp{\Lambda(x, x-z) \langle\grad_x\rangle^{\frac{1}{2}+\varepsilon}\langle\grad_y\rangle^{\frac{1}{2}+\varepsilon} \grad_{x+y} \Gamma(x-z, y)}_{L^{2}(I)L^{\frac{6}{5}+}(dx)L^2(dy)}\\
&\lesssim \int dz\ v_N(z) \llp{\langle\grad_x\rangle^{\frac{1}{2}+\varepsilon}\Lambda(x, x-z)}_{L^2(I)L^2(dx)}\llp{\langle\grad_x\rangle^{\frac{1}{2}+\varepsilon} \langle\grad_y\rangle^{\frac{1}{2}+\varepsilon}\grad_{x+y}\Gamma}_{L^{\infty}(I)L^2(dxdy)}\\
&\quad +\int dz\ v_N(z) \llp{\Lambda(x, x-z)}_{L^2(I)L^{3+}(dx)}\llp{\langle\grad_x\rangle^{\frac{1}{2}+\varepsilon}\langle\grad_y\rangle^{\frac{1}{2}+\varepsilon}\grad_{x+y}\Gamma}_{L^\infty(I)L^2(dxdy)}\\
&\lesssim \vect{N}_{I}(\Lambda)\vect{N}_{I}(\grad_{x+y}\Gamma).
\end{align*}
In the second case, we estimate the term $(v_N\ast\diag\Gamma)\cdot \grad_{x+y}\Lambda$ as follows
\begin{align*}
&\ \llp{\langle\grad_x\rangle^{\frac{1}{2}+\varepsilon}\langle\grad_y\rangle^{\frac{1}{2}+\varepsilon}[(v_N\ast \diag\Gamma)\cdot\grad_{x+y}\Lambda]}_{L^{2}(I)
L^{\frac{6}{5}+}(dx)L^2(dy)} \\
&\lesssim\ \llp{(v_N\ast \langle\grad_x\rangle^{\frac{1}{2}+\varepsilon}\diag\Gamma)\cdot\langle\grad_y\rangle^{\frac{1}{2}+\varepsilon}\grad_{x+y}\Lambda}_{L^{2}(I)
L^{\frac{6}{5}+}(dx)L^2(dy)} \\
&\quad+\llp{(v_N\ast \langle\grad_x\rangle^{\frac{1}{2}+\varepsilon}\diag\Gamma)\cdot\langle\grad_y\rangle^{\frac{1}{2}+\varepsilon}\grad_{x+y}\Lambda}_{L^{2}(I)
L^{\frac{6}{5}+}(dx)L^2(dy)} \\
&\lesssim\ \vect{N}_{I}(\Gamma)\vect{N}_{I}(\grad_{x+y}\Lambda).
\end{align*}
Hence, combining the above estimates yields
\begin{align*}
\vect{N}_{I}(\grad_{x+y}\Lambda) \lesssim&\  C_1(T_0) + |I|^{\frac{\varepsilon-2\delta}{2}}\{ \vect{N}_{I}(\Lambda)\vect{N}_{I}(\grad_{x+y}\Gamma)\\
&\ +\vect{N}_{I}(\grad_{x+y}\Lambda)\vect{N}_{I}(\Gamma)+\vect{N}_{I}(\grad_{x+y}\phi)\vect{N}_{I}(\phi)\}\\
\lesssim&\ C_1(T_0) +|I|^\frac{\varepsilon-2\delta}{2}\vect{N}_{I}(X)\vect{N}_{I}(\grad_{x+y}X).
\end{align*}
Similarly, we can show
\begin{align*}
\vect{N}_{I}(\grad_{x+y}\Gamma) \lesssim C_1(T_0) +|I|^\frac{\varepsilon-2\delta}{2}\vect{N}_{I}(X)\vect{N}_{I}(\grad_{x+y}X)
\end{align*}
and
\begin{align*}
\vect{ N}_{I}(\grad_{x+y}\phi) \lesssim C_1(T_0) +|I|^\frac{\varepsilon-2\delta}{2}\vect{N}_{I}(X)\vect{N}_{I}(\grad_{x+y}X).
\end{align*}
Therefore, summing up the three inequalities yields (\ref{grad-est}).
\end{proof}

Using the above lemma we could again prove an a-priori growth estimates for the norm of $\grad_{x+y}X$.
\begin{prop}\label{4.6}
Let $T>0$. Suppose ($\phi(t), \Gamma(t)$, $\Lambda(t)$) is a solution to the TDHFB system, then we have the following uniform 
in $N$ a-priori estimate
\begin{subequations}
\begin{align}
\vect{N}_T(\grad_{x+y}X) \lesssim&\ \exp\left(\kappa T \right) \label{grad-apriori}
\end{align}
\end{subequations}
for some $\kappa>0$, which are independent of $T$. 
\end{prop}

\begin{proof}
Proposition \ref{no-deriv} and Proposition \ref{low-order-est} imply that for an interval $I$ with length similar to $1$, 
\begin{equation}
    \vect{N}_{I}(\grad_{x+y}X)\lesssim_{(\textmd{data})} 1.
\end{equation}
Here we note that for this case, the $C_1(T)$ is not conserved, which is different from Proposition \ref{4.3}. Next we split the interval $[0,T]$ into $M$' intervals $I_i=[T_i,T_{i+1}]$ where $M \sim T$ and the length of each interval $I_i$ is less than $\frac{1}{2}$. On each interval,
\begin{equation}
   \vect{N}_{I_i}(\grad_{x+y} X)\lesssim C_1(T_i). 
\end{equation}
Summing them up, we have
\begin{equation}
    \vect{N}_{T}(\grad_{x+y} X) \lesssim \sum_i \vect{N}_{I_i}(\grad_{x+y} X)\lesssim C_1(T_0)+\sum_i C_1(T_i).
\end{equation}
By switching to the continuous $T$-variable , we obtain the estimate 
\begin{align*}
\vect{N}_{T}(\grad_{x+y}X)\lesssim&\  C_1(T_0)+\int^T_0 d\tau\ \vect{N}_\tau(\grad_{x+y}X).
\end{align*}
Finally, applying Gronwall's inequality yields
\begin{align*}
\vect{N}_T(\grad_{x+y}X) \lesssim C_1(T_0)\exp \left( \alpha T\right),
\end{align*}
which explains this proposition. 
\end{proof}

Let us conclude this section with some a-priori estimates for the higher order derivatives of $(\phi, \Gamma, \Lambda)$ which we will later use to estimate $\sh(2k)$. 
\begin{lemma} \label{high-order-est}
Let $(\phi(t), \Gamma(t), \Lambda(t))$ be a solution to \eqref{TDHFB} and $I=[T_0, T_1]$. Then there exists $\delta>0$ such that  for $j\ge 2$ we have the estimates
\begin{subequations}
\begin{align}
\vect{N}_{I}(\grad_{x+y}^j X)
&\lesssim_j\ C_j(T_0) + |I|^\delta \sum_{i=0}^{\lceil j/2 \rceil}\vect{N}_{I}(\grad_{x+y}^iX) \vect{N}_{I}(\grad_{x+y}^{j-i} X)
\end{align}
\end{subequations}
where 
\begin{align}
C_j(T) =&\ \llp{(\grad_x^j\phi(T,\cdot), \grad_{x+y}^j\Gamma(T, \cdot), \grad_{x+y}^j\Lambda(T, \cdot))}_{\mathcal{X}^\alpha}.
\end{align}
In particular, for $|I|$ sufficiently small, we have that
\begin{align}
\vect{N}_{I}(\grad_{x+y}^j X) \lesssim C_j(T_0).
\end{align}

\end{lemma}

\begin{proof}
\noindent The proof is similar to the proof of Lemma \ref{low-order-est}. 
\end{proof}

\begin{prop}\label{4.9}
Let $T>0$. Suppose ($\phi(t), \Gamma(t), \Lambda(t)$) is a solution to the TDHFB system.
Then there exists a constant $\kappa>0$, depending on $j$ and independent of $T$, such that we have the following uniform in $N$ a-priori estimate
\begin{align}\label{gradsq-apriori}
\vect{N}_T(\grad_{x+y}^jX) \lesssim_j&\ \exp\left(\kappa T \right).
\end{align}
\end{prop}

\begin{proof}
The proof is an induction argument. It is similar to the proof of Lemma \ref{4.6} so we omit it. 
\end{proof}

\section{Estimates for $\sh(2k)$ and $\ch(2k)$}
In order to obtain Fock space estimates for the error terms in our quasifree-approximation, we need to establish  
estimates for $\sh(2k)$ and $\ch(2k)$. 

Recall the equation for $\sh(2k)$ is given by
\begin{subequations}
\begin{equation}\label{sh2-eq}
\begin{aligned}
\vect{S}(\sh(2k)) =& -2v_N(x-y)\Lambda-(v_N\Lambda)\circ p_2-\bar p_2\circ (v_N\Lambda)\\
&-((v_N\ast\diag\Gamma)(x) + (v_N\ast\diag\Gamma)(y))\sh(2k)\\
&-(v_N\Gamma)\circ\sh(2k)-\sh(2k)\circ(v_N\Gamma).
\end{aligned}
\end{equation}
Since $\ch(2k) = \delta(x-y) + p_2$ then we could simply write
an equation for $p_2$ instead, that is
\begin{equation}\label{ch2-eq}
\begin{aligned}
\vect{S}_\pm(\bar p_2) =& -(v_N\Lambda)\circ \overline{\sh(2k)}+\sh(2k)\circ (v_N\Lambda)\\
&-((v_N\ast\diag\Gamma)(x) - (v_N\ast\diag\Gamma)(y))\bar p_2\\
&-(v_N\Gamma)\circ \bar p_2+\bar p_2\circ(v_N\Gamma).
\end{aligned}
\end{equation}
\end{subequations}

\begin{remark}
The main difficulty one faces when handling \eqref{sh2-eq} and \eqref{ch2-eq} is in controlling the singular 
potential term $v_N\Lambda$ of the $\sh(2k)$ equation. In a former version of the paper, we required a bounded on the 
time-derivative of the initial which greatly limited the kind of initial data that we could study. 
In particular, the requirement of bounded one time-derivative rules out the possibility of studying 
the case $k(0)\equiv 0$, i.e. the coherent state case (c.f. Remark 2.6 in \cite{GM3}). 
\end{remark}

The following theorem is the main result of this section.
\begin{theorem}\label{sh2-thm}
 Let $\sh(2k)$ satisfies \eqref{sh2-eq} with initial conditions given by the Nonlinear Theorem.  
 Then for $j=0, 1, 2$ we have the estimates
 \begin{subequations}
 \begin{align}
  \llp{\grad_{x+y}^j\sh(2k)(t, \cdot, \cdot)}_{L^2(dxdy)}\lesssim&\ \exp(\kappa T)\label{endpoint1}\\
    \llp{\grad_{x+y}^j\sh(2k)(t, \cdot, \cdot)}_{L^2([0, T])L^6(dx)L^2(dy)}\lesssim&\ \exp(\kappa T)\label{endpoint2}\\
  \sup_x\llp{\sh(2k)(x,\cdot)}_{L^2(dz)} \lesssim&\ \exp(\kappa T)\label{L^2L^infty}
 \end{align}
 \end{subequations}
for some constant $\kappa>0$ and uniform in $t$ on the interval $[0, T]$. Similar estimates hold for $p_2$. 
\end{theorem}

As an immediate consequence, we have the following corollary.
\begin{cor}\label{sh-corollary}
Suppose $(\phi(t), k(t))$ satisfies \eqref{HFB1}. Then there is some constant $\kappa>0$ such that the following estimates
 \begin{subequations}
\begin{align}
 \llp{u(t, \cdot, \cdot)}_{L^2(dxdy)} \lesssim&\ \exp\left( \kappa T\right)\\
  \llp{p(t, \cdot, \cdot)}_{L^2(dxdy)} \lesssim&\ \exp\left( \kappa T\right)\\
 \sup_x\llp{u(t, x, \cdot)}_{L^2(dy)} \lesssim&\ \exp\left( \kappa T\right)\\
  \sup_x\llp{p(t, x, \cdot)}_{L^2(dy)} \lesssim&\ \exp\left( \kappa T\right)
\end{align}
 \end{subequations}
 hold uniformly in $N$ on the interval $[0, T]$. 
\end{cor}

\begin{proof}
 Let us recall that $\ch(k) = \delta + p$ is an invertible operator since $p$ is bounded (semi)positive-definite operator. In fact, we have
 \begin{align}
  \llp{f}_{L^2(dx)}^2 \lesssim \inprod{f}{\ch(k)f} \ \ \implies \ \ \llp{\ch(k)^{-1}}_\text{op} \lesssim 1.
 \end{align}
 Hence, using the identity \eqref{hyptrig-id} and Theorem \ref{sh2-thm}, it follows
 \begin{align*}
  \llp{u(t, \cdot, \cdot)}_{L^2(dxdy)} \lesssim&\ \llp{\ch(k)^{-1}}_\text{op} \llp{\sh(2k)(t,\cdot, \cdot)}_{L^2(dxdy)}\\
  \lesssim&\ \exp\left(\kappa T \right).
  \end{align*}
Likewise, using identity \eqref{hyptrig-id2}, we see that
\begin{align*}
 \llp{p(t, \cdot, \cdot)}_{L^2(dxdy)} \lesssim \llp{u(t,\cdot, \cdot)}_{L^2(dxdy)} \lesssim  \exp\left(\kappa T \right).
\end{align*}

To prove the remaining inequalities, we will use a duality argument. Observe we have that
\begin{align*}
 \llp{u}_{L^\infty(dx)L^2(dy)}  =& \sup_{\llp{f}_{L^1(dx)L^2(dy)}=1}\left|\int dxdy\ u(x, y)f(x, y)\right|\\
 \leq&\ \sup_{\llp{f}_{L^1(dx)L^2(dy)}=1}\left|\int dxdzdy\ \sh(2k)(x,z)\ch(k)^{-1}(z, y)f(x, y)\right|\\
 \leq&\ \sup_{\llp{f}_{L^1(dx)L^2(dy)}=1}\int dx\ \llp{\sh(2k)(x, \cdot)}_{L^2(dz)}\llp{\ch(k)^{-1}}_\text{op}\llp{f(x, \cdot)}_{L^2(dy)}\\
 \leq&\ \sup_x\llp{\sh(2k)(x, \cdot)}_{L^2(dz)}.
\end{align*}
Hence the result follows. The estimate for $p$ follows from the identity $p_2\circ(\delta+\frac{1}{2}p)^{-1} = 4p$. 
\end{proof}

We also have the following corollary of the above corollary.
\begin{cor}\label{sh-strichartz-corollary}
 Let $(\phi(t), k(t))$ solves \eqref{HFB1}. Then we have the following uniform in $N$  endpoint Strichartz estimates
 \begin{subequations}
 \begin{align}
  \llp{u}_{L^2([0, T])L^6(dx)L^2(dy)}\lesssim&\ \exp(\kappa T),\\
  \llp{p}_{L^2([0, T])L^6(dx)L^2(dy)}\lesssim&\ \exp(\kappa T).
 \end{align}
 \end{subequations}
\end{cor}

The proof of Corollary \ref{sh-strichartz-corollary} will be postponed till the end of the section since the crux of the argument follows
from the proof of Theorem \ref{sh2-thm}.

\begin{remark}
 The reader should note that by interpolating \eqref{endpoint1} and \eqref{endpoint2} we recover all
 the Strichartz estimates of the form $L^qL^rL^2$, where $(q, r)$ is an admissible pair, for $\sh(2k)$. Likewise, we have all the Strichartz 
 estimates for $u$ and $p$. 
\end{remark}

\begin{prop}\label{sh-prop-1}
Let $s_a^0$ be the solution to 
\begin{equation}\label{toy}
\begin{aligned}
 \vect{S}(s_a^0) =&\ -2v_N(x-y)\Lambda\\
 s_a^0(0, x, y) =&\ \sh(2k)(0, x, y).
\end{aligned}
\end{equation}
Then we have the estimate
\begin{align*}
 \Lp{s_a^0}{L^\infty([0, T])L^2(dxdy)} + \Lp{s_a^0}{L^2([0, T])L^6(dx)L^2(dy)} \lesssim 1+T^{2}
\end{align*}
for some constant $\kappa$ independent of $T$. Moreover, we also have 
\begin{align}
 \llp{\grad_{x+y}^j s_a^0}_{L^2(dxdy)} + \llp{\grad^j_{x+y}s_a^0}_{L^2([0, T])L^6(dx)L^2(dy)}\lesssim \exp\left(\kappa T \right)
\end{align}
for $j=1, 2$. 
\end{prop}

\begin{lemma}\label{lemma 5.2}
 Let $b>0$ and $c(t)$ is the characteristic function on $[0, T]$ where $T>1$. Suppose we have
 \begin{align}
  \Lambda = \int^\infty_{-\infty}ds\ c(t-s)e^{i(t-s)\lapl_{x, y}} F(s)
 \end{align}
then it follows
\begin{align}
 \llp{\Lambda}_{X^b} \lesssim_b T\llp{F}_{X^{b-1}}.
\end{align}
\end{lemma}

\begin{proof}
 Let $\mathcal{F}$ denote the spacetime Fourier transform, then we see that
 \begin{align*}
  &\mathcal{F}\left(  \int^\infty_{-\infty}ds\ c(t-s)e^{i(t-s)\lapl_{x, y}} F(s)\right)\\
  & = \int^\infty_{-\infty} dt\ e^{-it\tau} \int^\infty_{-\infty} ds\ c(t-s)e^{-i(t-s)(|\xi|^2+|\eta|^2)} \widehat F(s)\\
  & = \hat c(\tau+|\xi|^2+|\eta|^2) \widetilde F(\tau, \xi, \eta).
 \end{align*}
In particular, we have the estimate
\begin{align*}
 |\hat c(\tau+|\xi|^2+|\eta|^2)| \lesssim 
 \left|\frac{\sin(\frac{1}{2}T(\tau+|\xi|^2+|\eta|^2))}{\tau+|\xi|^2+|\eta|^2} \right|\lesssim T\langle \tau+|\xi|^2+|\eta|^2\rangle^{-1}.
\end{align*}
Then the result follows.
\end{proof}

To prove Proposition \ref{sh-prop-1}, it suffices to show the following proposition.
\begin{prop}\label{sh-prop-2}
Suppose $c(t)$ is the characteristic function of $I=[0, T]$ and let
\begin{align}\label{forcing-term}
 E(t, x, y)= \int^\infty_{-\infty} c(t-s)e^{i(t-s)\lapl_{x, y}}(c(s)v_N(x-y)\Lambda(s))\ ds.
\end{align}
Then we have the estimate
\begin{equation}\label{main-sh-est}
\begin{aligned}
  &\Lp{E}{L^\infty(I)L^2(dxdy)}+\Lp{E}{L^2(I)L^6(dx)L^2(dy)}\\
  &\lesssim\ T\sup_z\Lp{(|\bd_t|^{\frac{1}{4}+\varepsilon}
  +|\grad_x|^{\frac{1}{2}+\varepsilon})\Lambda(t,x, x+z)}{L^2(I)L^2(dx)}.
\end{aligned}
\end{equation}
\end{prop}
It is helpful to rewrite 
\begin{align}
 v_N(x-y)\Lambda(s) &= \int dz\ v_N(z)\delta(x-y-z) \Lambda\left(s, \frac{x+y+z}{2}, \frac{x+y-z}{2} \right) \nonumber\\
 &=: \int dz\ v_N(z)\delta(x-y-z)f_z(s, x+y).
\end{align}
Then let us define
\begin{align}\label{Ez}
 E_z(t, x, y) =  \int^\infty_{-\infty}ds\ c(t-s)e^{i(t-s)\lapl}(c(s)\delta(x-y-z)f_z(s, x+y))
\end{align}
and write
\begin{align}
 E(t, x, y) &=  \int dz\ v_N(z) E_z(t, x, y).
\end{align}
The following lemmas provide uniform in $z$ estimates for $E_z$.
\begin{lemma}\label{piece1}
 Fix $z$ and $\delta>0$. Then we have the estimates
 \begin{subequations}
 \begin{align}
  \Lp{|\bd_t|^{\frac{1}{4}+\varepsilon}P_{|\xi-\eta| \gg |\tau+|\xi+\eta|^2|^{\frac{1}{2}}} E_z}{X^{\frac{1}{4}-\delta}}
  \lesssim_\delta&\ T\Lp{|\bd_t|^{\frac{1}{4}+\varepsilon}f_z}{L^2(I)L^2(dx)}\label{sh_est1}\\
    \Lp{|\grad_x+\grad_y|^{\frac{1}{2}+\varepsilon}P_{|\xi-\eta| \gg |\tau+|\xi+\eta|^2|^{\frac{1}{2}}} E_z}{X^{\frac{1}{4}-\delta}}
  \lesssim_\delta&\ T\Lp{|\grad_x|^{\frac{1}{2}+\varepsilon}f_z}{L^2(I)L^2(dx)}. \label{sh_est2}
 \end{align}
 \end{subequations}
\end{lemma}

\begin{proof}
 Applying Lemma \ref{lemma 5.2} to $|\bd_t|^{\frac{1}{4}+\varepsilon}P E_z$ yields
 \begin{align*}
  \Lp{|\bd_t|^{\frac{1}{4}+\varepsilon}P_{1'} E_z}{X^{\frac{1}{4}-\delta}}\lesssim  T
  \Lp{|\bd_t|^{\frac{1}{4}+\varepsilon}(P_{1'} c\delta(x-y-z)f_z(s, x+y))}{X^{-\frac{3}{4}-\delta}}
 \end{align*}
 where $P_{1'}$ is a short hand for the projection operator $P_{|\xi-\eta| \gg |\tau+|\xi+\eta|^2|^{\frac{1}{2}}}$.

Directly computing the spacetime Fourier transform of the RHS gives us
\begin{align*}
 &\Lp{|\bd_t|^{\frac{1}{4}+\varepsilon}(P_{1'}c\delta(x-y-z)f_z(s, x+y))}{X^{-\frac{3}{4}-\delta}}^2\\
 & \lesssim \int_{|\xi-\eta|\gg |\tau+|\xi+\eta|^2|^{\frac{1}{2}}} d\tau d\xi d\eta\ 
 \frac{|\tau|^{\frac{1}{2}+2\varepsilon}|\widetilde{cf_z}(\tau, \xi+\eta)|^2}{\langle \tau
 +|\xi+\eta|^2+|\xi-\eta|^2 \rangle^{\frac{3}{2}+2\delta}}\\
 &\lesssim \int d\tau d(\xi+\eta)\ |\tau|^{\frac{1}{2}+2\varepsilon}|\widetilde{cf_z}(\tau, \xi+\eta)|^2 \int^\infty_{2a} \frac{\rho^2 d\rho}{\langle \tau
 +|\xi+\eta|^2+\rho^2 \rangle^{\frac{3}{2}+2\delta}}
\end{align*}
where $a := |\tau+|\xi+\eta|^2|^{\frac{1}{2}}$. Hence it remains to evaluate the integral in $\rho$. There are two cases:
either $\tau+|\xi+\eta|^2>0$ or $\tau+|\xi+\eta|^2<0$. Both cases are essentially the same. In the latter case,
we have 
\begin{align*}
 \int^\infty_{2a} \frac{\rho^2 d\rho}{(1+|a^2-\rho^2|)^{\frac{3}{2}+2\delta}} \lesssim  \int^\infty_{a} \frac{\rho^2 d\rho}{(1+\rho^2)^{\frac{3}{2}+2\delta}} \lesssim_\delta 1.
\end{align*}
The proof of \eqref{sh_est2} is similar. 
\end{proof}
In the region $|\xi-\eta|\lesssim a$, we further subdivide into the cases $\tau+|\xi+\eta|^2>0$ and
$\tau+|\xi+\eta|^2<0$. 
\begin{lemma}\label{piece2}
Fix $z$ and $\delta>0$. Then we have the estimate
\begin{subequations}
 \begin{align}
  \Lp{|\bd_t|^{\frac{1}{4}+\varepsilon}P_{\{|\xi-\eta| \lesssim a\}\cap\{\tau+|\xi+\eta|^2>0\}} E_z}{X^{\frac{1}{4}-\delta}}
  \lesssim&\ T\Lp{|\bd_t|^{\frac{1}{4}+\varepsilon}(cf_z)}{L^2(dtdx)}.\\
    \Lp{|\grad_x+\grad_y|^{\frac{1}{2}+\varepsilon}P_{\{|\xi-\eta| \lesssim a\}\cap\{\tau+|\xi+\eta|^2>0\}} E_z}{X^{\frac{1}{4}-\delta}}
  \lesssim&\ T\Lp{|\grad_x|^{\frac{1}{2}+\varepsilon}(cf_z)}{L^2(dtdx)}.
 \end{align}
 \end{subequations}
\end{lemma}

\begin{proof}
 The proof is similar to the proof of Lemma \ref{piece1}. It suffices to just bound the integral
 \begin{align*}
  \int^{2a}_0 \frac{\rho^2 d\rho}{(1+a^2+\rho^2)^{\frac{3}{2}+2\delta}} \lesssim \int^{2a}_0 \frac{\rho^2 d\rho}{(1+a^2)^{\frac{3}{2}}} \lesssim 1.
 \end{align*}
 Hence we have
 \begin{align*}
  \Lp{|\bd_t|^{\frac{1}{4}+\varepsilon}P_{1''} E_z}{X^{\frac{1}{4}-\delta}} \lesssim\ T\Lp{|\bd_t|^{\frac{1}{4}+\varepsilon}(cf_z)}{L^2(dtdx)}
 \end{align*}
where $P_{1''} := P_{\{|\xi-\eta| \lesssim a\}\cap\{\tau+|\xi+\eta|^2>0\}}$. 
\end{proof}

\begin{lemma}\label{piece3}
Fix $z$ and $0<\delta<\frac{1}{2}$. Then we have the estimate
 \begin{align}
  \Lp{P_{\{|\xi-\eta| \lesssim a\}\cap\{\tau+|\xi+\eta|^2<0\}} E_z}{X^{\frac{1}{2}+\delta}}
  \lesssim_\delta T\Lp{(|\bd_t|^{\frac{1}{4}+\varepsilon}+|\grad_x|^{\frac{1}{2}+\varepsilon})(cf_z)}{L^2(dtdx)}.
 \end{align}
\end{lemma}

\begin{proof}
 Again, applying Lemma \ref{lemma 5.2} yields
 \begin{align*}
  \Lp{P_2 E_z}{X^{\frac{1}{2}+\delta}}\lesssim\  
  T\Lp{P_2c\delta(x-y-z)f_z(s, x+y)}{X^{-\frac{1}{2}+\delta}}
 \end{align*}
where $P_2$ denotes the projection operator $P_{\{|\xi-\eta| \lesssim a\}\cap\{\tau+|\xi+\eta|^2<0\}}$.

Take the spacetime Fourier transform of the RHS gives us
\begin{align*}
 &\Lp{P_2c\delta(x-y-z)f_z(s, x+y)}{X^{-\frac{1}{2}+\delta}}^2 \\
 & \lesssim \int_{|\xi-\eta|\lesssim a} d\tau d\xi d\eta\ \frac{|\widetilde{cf_z}(\tau, \xi+\eta)|^2}{\langle \tau
 +|\xi+\eta|^2+|\xi-\eta|^2 \rangle^{1-2\delta}}\\
 &\lesssim \int d\tau d(\xi+\eta)\ |\widetilde{cf_z}(\tau, \xi+\eta)|^2 \int^{2a}_0 \frac{\rho^2 d\rho}{(1+|a^2-\rho^2|)^{1-2\delta}}.
\end{align*}
Hence, it suffices to estimate the integral in $\rho$. Observe we have 
\begin{align*}
\int^{2a}_0 \frac{\rho^2 d\rho}{(1+|a^2-\rho^2|)^{1-2\delta}} \lesssim \int^a_0 \frac{a^2 du}{(1+au)^{1-2\delta}} \lesssim a^{1+4\delta}
\end{align*}
which completes our proof of the lemma. 
\end{proof}

\begin{proof}[Proof of Proposition \ref{sh-prop-2}]

Write $E = P_1E + P_2E$ where $P_1 = P_{1'} +P_{1''}$. Then, by Minkowski and Sobolev inequalites, we have
\begin{align*}
 \Lp{cE}{L^\infty(dt)L^2(dxdy)}\lesssim&\ (\int v)\sup_z\llp{|\bd_t|^{\frac{1}{4}+\varepsilon}c(t)P_1E_z}_{L^{4-\varepsilon}(dt)L^2(dxdy)}\\
 &+(\int v)\sup_z\llp{c(t)P_2 E_z}_{L^\infty(dt) L^2(dxdy)}.
\end{align*}
Now, by Lemma \ref{lemma 4.1}-\ref{lemma 4.2} (c.f. Lemma 2.9 in \cite{Tao}), we have the estimate
\begin{align*}
  \Lp{cE}{L^\infty(dt)L^2(dxdy)}\lesssim&\ \sup_z\llp{|\bd_t|^{\frac{1}{4}+\varepsilon}cP_1E_z}_{X^{\frac{1}{4}-\delta}}
  + \sup_z\llp{cP_2 E_z}_{X^{\frac{1}{2}+\delta}}
\end{align*}
where $\delta = \frac{\varepsilon}{4(4-\varepsilon)}$. Finally, combining Lemma \ref{piece1}-\ref{piece3}, we obtain 
the desired estimate
\begin{align*}
 \Lp{cE}{L^\infty(dt)L^2(dxdy)} \lesssim T\sup_z\Lp{|i\bd_t-\lapl_x|^{\frac{1}{4}+\varepsilon}c\Lambda(t,x, x+z)}{L^2(dtdx)}.
\end{align*}

For the other endpoint, we use ``Sobolev at an angle'' (c.f. Lemma 8.3 in \cite{GM4}) then it follows 
\begin{align*}
 \Lp{cE}{L^2(dt)L^6(dx)L^2(dy)} \lesssim&\ \llp{\langle\grad_x+\grad_y\rangle^{\frac{1}{2}}cP_1E}_{L^2(dt)L^3(dx)L^2(dy)}\\
 &\ + \llp{cP_2E}_{L^2(dt)L^6(dx)L^2(dy)}\\
 \lesssim&\ \llp{\langle\grad_x+\grad_y\rangle^{\frac{1}{2}}cP_1E}_{X^{\frac{1}{4}-\delta}}+\llp{cP_2E}_{X^{\frac{1}{2}+\delta}}.
\end{align*}
Then estimate \eqref{main-sh-est} follows.  
\end{proof}

Next, we include some potential terms from \eqref{sh2-eq} to \eqref{toy}. Let us start with the following lemma.

\begin{lemma}\label{bounded-pot-lem}
Consider the ``potential" operator with kernel
\begin{equation}
\begin{aligned}
V(u)(x, y) :=&\ ((v_N\ast\diag\Gamma)(x) + (v_N\ast\diag\Gamma)(y))u(x, y)\\
&\ +(v_N\Gamma)\circ u(x, y)+u\circ(v_N\Gamma)(x, y). 
\end{aligned}
\end{equation}
Let us also define 
\begin{subequations}
\begin{align}
(\bd_x^j V)(u):=&\  ((v_N\ast \bd^j\diag\Gamma)(x)+(v_N\ast \diag\Gamma)(y))u\\
&\ - v_N\Pi^j(\Gamma)\circ u-u\circ v_N\Gamma,\nonumber\\
(\bd_y^j V)(u):=&\  ((v_N\ast \diag\Gamma)(x)+(v_N\ast \bd^j\diag\Gamma)(y))u\\
&\ - v_N\Gamma\circ u-u\circ v_N\Pi^j(\Gamma)\nonumber
\end{align}
where $\Pi^j(\Gamma)(x, y):=(\grad_{x+y}^{j} \Gamma)(x, y).$ 
\end{subequations}
Then, for $j\ge 0$ and interval $I=[T_0, T_1]$, we have the following uniform in $N$ estimates
\begin{align}
 \llp{(\bd^j V)(u)}_{L^1(I)L^2(dxdy)} \lesssim&\ \vect{N}_I(\grad_{x+y}^j\Gamma)\llp{u}_{L^2(I)L^6(dx)L^2(dy)},\label{pot-bound-est}\\
 \llp{(\bd^j V)(u)}_{L^2(I)L^\frac{6}{5}(dx)L^2(dy)} \lesssim&\ \vect{N}_I(\grad_{x+y}^j\Gamma)\llp{u}_{L^\infty(I)L^2(dxdy)}. \label{pot-bound-est2}
\end{align}
\end{lemma}

\begin{proof}
It suffices to consider the case $j=1$. Moreover,  it also suffices to handle the terms 
$(v_N\ast \bd^j \diag \Gamma(x)) u$ and $v_N\Pi^j(\Gamma)\circ u$. 
For the first term, we apply H\"older, Young, and Sobolev inequalities to get
\begin{align*}
 &\int_I dt\ \llp{(v_N\ast \bd^j\diag\Gamma)(x)u}_{L^2(dxdy)}\\
 &\lesssim\ \int_I  dt\ \llp{\grad_x^{\frac{1}{2}}\diag(\grad_{x+y}^j\Gamma)}_{L^2(dx)}\llp{u}_{L^6(dx)L^2(dy)}\\
 &\lesssim\ \vect{N}_I(\grad_{x+y}^j \Gamma)\llp{u}_{L^2(I)L^6(dx)L^2(dy)}
\end{align*}
For the other term, we have
\begin{align*}
 & \int_I dt\ \llp{(v_N\Pi^j(\Gamma))\circ u}_{L^2(dxdy)}\\
 &\lesssim\int_I dt\ \int dz\ v_N(z) \llp{\Pi^j(\Gamma)(x, x-z)}_{L^3(dx)}\llp{u}_{L^6(dx)L^2(dy)} \\
  &\lesssim \int_I  dt\ \int dz\ v_N(z) \llp{\grad^{\frac{1}{2}}_x\Pi^j(\Gamma)(x, x-z)}_{L^2(dx)}\llp{u}_{L^6(dx)L^2(dy)}\\&\lesssim  \vect{N}_{I}(\grad_{x+y}^j \Gamma)\llp{u}_{L^2(I)L^6(dx)L^2(dy)}
\end{align*}
 Thus, \eqref{pot-bound-est} follows immediately. 
 
 Similarly, by  H\"older, Young, and Sobolev inequalities, we see that
\begin{align*}
 &\Lp{ (v_N\ast\bd^j \diag\Gamma)(x) u}{L^2(I)L^\frac{6}{5}(dx)L^2(dy)} \\
 &\lesssim\  \Lp{ (v_N\ast \diag(\grad_{x+y}^j\Gamma)(x)}{L^2(I)L^3(dx)}\llp{u}_{L^\infty(I)L^2(dxdy)}\\
 &\lesssim\ \vect{N}_{I}(\grad_{x+y}^j \Gamma) \llp{u}_{L^\infty(I)L^2(dxdy)}.
 \end{align*}
For the other term, we see that
\begin{align*}
 &\llp{(v_N\Pi^j(\Gamma))\circ u}_{L^2(I)L^{\frac{6}{5}}(dx)L^2(dy)}\\
 &\lesssim \int dz\ v_N(z)\llp{\Pi^j(\Gamma)(x, x-z)u(x-z, y)}_{L^2(I)L^{\frac{6}{5}}(dx)L^2(dy)}\\
 &\lesssim \vect{N}_I(\grad_{x+y}^j \Gamma) \llp{u}_{L^\infty(I)L^2(dxdy)}.
\end{align*}
Then \eqref{pot-bound-est2} follows. 
\end{proof}

\begin{prop}\label{sh_prop_3}
 Let $s_a$ be a solution to 
 \begin{equation}\label{toy2}
\begin{aligned}
 \vect{\tilde S}(s_a) =&\ -2v_N(x-y)\Lambda\\
 s_a(0, x, y) =&\ \sh(2k)(0, x, y)
\end{aligned}
\end{equation}
where $\vect{\tilde S} = \vect{S}+V$ and $V$ as in the above lemma.
Then the following estimate holds
\begin{align}\label{sh_energy_est}
 \Lp{s_a}{L^\infty([0, T])L^2(dxdy)}\lesssim 1+ T^{3}.
\end{align}
Moreover, for every $j\ge 1$, there exists $\kappa>0$ such that we have the etimate 
\begin{align}
 \llp{\grad_{x+y}^j s_a}_{L^2(dxdy)} \lesssim_j \exp\left(\kappa T \right).
\end{align}
\end{prop}

\begin{proof}
 The idea is to write $s_a = s_a^0+s_a^1$ where $s_a^0$ is a solution to \eqref{toy}. Then we see that $s_a^1$ satisfies
 the Cauchy problem 
 \begin{equation}\label{toy3}
 \begin{aligned}
  &\vect{\tilde S}(s_a^1) = -V(s_a^0)\\
  & s_a^1(0, x, y) = 0.
 \end{aligned}
\end{equation}
To obtain energy estimate for $s_a^1$, we employ the identity
\begin{align}
 \vect{S}_\pm(s_a^1\circ \overline{s_a^1})+V(s_a^1)\circ \overline{s_a^1} - s_a^1\circ \overline{V(s_a^1)} = -V(s_a^0)\circ \overline{s_a^1}+s_a^1\circ \overline{V(s_a^0)}
\end{align}
which means
\begin{align*}
\frac{d}{dt}\llp{s_a^1(t)}_{L^2(dxdy)}^2  \leq \left|\Tr\left(  \vect{S}_\pm(s_a^1\circ \overline{s_a^1})+V(s_a^1)\circ \overline{s_a^1} - s_a^1\circ \overline{V(s_a^1)}\right)\right|.
\end{align*}
Then it follows that
\begin{align*}
\sup_{t\in [0, T]}\llp{s_a^1(t)}_{L^2(dxdy)} \lesssim \int^T_0 dt\ \llp{V(s_a^0(t))}_{L^2(dxdy)}.
\end{align*}
By Lemma \ref{bounded-pot-lem}, Proposition \ref{4.3}, and Proposition \ref{sh-prop-1}, we obtain \eqref{sh_energy_est}.

The case of $\grad_{x+y}^j s_a$ is similar. We begin by writing the equation
\begin{align}
 \vect{\tilde S}(\grad_{x+y}^j s_a^1) = [V, \grad_{x+y}^j]s_a^1 -\grad_{x+y}^j(V(s_a^0)).
\end{align}
Again, by energy method, we have the estimate
\begin{align*}
\llp{\grad_{x+y}^j s_a^1}_{L^\infty([0, T])L^2(dxdy)}\lesssim&\ \int^T_0 dt\ 
\Lp{[V, \grad_{x+y}^j]s_a^1}{L^2(dxdy)}\\
&+\int^T_0 ds\ \Lp{\grad_{x+y}^jV(s_a^0)}{L^2(dxdy)}.
\end{align*}
The forcing term $\grad_{x+y}^j(V(s_a^0))$ is handled in the same manner as above. For the commutator term, we have 
the estimate
\begin{align*}
 \int^T_0 dt\ 
\llp{[V, \grad_{x+y}^j]s_a^1}_{L^2(dxdy)} \lesssim&\  \int^T_0 dt\ 
\Lp{(\bd^j V) (s_a^1)}{L^2(dxdy)}.
\end{align*}
Hence, by Lemma \ref{bounded-pot-lem} and \eqref{sh_strichartz} in the following corollary, we obtain the desired result.
\end{proof}

\begin{cor}
 Suppose $s_a$ is a solution to \eqref{toy2}. Then the following estimate holds
 \begin{align}\label{sh_strichartz}
  \llp{s_a}_{L^2([0, T])L^6(dx)L^2(dy)} \lesssim 1+T^{4}.
 \end{align}
 Moreover, for $j\ge 1$ we also have 
\begin{align}
 \llp{\grad_{x+y}^j s_a}_{L^2([0,T])L^6(dx)L^2(dy)} \lesssim_j \exp\left(\kappa T \right).
\end{align}
\end{cor}
\begin{proof}
Let us once again reduce the problem to \eqref{toy3}. By the endpoint Strichartz estimates, we have
\begin{align*}
 &\Lp{s_a^1}{L^2([0, T])L^6(dx)L^2(dy)} \\
 &\lesssim\ (\text{data}) + \Lp{V(s_a^0)}{L^2([0, T])L^{\frac{6}{5}}(dx)L^2(dy)}+ \Lp{V(s_a^1)}{L^2([0, T])L^{\frac{6}{5}}(dx)L^2(dy)}.
\end{align*}
By Lemma \ref{bounded-pot-lem}, Proposition \ref{sh-prop-1}, and \eqref{sh_energy_est}, we establish estimate
\eqref{sh_strichartz}.
The proof for the higher derivatives are similar. 
\end{proof}

\begin{proof}[Proof of Theorem \ref{sh2-thm}]
 Let us write $\sh(2k):= s_2 = s_a + s_e$ where $s_a$ satisfies \eqref{toy2}. Then we have the coupled system 
 \begin{subequations}
 \begin{align}
    \vect{\tilde S}(s_e) &=\ -(v_N\Lambda)\circ p_2 -\bar p_2 \circ (v_N\Lambda)\\
    \vect{\tilde S}_\pm(\bar p_2) &=\ -(v_N\Lambda)\circ \bar s_a + s_a \circ (v_N\Lambda) \label{p2-eq}\\
    &\quad\  -(v_N\Lambda)\circ \bar s_e + s_e \circ (v_N\Lambda) \nonumber\\
    &=:\ M -(v_N\Lambda)\circ \bar s_e + s_e \circ (v_N\Lambda) \nonumber
 \end{align}
 \end{subequations}
 where $\vect{\tilde S}_\pm = \vect{S}_\pm + V_\pm$ and 
 \begin{align*}
 V_\pm(p) = ((v_N\ast\diag\Gamma)(x) - (v_N\ast\diag\Gamma)(y))p+(v_N\Gamma)\circ p-p\circ(v_N\Gamma).
 \end{align*}
Let $I=[0, T]$.  By Strichartz estimates, we have
\begin{align*}
\llp{p_2}_{L^\infty(I)L^2(dxdy)} \lesssim&\ (\text{data})+ \llp{V_\pm (p_2)}_{L^2(I)L^\frac{6}{5}(dx)L^2(dy)}+
 \llp{M}_{L^2(I)L^\frac{6}{5}(dx)L^2(dy)}\\
 &\ +\left(\sup_z \int_I dt\ \llp{\Lambda(t, x-z, x-z)}_{L^3(dx)}^2\llp{s_e(t)}_{L^2(dxdy)}^2\right)^\frac{1}{2}.
\end{align*}
Applying Lemma \ref{bounded-pot-lem}, we have that
\begin{align}
\llp{p_2}_{L^\infty(I)L^2(dxdy)} \lesssim&\ (\text{data}) +T^4\\
&\ + \vect{N}_T(X)\left(\llp{p_2}_{L^\infty(I)L^2(dxdy)}+
\llp{s_e}_{L^\infty(I)L^2(dxdy)}\right)\nonumber
\end{align}
since $\llp{M}_{L^{2}([0, T])L^{\frac{6}{5}}(dx)L^2(dy)} \lesssim 1+T^{4}$. Likewise, we see that
\begin{align*}
 &\llp{s_e}_{L^\infty(I)L^2(dxdy)}\\
 &\lesssim (\text{data}) + \llp{V (s_e)}_{L^2(I)L^\frac{6}{5}(dx)L^2(dy)}\\
 &\quad +\left(\sup_z \int_I dt\ \llp{\Lambda(t, x-z, x-z)}_{L^3(dx)}^2\llp{p_2(t)}_{L^2(dxdy)}^2\right)^\frac{1}{2}\\
 &\lesssim (\text{data})+\vect{N}_T(X)\left(\llp{p_2}_{L^\infty(I)L^2(dxdy)}+
\llp{s_e}_{L^\infty(I)L^2(dxdy)}\right)
\end{align*}

Next, define
\begin{align*}
 E(t) = \llp{s_e}_{L^\infty([0, t])L^2(dxdy)}+\llp{p_2}_{L^\infty([0, t])L^2(dxdy)}.
\end{align*}

Putting the estimates for $p_2$ and $s_e$ together, we have
\begin{equation}
    E(T)\lesssim (\text{data})+T^{\alpha}+N_{T}(X)E(T),
\end{equation}
where $\alpha = 4$. Then we will apply the above estimate on small intervals together with Proposition \ref{4.3}. For convenience, we use $C_1>0$ to represent the implicit constant in the above estimate and we use $C_2>0$ to represent the implicit constant in the estimate of Proposition \ref{4.3}. Here we divide the time interval $[0,T]$ into $l$ consequent subintervals $\{I_i\}_{i}$ where $I_i=[T_i,T_{i+1}]$ satisfying $N_{I_i}(X)\leq \frac{1}{2C_1}$ and $l \sim 2 C_1 C_2 T$. \vspace{3mm}

For a certain interval (without loss of generality, we consider the first interval as an example), we have
  \begin{equation}
     E(T_2)\leq C_1 E(T_1)+C_1+C_1 \cdot \frac{1}{2C_1}E(T_2).
 \end{equation}
 This implies,
   \begin{equation}
     E(T_2)\leq 2C_1 E(T_1)+2C_1.
 \end{equation}
Then we can iterate the arguments on all of the intervals, noticing the number of intervals are around $l \sim 2 C_1 C_2 T$, we have, for some constant $\kappa$,
\begin{align*}
    E(T)=E(T_l)&\leq 2C_1 E(T_{l-1})+2C_1  \leq  2C_1(2C_1E(T_{l-2})+2C_1) +2C_1\\
    & \leq ...\leq (2C_1)^{2C_1C_2T} E(T_1)+\frac{(2C_1)^{2C_1C_2T+1}-1}{2C_1-1}      \lesssim e^{\kappa T}.
\end{align*}
The proof of \eqref{endpoint2} follows from \eqref{endpoint1}.
\end{proof}

Let us complete this section with the proof of Corollary \ref{sh-strichartz-corollary}. We start by proving the following lemma. 

\begin{lemma}\label{w(ch)-est}
Suppose $(\phi(t), k(t))$ solves \eqref{HFB1}. Then there exists some $\kappa>0$ such that following estimates
\begin{subequations}
 \begin{align}
 \llp{\vect{\tilde W}(\ch(2k_t))}_{L^2(dxdy)} \lesssim& \exp(\kappa T)\\
  \llp{\vect{\tilde W}(\ch(k_t))}_{L^2(dxdy)} \lesssim& \exp(\kappa T)
 \end{align}
 \end{subequations}
holds for all $t \in [0, T]$ and independent of $N$. 
\end{lemma}

\begin{proof}
 By \eqref{compact-ch-eq}, Theorem \ref{sh2-thm}, and Proposition \ref{4.9}, we have the estimate
 \begin{align*}
  \llp{\vect{\tilde W}(\ch(2k))}_{L^2(dxdy)} \lesssim \vect{N}_T(\grad_{x+y}^2\Lambda)\llp{\sh(2k)}_{L^\infty([0, T])L^2(dxdy)}
  \lesssim \exp(\kappa T).
 \end{align*}
 
To estimate $\vect{\tilde W}(\ch(k))$, we invoke a variant of identity (30) in \cite{GMM2}, which is an immediate consequence of holomorphic functional calculus, 
\begin{align}
  \vect{\tilde W}(\ch(k)) = \frac{1}{4\pi i} \int_\gamma (u\circ\bar u-z)^{-1} \vect{\tilde W}(\ch(2k))(u\circ\bar u-z)^{-1} \sqrt{1+z}\ dz
\end{align}
where the contour $\gamma$ encloses the spectrum of $u\circ \bar u$  and contains inside the ball of radius $C\llp{u}^2_{L^2(dxdy)}$ for some $C>0$, 
i.e. $|z| \leq C\llp{u}^2_{L^2(dxdy)}$. 
Since $(u\circ\bar u-z)^{-1}$ has bounded operator norm, then it follows from Corollary \ref{sh-corollary}
\begin{align*}
  \llp{ \vect{\tilde W}(\ch(k))}_{L^2(dxdy)} \lesssim&\ 
  |z|^{\frac{3}{2}}\llp{(u\circ \bar u-z)^{-1}}_\text{op}^2\llp{ \vect{\tilde W}(\ch(2k))}_{L^2(dxdy)} \\
  \lesssim&\ \exp\left(\alpha T \right)
\end{align*}
for some constant $\alpha>0$. 
\end{proof}

\begin{proof}[Proof of Corollary \ref{sh-strichartz-corollary}]
 Rewrite \eqref{p2-eq} with $p_2= 2\bar u \circ u = 2p \circ p+4p$ in the differential operator yields
 \begin{equation}
 \begin{aligned}
  \vect{\tilde W}(p) =&\ -\frac{1}{2}\vect{\tilde W}(p\circ p) -\frac{1}{4}[v_N\Lambda, \overline{\sh(2k)}] \\
  =&\ -\frac{1}{2}\vect{\tilde W}(p)\circ p-\frac{1}{2}p\circ \vect{\tilde W}(p)-\frac{1}{4}[v_N\Lambda, \overline{\sh(2k)}] .
 \end{aligned}
 \end{equation}
 Then, by the endpoint Strichartz estimates, we see that
 \begin{align*}
  &\llp{p}_{L^2([0, T])L^6(dx)L^2(dy)}\\
  &\lesssim \llp{V_\pm(p)}_{L^1([0, T])L^2(dxdy)}+\llp{\vect{\tilde W}(p)\circ p}_{L^1([0, T])L^2(dxdy)}\\
  &\quad +\llp{v_N\Lambda\circ \overline{\sh(2k)}}_{L^2([0, T])L^{\frac{6}{5}}(dx)L^2(dy)}\\
  &\lesssim T\vect{\dot N}_T(\Gamma)\llp{p}_{L^\infty([0, T])L^2(dxdy)}+T\llp{\vect{\tilde W}(p)}_{L^\infty([0, T])L^2(dxdy)}\llp{p}_{L^\infty([0, T])L^2(dxdy)}\\
  &\quad +\vect{N}_T(\Lambda)\llp{\sh(2k)}_{L^\infty([0, T])L^2(dxdy)}.
 \end{align*}
Hence the desired estimate follows from Proposition \ref{4.3}, Theorem \ref{sh2-thm}, Corollary \ref{sh-corollary} and  Lemma \ref{w(ch)-est}.

Next, since $\sh(2k) = 2u+2\bar p\circ u$, then it follows
\begin{align*}
 \llp{u}_{L^2([0, T])L^6(dx)L^2(dy)}\lesssim&\  \llp{\sh(2k)}_{L^2([0, T])L^6(dx)L^2(dy)}\\
 &\ + \llp{p}_{L^2([0, T])L^6(dx)L^2(dy)}\llp{u}_{L^\infty([0, T])L^2(dxdy)}
\end{align*}
which again yields the desired estimate. 
\end{proof}

\section{Fock Space Estimates}
 The purpose of this section is to obtain global estimates for the error 
\begin{align}\label{error}
\Lp{\psi_\text{exact}(t)-e^{i\int X_0(t)\ dt}\psi_\text{appr}(t)}{\mathcal{F}}.
\end{align}
Using the identity
\begin{align}
\Lp{\psi_\text{exact}(t)-e^{i\int X_0(t)\ dt}\psi_\text{appr}(t)}{\mathcal{F}} = \Lp{e^{-i\int X_0(t)\ dt}\psi_\text{red}(t)-\Omega}{\mathcal{F}},
\end{align}
it suffices to study the error of the reduced dynamics which we denote by $E:=e^{-i\int X_0(t)\ dt}\psi_\text{red}(t)-\Omega$. 

Recall $E$ satisfies the equation
\begin{align}\label{inhomo2}
\left(\frac{1}{i}\frac{\bd}{\bd t}-\mathcal{H}_\text{red} + X_0 \right)E = (0, 0, 0, X_3, X_4, 0, \ldots) =:\widetilde X
\end{align}
with initial condition $E(0, \cdot) = 0$. For convenience, we adopt the notation
\begin{align}
\vect{S}_F:= \frac{1}{i}\frac{\bd}{\bd t}-\mathcal{H}_\text{red} + X_0 =: \vect{S}_D-\mathcal{P}
\end{align}
where
\begin{align}
\vect{S}_D :=&\  \frac{1}{i}\frac{\bd}{\bd t}- \int a_x^\ast \lapl  a_x- \frac{1}{2N}\int dxdy\ \left\{ v_N(x-y)a_x^\ast a_y^\ast a_x a_y\right\}.
\end{align}
Here we see that $\mathcal{P}$ accounts for all the remaining terms, that is, $\mathcal{P}= \mathcal{H}_\text{red}-X_0-\mathcal{H}$. Note that $\mathcal{P}$ is
not a diagonal Fock space operator. 
\subsection{Explicit Form of $\mathcal{P}$} Let us write down
$\mathcal{H}_\text{red}$ and $\mathcal{P}$ in order to
study \eqref{inhomo2}. The reduced Hamiltonian was first calculated in section 
5 of \cite{GM2} and was later rewritten using Wick's Theorem in section 7 of \cite{GM3}.
Let first us recall the conjugation relations
\begin{subequations}
\begin{align}
 e^{\mathcal{B}(k)}a_xe^{-\mathcal{B}(k)} =& \int dy\ \{\overline{\ch(k)}(x, y) a_y + \sh(k)(x, y)a^\ast_y\} \nonumber\\
 =&\ a(\overline{\ch(k)}(x, \cdot)) + a^\ast(\sh(k)(x, \cdot))=: b_x\\
 e^{\mathcal{B}(k)}a^\ast_x e^{-\mathcal{B}(k)} =& \int dy\ \{\overline{\sh(k)}(x, y)a_y + \ch(k)(x, y) a^\ast_y\} \nonumber\\
 =&\ a(\overline{\sh(k)}(x, \cdot)) + a^\ast(\ch(k)(x, \cdot))=: b^\ast_x. 
\end{align}
\end{subequations}
We will also adopt the notation $a^\ast_x(u):= a^\ast(\sh(k)(x, \cdot))$ and $a_x(\bar u):= a(\overline{\sh(k)}(x, \cdot))$.  Then the reduced Hamiltonian is given by
\begin{subequations}
\begin{align}
 \mathcal{H}_\text{red} =& -N\mu(t) \\
 & + N^\frac{1}{2} \int dx\ \{\tilde h(\phi(t, x))b^\ast_x + \overline{\tilde{h}}(\phi(t, x))b_x\}\label{linear-reduced}\\
 & -\mathcal{H}_g- \nor\left\{\mathcal{I}\begin{pmatrix}w^T & \bar f\\ - f & -w\end{pmatrix}\right\} \label{normal-2nd}\\
 & - \frac{1}{\sqrt{N}} \nor\left\{\int dxdy\  v_N(x-y)(\bar\phi(y)b^\ast_x b_x b_y + \phi(y)b^\ast_y b^\ast_x b_x) \right\}\label{normal-3rd}\\
 & - \frac{1}{2N}\nor\left\{\int dxdy\ v_N(x-y) b^\ast_x b^\ast_y b_x b_y \right\} \label{normal-4th}
\end{align}
\end{subequations}
where $\nor\{\cdot\}$ refers to the normal ordering the the operators with respect to $(a, a^\ast)$. 
In the linear term \eqref{linear-reduced},  $\tilde h$ corresponds to the modified Hartree operator given by
\begin{align*}
 \tilde h(\phi(t, x)):=\vect{S}\phi_t(x_1)+\int dy \left\{(v_N\Lambda_t)(x_1, y)\bar\phi_t(y)+\frac{1}{N}\tilde\alpha^T (t, x_1, y)\phi_t(y) \right\}.
\end{align*}
For the quadratic term \eqref{normal-2nd}, $\mathcal{I}(\cdot)$ is the Lie algebra isomorphism between symplectic matrices of integral operators
and and quadratic polynomials in $(a, a^\ast)$ used in \cite{GMM, GMM2, GM2}, which is given by
\begin{align}
 \mathcal{I}\begin{pmatrix} w^T & \bar{f}\\ -f & - w\end{pmatrix}= 
 -\frac{1}{2} \int dxdy\left\{ w(y, x)a_xa^\ast_y + w(x, y)a^\ast_x a_y 
 -f(x, y) a^\ast_x a^\ast_y -\bar{f}(x, y)a_x a_y \right\} 
\end{align}
where
\begin{subequations}
\begin{align}
 f :=& \left(\vect{\tilde S}(\sh(k))+\overline{\ch(k)}\circ (v_N\Lambda) \right)\circ \ch(k)
 -\left(\vect{\tilde W}(\overline{\ch(k)})-\sh(k)\circ (\overline{v_N\Lambda}) \right)\circ \sh(k)\\
 w :=& \left(\vect{\tilde W}(\overline{\ch(k)})-\sh(k)\circ (\overline{v_N\Lambda}) \right)\circ \overline{\ch(k)}-
  \left(\vect{\tilde S}(\sh(k))+\overline{\ch(k)}\circ (v_N\Lambda) \right)\circ \overline{\sh(k)}
\end{align}
\end{subequations}
and
\begin{align}
 \mathcal{H}_g = \int dxdy\ g(x, y) a^\ast_x a_y 
\end{align}
where $g$ is as in \eqref{g-function}. In particular, equations \eqref{abstract-eq} correspond to $\tilde h(\phi(t, x)) = 0$ 
and $f =0$; see Theorem 7.1 in \cite{GM1}. Hence it follows that
\begin{subequations}\label{off-diagonal-fock-op}
\begin{align}
 \mathcal{P}  =&\ -\mathcal{H}_g-\mathcal{H}_0+\nor\left\{\mathcal{I}\begin{pmatrix}w^T & 0\\ 0 & - w\end{pmatrix}\right\} 
 \label{quadratic-terms}\\
 & - \frac{1}{\sqrt{N}} \nor\left\{\int dxdy\  v_N(x-y)(\bar\phi(y)b^\ast_x b_x b_y + \phi(y)b^\ast_y b^\ast_x b_x) \right\} \label{cubic-terms}\\
 & - \frac{1}{2N}\nor\left\{\int dxdy\ v_N(x-y)(b^\ast_x b^\ast_y b_x b_y-a^\ast_xa^\ast_y a_x a_y)  \right\} \label{quartic-terms}.
\end{align}
\end{subequations}

It is convenient to list out the terms in the normal ordering. Let us start the with quadratic terms $\mathcal{Q}:=-\eqref{quadratic-terms}$. 
Using the fact that $f=0$, we can rewrite $\mathcal{Q}$ as follows
\begin{subequations}
\begin{align}
 \mathcal{Q} =&\ \int dxdy\ \{(v_N\ast\rho_\Gamma)(t, x)\delta(x-y)-(v_N\Gamma)(t, x, y)\}a^\ast_x a_y \label{quadratic-part1}\\
 &\  \int dxdy\ w(x, y) a^\ast_x a_y \label{quadratic-part2}
\end{align}
\end{subequations}
where
\begin{align}
 w = \left(\vect{\tilde W}(\overline{\ch(k)}) - \sh(k)\circ \overline{(v_N\Lambda)}\right)\circ \overline{\ch(k)}^{-1}.
\end{align}
Next, we write down the cubic terms $\mathcal{C}:=-\eqref{cubic-terms}$ into $ \mathcal{C}=\mathcal{C}_1+\mathcal{C}_1^\ast$ where
\begin{subequations}
\begin{align}
 \mathcal{C}_1 &= \frac{1}{\sqrt{N}} \int dxdy\ v_N(x-y)\bar\phi(y) a_x(\bar u)a_x(\bar c) a_y(\bar c) \label{T1}\\
      &\quad + \frac{1}{\sqrt{N}} \int dxdy\ v_N(x-y)\bar\phi(y) a^\ast_x(c)a_x(\bar c) a_y(\bar c) \label{T3}\\
      &\quad +\frac{1}{\sqrt{N}} \int dxdy\ v_N(x-y)\bar\phi(y) a^\ast_x(u)a^\ast_x(c) a_y(\bar c) \label{T4}\\
 &\quad + \frac{1}{\sqrt{N}} \int dxdy\ v_N(x-y)\bar\phi(y) a^\ast_x(c)a^\ast_y(u) a_x(\bar c)\\
&\quad + \frac{1}{\sqrt{N}} \int dxdy\ v_N(x-y)\bar\phi(y) a^\ast_x(u) a^\ast_y(u)a^\ast_x(c)\\
 &\quad + \frac{1}{\sqrt{N}} \int dxdy\ v_N(x-y)\bar\phi(y)a^\ast_x(u) a^\ast_y(u) a_x(\bar u)\\ 
  &\quad + \frac{1}{\sqrt{N}} \int dxdy\ v_N(x-y)\bar\phi(y) a^\ast_x(u)a_x(\bar u) a_y(\bar c)\\ 
   &\quad + \frac{1}{\sqrt{N}} \int dxdy\ v_N(x-y)\bar\phi(y) a^\ast_y(u)a_x(\bar u) a_x(\bar c) \label{Tlast},
\end{align}
\end{subequations}
and $\mathcal{C}_1^\ast$ denotes the adjoint operator of $\mathcal{C}_1$.  
Similarly, the quartic terms \eqref{quartic-terms} contains $2^4+1$ normal ordered terms which we write 
$\eqref{normal-4th} =\mathcal{D}= \mathcal{D}_1+\mathcal{D}_1^\ast + \mathcal{D}_2$ where  
\begin{subequations}
\begin{align}
 \mathcal{D}_1 =&\  \frac{1}{2N}\int dxdy\  v_N(x-y)a^\ast_x(c)a^\ast_y(c)a^\ast_x(u)a^\ast_y(u) \label{D1}\\
 & + \frac{1}{2N}\int dxdy\ v_N(x-y) a^\ast_x(c)a^\ast_y(c)a^\ast_x(u)a_y(\bar c)\label{D2}\\
 &+ \frac{1}{2N}\int dxdy\ v_N(x-y) a^\ast_x(c)a^\ast_y(c)a^\ast_y(u)a_x(\bar c)\label{D3}\\
 &+ \frac{1}{2N}\int dxdy\ v_N(x-y) a^\ast_x(c)a^\ast_x(u)a^\ast_y(u)a_y(\bar u) \label{D4}\\
 &+ \frac{1}{2N}\int dxdy\ v_N(x-y)a^\ast_x(u)a^\ast_y(c)a^\ast_y(u)a_x(\bar u) \label{D5}\\
 &+ \frac{1}{2N}\int dxdy\ v_N(x-y) a^\ast_x(c)a^\ast_x(u)a_y(\bar c)a_y(\bar u) \label{D6}\\ 
  &+ \frac{1}{2N}\int dxdy\ v_N(x-y) a^\ast_x(u)a^\ast_y(c)a_y(\bar c)a_x(\bar u)\label{D7}
 \end{align}
and
\begin{align}
\mathcal{D}_2 =& \frac{1}{2N}\int dxdy\ v_N(x-y) a^\ast_x(u)a^\ast_y(u)a_x(\bar u)a_y(\bar u)\label{D8}\\
&+ \frac{1}{2N}\int dxdy\ v_N(x-y)a^\ast_x(c)a^\ast_y(c)a_x(\bar c)a_y(\bar c)\label{D9}\\ 
&-\frac{1}{2N}\int dxdy\ v_N(x-y)a_x^\ast a_y^\ast a_x a_y \label{D10}.
\end{align}
\end{subequations}

\subsection{Proof of Theorem \ref{main-theorem-2}}Let us first study the solution to 
\begin{align}\label{inhomo}
\vect{S}_D u= f, \ \ \ u(0, \cdot) = 0,
\end{align}
where $u, f$ are Fock vectors. Since $\vect{S}_D$ is a diagonal operator then it follows that \eqref{inhomo} reduces to an inhomogeneous Schr\"odinger equation with potential on each sector. In order to study \eqref{inhomo}, we need to introduce Strichartz and dual Strichartz norm for Fock vectors. 

Let $T>0$ and define the Strichartz norm on the time interval $[0, T]$ by
\begin{align*}
\Lp{u}{S}  =&\ \max\Big\{\Lp{u}{L^\infty(dt)L^2(dx_1 \cdots dx_n)}, \Lp{u}{L^2(dt)L^6(d(x_1-x_2))L^2(d(x_1+x_2) \cdots dx_n)},\\
&\ \Lp{u}{L^2(dt)L^3(d(x_1-x_2)d(x_1+x_2))L^2(dx_3 \cdots dx_n)},  \text{and all other permutations}\Big\}
\end{align*}
and the dual Strichartz norm
\begin{align*}
\Lp{u}{S'}  =&\ \min\Big\{\Lp{u}{L^1(dt)L^2(dx_1 \cdots dx_n)}, \Lp{u}{L^2(dt)L^\frac{6}{5}(d(x_1-x_2))L^2(d(x_1+x_2) \cdots dx_n)},\\
&\ \Lp{u}{L^2(dt)L^\frac{3}{2}(d(x_1-x_2)d(x_1+x_2))L^2(dx_3 \cdots dx_n)}, \text{and all other permutations}\Big\}.
\end{align*}
Also, let $X \in \mathcal{F}$ be a Fock vector such that all but finitely many of the components are zeros, say $X_0, X_1, \ldots, X_k$ are nonzero. Then we define
the Strichartz norm on $\mathcal{F}$ by
\begin{align*}
\Lp{X}{S} = \max\left\{|X_0|, \Lp{X_1}{S}, \ldots, \Lp{X_k}{S} \right\},
\end{align*}
and similarly for the dual Strichartz norm
\begin{align*}
\Lp{X}{S'} = \max\left\{|X_0|, \Lp{X_1}{S'}, \ldots, \Lp{X_k}{S'} \right\}.
\end{align*}

By standard arguments, we have the following lemma.
\begin{lemma}\label{Fock-Strichartz}
Let $f$ be a Fock vector with zero entries past the $k$th sector. Assume $u$ is a solution to 
\begin{align}
\vect{S}_D u = f, \ \ u(0, \cdot) = 0
\end{align}
then we have the estimate
\begin{align}
\Lp{u}{S} \lesssim \Lp{f}{S'}.
\end{align}
In particular, it follows that $\sup_{ t \in [0, T]} \Lp{u}{\mathcal{F}} \lesssim \Lp{u}{S}$.
\end{lemma}

\begin{proof}[Sketch of Proof]
Note for $0<\beta<1$, we see that
\begin{align*}
\frac{1}{N}\Lp{v_N(x_1-x_2) u}{S'} \lesssim&\ \frac{1}{N}\Lp{v_N(x_1-x_2) u}{L^2(dt)L^{\frac{6}{5}}(d(x_1-x_2))L^2(d(x_1+x_2) \cdots dx_n)}\\
\lesssim&\ \frac{1}{N}\Lp{v_N}{L^\frac{3}{2}}\Lp{u}{S} \lesssim N^{\beta-1}\Lp{u}{S}.
\end{align*}
Hence the potential can be treated as a perturbation of the free Schr\"odinger case. 
\end{proof}

The following is the main proposition of this section.
\begin{prop}\label{iteration}
Let $0<\beta<1$ and let $E(t)$ be the solution to \eqref{inhomo}. Assume $(\Lambda(t), \Gamma(t), \phi(t))$ solves the
time-dependent HFB system. Then there exists $\kappa=\kappa(\beta, \varepsilon)$ such that 
\begin{align}
\Lp{E(t)}{\mathcal{F}} \lesssim N^{\frac{\beta-1}{2}}\exp\left(\kappa T\right)
\end{align}
for all $t \in [0, T]$ when $N$ is sufficiently large. 
\end{prop}

\begin{remark}\label{main-remark}
The author believes it is more beneficial to first consider the heuristics behind the main argument of Proposition \ref{iteration}
before proceeding with the proof. 
The main idea behind the proof of the proposition is the fact that $\mathcal{P}$ can be split into a sum $\mathcal{P}_1+ \mathcal{P}_2$ where $\mathcal{P}_2$ 
is a bounded operator on Fock space (for the first $k$ sector) and $\mathcal{P}_1$ satisfies 
$\Lp{\mathcal{P}_1 E}{\mathcal{F}}\lesssim N\exp(\kappa T)\Lp{E}{\mathcal{F}}$, which is the main obstacle we need to overcome, but, fortunately, we also have
$\Lp{\mathcal{P}_1 E}{S'}\lesssim N^{\frac{\beta-1}{2}}\exp(\kappa T)\Lp{E}{S}$. 

However, since $\vect{S}_F$ is not a diagonal operator, it's not immediately obvious 
how one would solve \eqref{inhomo} and estimate the solution. Nevertheless, we begin by considering
\begin{align}\label{diag-inhomo}
\vect{S}_D E_1 = \widetilde X, \ \ E_1(0, \cdot) = 0,
\end{align}
which can be readily solve using Lemma \ref{Fock-Strichartz} and propositions in the following section. Also, note we have the estimate 
$\Lp{E_1}{S}\lesssim \llp{\widetilde X}_{S'}\lesssim N^{\frac{\beta-1}{2}}\exp(\kappa T)$ since $\frac{1}{\sqrt{N}}\Lp{v_N}{\frac{6}{5}} \lesssim N^{\frac{\beta-1}{2}}$.
 With $E_1$, we can construct
\begin{align*}
E_\infty =&\ E_1+\vect{S}_D^{-1}\mathcal{P}_1E_1+(\vect{S}_D^{-1}\mathcal{P}_1)^2E_1 + \ldots +(\vect{S}_D^{-1}\mathcal{P}_1)^n E_1+\ldots\\
 =&\ (\vect{S}_D-\mathcal{P}_1)^{-1}\widetilde X
\end{align*}
which solves
\begin{align}
(\vect{S}_D-\mathcal{P}_1) E_\infty = \widetilde X, \ \ E_\infty(0, \cdot) = 0.
\end{align}
Moreover, we see that
\begin{align*}
\Lp{E_\infty}{\mathcal{F}} \lesssim&\ \Lp{E_1}{\mathcal{F}}+ \Lp{\vect{S}_D^{-1}\mathcal{P}_1E_1}{\mathcal{F}}+\ldots + \Lp{(\vect{S}_D^{-1}\mathcal{P}_1)^nE_1}{\mathcal{F}}+\ldots\\
\lesssim&\ N^{\frac{\beta-1}{2}}\exp(\kappa T)+C(N^{\frac{\beta-1}{2}}\exp(\kappa T))^2+\ldots \\
\lesssim&\ N^{\frac{\beta-1}{2}}\exp(\kappa T)
\end{align*}
for some $C>0$ when $N$ is sufficiently large. Finally, suppose $E(t)$ is a solution to \eqref{inhomo2} then we see that $E-E_\infty$ satisfies 
\begin{align}
(\vect{S}_D-\mathcal{P})(E-E_\infty) = \mathcal{P}_2 E_\infty.
\end{align}
By energy estimates, we see that
\begin{align*}
\Lp{E(t)-E_\infty(t)}{\mathcal{F}} \lesssim&\ \int^t_0 ds\ \Lp{\mathcal{P}_2E_\infty(s)}{\mathcal{F}}\\\
\lesssim&\ \int^t_0 ds\ \llp{E_\infty(s)}_\mathcal{F} \lesssim N^{\frac{\beta-1}{2}}\exp(\alpha T)
\end{align*}
if $\llp{\mathcal{P}_2E_\infty(t)}_\mathcal{F}\lesssim \llp{E_\infty(t)}_\mathcal{F}$.
Then the result follows from the triangle inequality. 
 Thus, to establish Proposition \ref{iteration}, it suffices to prove $\Lp{\mathcal{P}_2E_\infty(t)}{\mathcal{F}}\lesssim \Lp{E_\infty(t)}{\mathcal{F}}$. 
 Unfortunately, the claim is false as stated since we can only show that $\mathcal{P}_2$ is a bounded operator on the first $k$ sectors of $\mathcal{F}$ not the
 the entire $\mathcal{F}$. 
 
 To fix this problem, the proof will only consider the  $M$-truncation (iteration) of the series $E_\infty$, i.e. we consider
\begin{align}
E_M:= E_1+\vect{S}_D^{-1}\mathcal{P}_1E_1+(\vect{S}_D^{-1}\mathcal{P}_1)^2E_1 + \ldots +(\vect{S}_D^{-1}\mathcal{P}_1)^{M-1} E_1,
\end{align}
where $M$ depends on $\beta$. 
\end{remark}

\begin{proof}[Proof of Proposition \ref{iteration}]
Let $0<\beta<1$. Choose $\alpha$ so that Theorem 3.3 in \cite{GM4} holds and $M\ge \lceil\frac{2}{1-\beta}\rceil+1$.

Following Remark \ref{main-remark}, we let $E_1$ be the solution to (\ref{diag-inhomo}) then consider
\begin{align*}
E_M= E_1+\vect{S}_D^{-1}\mathcal{P}_1E_1+(\vect{S}_D^{-1}\mathcal{P}_1)^2E_1 + \ldots +(\vect{S}_D^{-1}\mathcal{P}_1)^{M-1} E_1
\end{align*}
which have at most $k=5+4\times M$ nonzero sectors since $\mathcal{P}$ is a fourth-order operator in $a^\ast, a$. Next, observe $E_M$ satisfies
\begin{align}
\vect{S}_D(I-\vect{S}_D^{-1}\mathcal{P}_1)E_M = \widetilde X -\mathcal{P}_1(\vect{S}_D^{-1}\mathcal{P}_1)^{M-1}E_1
\end{align}
then, by Strichartz estimates, we have that
\begin{equation}\label{EM}
\begin{aligned}
\Lp{(I-\vect{S}_D^{-1}\mathcal{P}_1)E_M(t)}{\mathcal{F}} \lesssim&\ \llp{\widetilde X}_{S'}+\Lp{\mathcal{P}_1(\vect{S}_D^{-1}\mathcal{P}_1)^{M-1}E_1}{S'}\\
  \lesssim&\ N^{\frac{\beta-1}{2}}\exp(\kappa T)
\end{aligned}
\end{equation}
when $N$ is sufficiently large. 
Using reverse triangle inequality on the LHS of \eqref{EM}, it follows
\begin{align}\label{EM-ineq}
\Lp{E_M(t)}{\mathcal{F}} \lesssim&\ N^{\frac{\beta-1}{2}}\exp(\kappa T).
\end{align}
Moreover, let $E(t)$ be a solution to \eqref{inhomo2} then it follows $E-E_M$ satisfies the equation
\begin{align}
\left(\vect{S}_D-\mathcal{P}\right)(E-E_M) = \mathcal{P}_2 E_{M-1}+\mathcal{P}(\vect{S}_D^{-1}\mathcal{P}_1)^{M-1}E_1.
\end{align}
Using energy estimate yields 
\begin{align*}
\Lp{E(t)-E_M(t)}{\mathcal{F}} 
\lesssim&\ \int^t_0ds\ \{\Lp{\mathcal{P}_2E_M(s)}{\mathcal{F}}+\Lp{\mathcal{P}(\vect{S}_D^{-1}\mathcal{P}_1)^{M-1}E_1(s)}{\mathcal{F}}\}\\
\lesssim&\ N^{\frac{\beta-1}{2}}\exp(\kappa T)+N^{1+M\frac{\beta-1}{2}}\exp\left(\kappa'T\right).
\end{align*}
Again, by the reverse triangle inequality, we arrive at the desired conclusion. In particular, we have proven Theorem \ref{main-theorem-2}. 
\end{proof}

\section{Dispersive Estimates for $\mathcal{P}$}
In this section we obtain estimates for the Fock space operator $\mathcal{P}$ which was used in the previous section. We start by splitting
$\mathcal{P}$ into $\mathcal{P}_1+\mathcal{P}_2$ and show that $\mathcal{P}_2$ has uniform in $N$ bounded operator norm 
(but grows as a function of $T$) whereas $\mathcal{P}_1$ can be controlled uniformly in $N$ by Strichartz estimates on Fock space 
but has operator norm that grows in $N$. 
More precisely, we let $\mathcal{P}_1 = \mathcal{C}+\mathcal{D}$ and $\mathcal{P}_2=\mathcal{Q}$.  

Let us consider estimates for $\mathcal{P}_1$. The proof is similar to the prove given in \cite{GM3}. 

\begin{prop}
 Fix $k \in \nn$. If $X$ is a Fock space vector that has nonzero entries only in the first $k$ sectors, then we have the estimates
 \begin{subequations}
 \begin{align}
  \llp{\mathcal{C}X}_{S'} \lesssim_k&\ N^{\frac{\beta-1}{2}}\exp\left(\kappa T\right)\llp{X}_S \label{Strichartz-cubic}\\
   \llp{\mathcal{C}X}_{\mathcal{F}} \lesssim_k&\ N\exp\left(\kappa T\right)\llp{X}_\mathcal{F} \label{f-f-estimate}.
 \end{align}
 \end{subequations}
\end{prop}

\begin{proof}
 Write $\ch(k) = \delta + p$. Then letting \eqref{T1} acts on the Fock vector 
 $X = (0, \ldots, 0, F(x_1, \ldots, x_{n+3}), 0, 0, \ldots)$ gives us the vector with
 \begin{subequations}
 \begin{align}
  (\eqref{T1}X)_n\sim &\int dx dydz\ \frac{v_N(x-y)}{\sqrt{N}}\bar\phi(y)\overline{u}(x, z)F(x, y, z, x_1, \ldots, x_n) \label{term1} \\
  & +\int dxdydz dx'\  \frac{v_N(x-y)}{\sqrt{N}}\bar\phi(y)\overline{u}(x, z)\bar p(x, x')F(y, z, x', \ldots)\label{term2}\\
  & +\int dx dydz dx'dy'\  \frac{v_N(x-y)}{\sqrt{N}}\bar\phi(y)\overline{u}(x, z)\bar p(x, x')
  \bar p(y, y')F(x', y', z, \ldots)\label{term3}\\
  & + \text{similar terms} \nonumber
 \end{align}
 \end{subequations}
 on the $n$th sector and zero elsewhere.

 Let us focus on establishing \eqref{Strichartz-cubic} for the first term. Since $x_1, \ldots, x_n$
 play only passive roles in the estimate, let us temporarily suppress them in the notation. We begin by writing
 $F(x, y, z) = G(x-y, x+y, z)$. Then, by H\"older's inequality and Sobolev embedding, we have the estimate
 \begin{align*}
|\eqref{term1}| \leq&\ \int dzd(x-y)d(x+y)\ \frac{v_N(x-y)}{\sqrt{N}}|\bar\phi(y) \bar u(x, z)G(x-y, x+y, z)| \\
\lesssim&\ \int d(x-y)\ \frac{v_N(x-y)}{\sqrt{N}} \llp{\phi}_{L^6(dx')} \llp{u}_{L^3(dx')L^2(dz)}\llp{G(x-y, \cdot, \cdot)}_{L^2(d(x+y)dz)}\\
\lesssim&\ \frac{1}{\sqrt{N}}\llp{v_N}_{L^{\frac{6}{5}}}\llp{|\grad|^{\frac{1}{2}}\phi}_{L^3(dx)} 
\llp{u}_{L^3(dx)L^2(dy)}\llp{F}_{L^6(d(x-y))L^2(d(x+y)dz)}.
 \end{align*}
Hence, by Corollary \ref{sh-strichartz-corollary}, we have the estimate
\begin{align*}
 &\llp{\eqref{term1}}_{L^1([0, T])L^2(dx_1\cdots dx_n)}\\
&\lesssim\ \frac{1}{\sqrt{N}}\llp{v_N}_{L^\frac{6}{5}}\llp{\grad^{\frac{1}{2}}\phi}_{L^4([0, T])L^3(dx)}
\llp{u}_{L^4([0, T])L^3(dx)L^2(dy)}\llp{X}_{S}\\
&\lesssim\ N^{\frac{\beta-1}{2}}\exp(\kappa T)\llp{X}_{S}.
 \end{align*}
For \eqref{term2}, we see that
\begin{align*}
|\eqref{term2}|&\leq \int dxdydz dx'\  \frac{v_N(x-y)}{\sqrt{N}}|\bar\phi(y)\overline{u}(x, z)\bar p(x, x')F(x', y, z)|\\
&\lesssim\ \int d(x-y)d(x+y)\ \frac{v_N(x-y)}{\sqrt{N}}|\phi(y)|\\
&\quad\times\Lp{u(x, \cdot)p(x, \cdot)}{L^2(dzdx')}\llp{F(y, \cdot, \cdot)}_{L^2(dzdx')}\\
&\lesssim\ N^{-\frac{1}{2}} \llp{\phi}_{L^2(dx)}\Lp{u}{L^\infty(dx)L^2(dy)}\Lp{p}{L^\infty(dy)L^2(dz)}\llp{F}_{L^2(dxdydz)}
\end{align*}
which then, by Corollary \ref{sh-corollary}, it follows
\begin{align*}
 & \llp{\eqref{term2}}_{L^1([0, T])L^2(dx_1\cdots dx_n)}\\
 &\lesssim N^{-\frac{1}{2}}\llp{|\grad|^{\frac{1}{2}+\varepsilon}\phi}_{L^\infty(dt)L^2(dx)}
 \Lp{u}{L^2([0, T])L^6(dx)L^2(dy)}\Lp{p}{L^2([0, T])L^6(dx)L^2(dy)}\llp{X}_{S}\\ 
&\lesssim N^{-\frac{1}{2}}\exp\left(\kappa T\right)\llp{X}_{S}.
 \end{align*}
 Lastly, we see that
 \begin{align*}
  |\eqref{term3}| 
\lesssim&\ N^{-\frac{1}{2}} \llp{\phi}_{L^\infty(dx)}\llp{u}_{L^\infty(dx)L^2(dy)} \llp{p}_{L^\infty(dx)L^2(dy)}^2 \llp{F}_{L^2(dxdydz)}
\end{align*}
which again means
\begin{align*}
 &\llp{\eqref{term3}}_{L^1([0, T])L^2(dx_1\cdots dx_n)}\lesssim N^{-\frac{1}{2}}\exp\left(\kappa T\right)\llp{X}_{S}.
\end{align*}
Thus, in general, the $p$ term in $\ch(k)$ are lower order in $N$ but they contribute to the growth in time in our estimates. 

For the Fock space to Fock space estimate\eqref{f-f-estimate} of \eqref{T1}, let us first estimate \eqref{term1}. By Cauchy-Schwarz inequality, it
suffices to estimate
\begin{align*}
 &\frac{1}{\sqrt{N}}\llp{v_N(x-y)\bar\phi(y)\bar u(x, z)}_{L^2(dxdydz)}\\
 &\lesssim \frac{1}{\sqrt{N}}\llp{v_N}_{L^2}\llp{\phi}_{L^\infty(dx)}\llp{u}_{L^2(dxdy)} \lesssim N\exp(\kappa T)
\end{align*}
which follows from Theorem \ref{sh2-thm}. The other two terms are handled in the same way with the only difference being 
the additional multiplication by $\llp{p}_{L^2(dxdy)}$.

The estimates for the adjoint of \eqref{T1}, contained in $\mathcal{C}_1^\ast$, is obtained by the dual estimate of \eqref{T1}. 

Next, let \eqref{T3} acts on $X=(0, \ldots, 0, F(x_1, \ldots, x_{n+1}), 0, \ldots)$ yields
\begin{subequations}
\begin{align}
&\int dy\ \frac{v_N(x_1-y)}{\sqrt{N}}\bar\phi(y)F( y, x_1, \ldots, x_n)\label{term4}\\
&+ \text{similar or lower order terms} \label{LOT2}
\end{align}
\end{subequations}
on the $n$ sector and zeroes elsewhere. Again, since $x_2,\ldots, x_n$ are passive variable, let us write $F(y, x_1) 
= G(x_1-y, x_1+y)$. Then it follows
\begin{align*}
\llp{\eqref{term4}}_{L^2(dx_1)}&\lesssim \int dy\ \frac{v_N(y)}{\sqrt{N}}\llp{\bar\phi(x_1-y)G(y, 2x_1-y)}_{L^2(dx_1)}\\
&\lesssim \int dy\ \frac{v_N(y)}{\sqrt{N}} \llp{\phi}_{L^\infty(dx)}
\llp{G(y, \cdot)}_{L^2(d x)}\\
&\lesssim N^{\frac{\beta-1}{2}}\llp{\phi}_{L^\infty(dx)}\llp{F}_{L^6(d(x-y))L^2(d(x+y))}
 \end{align*}
which means
\begin{align*}
 &\llp{\eqref{term4}}_{\mathcal{S}'} \lesssim \llp{\eqref{term4}}_{L^1(dt)L^2(dx_1\cdots dx_n)}\\
&\lesssim\ N^{\frac{\beta-1}{2}}\llp{\grad^{\frac{1}{2}+\varepsilon}\phi}_{L^2(dt)L^6(dx)}\llp{F}_{L^2(dt)L^6(d(x_1-y))L^2(d(x_1+y) dx_2\cdots dx_n)}\\
 &\lesssim\ N^{\frac{\beta-1}{2}} T \llp{X}_{S}.
 \end{align*}
The estimate for \eqref{LOT2} is obtained in a similar manner as above and the Fock space to Fock space estimate is clear. Moreover, the adjoint of \eqref{T3} is again obtained from
the dual estimate. 

The estimates for \eqref{T4}-\eqref{Tlast} are obtain in a similar manner. 
\end{proof}

Next, we estimate the quartic terms. 
\begin{prop}
 Fix $k \in \nn$ and let $0<\beta<1$. If $X$ is a Fock space vector that has nonzero entries only in the first $k$ sectors, then we have the estimates
 \begin{subequations}
 \begin{align}
  \llp{\mathcal{D}X}_{S'} \lesssim_k&\ N^{\beta-1}\exp\left(\kappa T\right)\llp{X}_S \label{Strichartz-quartic}\\
   \llp{\mathcal{D}X}_{\mathcal{F}} \lesssim_k&\ \sqrt{N}\exp\left(\kappa T\right)\llp{X}_\mathcal{F} \label{f-f-estimate2}.
 \end{align}
 \end{subequations}
\end{prop}

\begin{proof}
 Letting \eqref{D1} act on $F_{n-4}(x_1, \ldots, x_{n-4})$ yields
 \begin{subequations}
 \begin{align}
  \eqref{D1}F_{n-4}\sim &\frac{1}{2N}v_N(x_1-x_2)u(x_1, x_3)u(x_2, x_4)F_{n-5}(x_5, \ldots, x_n) \label{qterm1}\\
   &\frac{1}{2N}\int dy\ u(x_1, x_3)v_N(x_1-y)p(y, x_2)u(y, x_4)F_{n-5}(x_5, \ldots, x_n) \label{qterm2}\\
  &\frac{1}{2N}\int dxdy\ u(x, x_3)p(x, x_1)v_N(x-y)p(y, x_2)u(y, x_4)F_{n-5}(x_5, \ldots, x_n) \label{qterm3}\\
 &+\text{similar or lower order terms}. \label{LOT3}
 \end{align}
 \end{subequations}
Since we have the estimate
\begin{align*}
&\frac{1}{2N} \llp{v_N(x_1-x_2)u(x_1, x_3)u(x_2, x_4)}_{L^{\frac{3}{2}}(d(x_1-x_2)d(x_1+x_2))L^2(dx_3dx_4)} \\
&\leq \frac{1}{2N}\llp{v_N}_{L^{\frac{3}{2}}}\llp{u}_{L^3(dx)L^2(dy)}^2
\end{align*}
then it follows
\begin{align*}
 \llp{\eqref{qterm1}}_{S'} \lesssim N^{\beta-1}\llp{u}_{L^4([0, T])L^3(dx)L^2(dy)}^2\llp{X}_{S} 
 \lesssim N^{\beta-1}\exp(\kappa T)\llp{X}_{S}.
\end{align*}
To estimate \eqref{qterm2}, observe we have the estimate
\begin{align*}
&\frac{1}{2N} \llp{\int dy\ v_N(x_1-y)u(x_1, x_3)p(y, x_2)u(y, x_4)}_{L^{2}(dx_1 dx_2dx_3dx_4)}\\
&\lesssim \frac{1}{2N} \llp{v_N}_{L^1}\llp{u}_{L^2(dxdy)}\llp{u}_{L^\infty(dx)L^2(dy)}\llp{p}_{L^\infty(dx)L^2(dy)}
\end{align*}
which means
\begin{align*}
  &\llp{\eqref{qterm2}}_{L^1(dt)L^2(dx_1\cdots dx_n)}\lesssim N^{-1}\exp(\kappa T)\llp{X}_S.
\end{align*}
Similar, we see that
\begin{align*}
 &\llp{\eqref{qterm3}}_{L^1(dt)L^2(dx_1\cdots dx_n)} \\
 &\lesssim N^{-1}T\llp{u}_{L^\infty([0, T])L^2(dx)L^2(dy)}^2
 \llp{p}_{L^\infty([0, T])L^\infty(dx)L^2(dy)}^2\llp{X}_S\\
 &\lesssim N^{-1}\exp(\kappa T)\llp{X}_S.
\end{align*}
To obtain the Fock space to Fock space estimate, we see that
\begin{align*}
 &\frac{1}{2N}\llp{v_N(x_1-x_2)u(x_1, x_3)u(x_2, x_4)}_{L^2(dx_1dx_2dx_3dx_4)}\\
 &\lesssim  N^{-1}\llp{v_N}_{L^2}\llp{u}_{L^\infty(dx)L^2(dy)}\llp{u}_{L^2(dxdy)} 
\end{align*}
then it follows
\begin{align*}
 \llp{\eqref{qterm1}}_{L^2(dx_1\cdots dx_n)}\lesssim N^\frac{1}{2}\exp(\kappa T)\llp{X}_\mathcal{F}.
\end{align*}
The Fock space to Fock space estimate for \eqref{qterm2} and \eqref{qterm3} follow a similar calculation. 

Next, let us consider the action of \eqref{D2} and \eqref{D3} on $F_{n-2}(x_1, \ldots, x_{n-2})$. Observe we have
\begin{subequations}
\begin{align}
 \eqref{D2}F_{n-2} \sim&\ \frac{1}{2N}v_N(x_1-x_2)u(x_1, x_3)F_{n-2}(x_2, x_4, \ldots, x_n)\label{qterm6}\\
    &+\text{similar or lower order terms} \label{LOT4}
\end{align}
\end{subequations}
It is similar for \eqref{D3}. Since $x_5, \ldots, x_n$ are passive variables, let us suppress the notation. Then we see that
\begin{align*}
 &\frac{1}{2N} \llp{v_N(x_1-x_2)u(x_1, x_3)F(x_2, x_4)}_{L^{\frac{3}{2}}(d(x_1-x_2)d(x_1+x_2))L^2(dx_3dx_4)}\\
 &\lesssim \frac{1}{2N} \llp{v_N}_{L^\frac{3}{2}}\llp{u}_{L^6(dx)L^2(dy)}\llp{F}_{L^2(dxdy)}
\end{align*}
which leads to the desired estimate. The Fock space to Fock space estimate is similar to the above estimate for \eqref{qterm1}. 

The estimates for \eqref{D4}, \eqref{D5}, and \eqref{D8} follow from the same arguments as \eqref{qterm2} and \eqref{qterm3}.

To estimate \eqref{D6}, observe we have
\begin{subequations}
\begin{align}
 \eqref{D6}F_n \sim& \frac{1}{2N} \int dydz\ v_N(x_1-y)u(x_1, x_2)\bar u(y, z)F_n(y, z, x_3,\ldots, x_n)\label{qterm7}\\
 &+\text{similar or lower order terms} \label{LOT5}. 
\end{align}
\end{subequations}
Then it follows
\begin{align*}
&\llp{\eqref{qterm7}}_{L^2(dx_1dx_2)}\\
 &\lesssim \frac{1}{2N}\int d\eta\ v_N(\eta)\llp{ \int dz\ u(x_1, x_2)\bar u(x_1-\eta, z)F_n(x_1-\eta, z)}_{L^2(dx_1dx_2)}\\
 &\lesssim N^{-1}\llp{u}_{L^\infty(dx)L^2(dy)}^2\llp{F}_{L^2(dx_1dx_2)}
\end{align*}
which yields the desired estimate. Likewise, we can estimate \eqref{D7}. 

Lastly, when we remove \eqref{D10} from \eqref{D9}, the remainder parts can be handle in the same way as in the cases of \eqref{D2} and \eqref{D3}. 
\end{proof}

Let us now consider the estimates for the quadratic terms. 
\begin{prop}
 Fix $k \in \nn$. Then $\mathcal{Q}$, defined by \eqref{quadratic-terms}, is bounded uniformly in $N$ from the first $k$ sectors of Fock space
 to Fock space. More precisely, let $X$ be a Fock vector that has nonzero entries only in the first $k$ sectors, then  we have the 
 following uniform in $N$ estimate
 \begin{align}
  \llp{\mathcal{Q}X}_\mathcal{F} \lesssim_{\varepsilon, k} \exp(\kappa T) \llp{X}_\mathcal{F}
 \end{align}
for some $\kappa>0$.
\end{prop}

\begin{proof}
 Letting \eqref{quadratic-part1} acts on the Fock vector 
 $X = (0, \ldots, 0, F_n(x_1, \ldots, x_n), 0, \ldots 0)$, where $n\leq k$, yields
 \begin{align*}
  (\eqref{quadratic-part1}X)_n =&\ \sum^n_{j=1}(v_N\ast\diag\Gamma)(t, x_j)F_n(x_1, \ldots , x_n)\\
  &\ \sum_{j=1}^n\int dy\ (v_N\Gamma)(t, x_j, y)F_n(y, x_1, \ldots,\hat x_j, \ldots, x_n)
 \end{align*}
and zero elsewhere. It suffices to consider the first summand of the two summations. Also, since $x_3,\ldots, x_n$ are passive variables in the first summand, we will suppress them without further comment. 

Observe, by H\"older, Sobolev, and Proposition \ref{4.9}, we have the estimate
\begin{align*}
 &\llp{(v_N\ast\diag\Gamma)(t, x_1)F(x_1, x_2)}_{L^2(dx_1 dx_2)} \\
 &\lesssim\ \vect{\dot N}_T(\grad_{x+y}^2\Gamma)\llp{F}_{L^2(dx_1 dx_2)}\lesssim \exp\left(\kappa T\right)\llp{F}_{L^2(dx_1 dx_2)}
\end{align*}
and likewise we have
\begin{align*}
 &\llp{\int dy\ v_N(x_1-y)\Gamma(x_1, y)F(y, x_2)}_{L^2(dx_1 dx_2)}\lesssim \exp\left(\kappa T\right)\llp{F}_{L^2(dx_1 dx_2)}.
\end{align*}

Next, letting \eqref{quadratic-part2} acts on $X$ yields
\begin{align*}
(\eqref{quadratic-part2}X)_n = \int dy\ w(x_1, y)F(y, x_2, \ldots, x_n)+\text{similar terms}
\end{align*}
and zero elsewhere. Then, by Cauchy-Schwarz inequality, we see that
\begin{align*}
 \llp{\int dy\ w(x_1, y)F(y, x_2)}_{L^2(dx_1 dx_2)}\lesssim \llp{w}_{L^2(dxdy)}\llp{F}_{L^2(dx_1 dx_2)}.
\end{align*}

To estimate $\llp{w}_{L^2(dxdy)}$ uniformly in $N$, we divide the estimate into two parts. For the first part, we consider
\begin{subequations}
\begin{align}\label{w-part1}
\llp{\sh(k)\circ\overline{(v_N\Lambda)}\circ \overline{\ch(k)}^{-1}}_{L^2(dxdy)}
\end{align}
which can be estimated as follows
\begin{align*}
\eqref{w-part1}\lesssim \vect{N}_T(\grad_{x+y}^2\Lambda)\llp{\sh(k)}_{L^2(dxdy)}\lesssim \exp(\kappa T).
\end{align*}
The estimate for the second part 
\begin{align}
 \llp{ \vect{\tilde W}(\overline{\ch(k)})\circ \overline{\ch(k)}^{-1}}_{L^2(dxdy)}
\end{align}
\end{subequations}
follows from Lemma \ref{w(ch)-est}. 
\end{proof}

\newcommand{\etalchar}[1]{$^{#1}$}
\providecommand{\bysame}{\leavevmode\hbox to3em{\hrulefill}\thinspace}
\providecommand{\MR}{\relax\ifhmode\unskip\space\fi MR }
% \MRhref is called by the amsart/book/proc definition of \MR.
\providecommand{\MRhref}[2]{%
  \href{http://www.ams.org/mathscinet-getitem?mr=#1}{#2}
}
\providecommand{\href}[2]{#2}

%\bibliographystyle{amsalpha}
%\bibliography{project2bib}

\begin{thebibliography}{CGMZ20}

\bibitem[BBCFS16]{BBCFS}
V.~Bach, S.~Breteaux, T.~Chen, J.~Fr\"ohlich, and I.~M. Sigal, \emph{The
  time-dependent hartree-fock-bogoliubov equations for bosons}, arXiv preprint
  arXiv:1602.05171 (2016), 1--36.

\bibitem[BCS17]{BCS}
C.~Boccato, S.~Cenatiempo, and B.~Schlein, \emph{Quantum many-body fluctuations
  around nonlinear {S}chr{\"o}dinger dynamics}, Annales Henri Poincar{\'e}
  \textbf{18} (2017), no.~1, 113--191.

\bibitem[BdOS15]{BDS}
N.~Benedikter, G.~de~Oliveira, and B.~Schlein, \emph{Quantitative derivation of
  the {G}ross-{P}itaevskii equation}, Communications on Pure and Applied
  Mathematics \textbf{68} (2015), no.~8, 1399--1482.

\bibitem[Ber12]{berazin}
F.~A. Berazin, \emph{The method of second quantization}, Pure and Applied
  Physics, vol.~24, Academic Press, 2012.

\bibitem[CGMZ20]{CGMZ}
J.~Chong, M.~Grillakis, M.~Machedon, and Z.~Zhao, \emph{Global estimates for
  the hartree-fock-bogoliubov equations}, arXiv preprint, arXiv: 2008.01753
  (2020), 1--47.

\bibitem[Che12]{XChen}
X.~Chen, \emph{Second order corrections to mean field evolution for weakly
  interacting bosons in the case of three-body interactions}, Archive for
  Rational Mechanics and Analysis \textbf{203} (2012), no.~2, 455--497.

\bibitem[Cho20]{Chong}
J.~Chong, \emph{Dynamics of large boson systems with attractive interaction and
  a derivation of the cubic focusing {NLS} in $\rr^3$}, arXiv preprint
  arXiv:1608.01615 (2020), 1--52.

\bibitem[CLS11]{CLS}
L.~Chen, J.O. Lee, and B.~Schlein, \emph{Rate of convergence towards hartree
  dynamics}, Journal of Statistical Physics \textbf{144} (2011), no.~4,
  872--903.

\bibitem[DG13]{DezGer}
J.~Derezi{\'n}ski and C.~G{\'e}rard, \emph{Mathematics of quantization and
  quantum fields}, Cambridge University Press, 2013.

\bibitem[ES09]{ES}
L.~Erd{\H{o}}s and B.~Schlein, \emph{Quantum dynamics with mean field
  interactions: a new approach}, Journal of Statistical Physics \textbf{134}
  (2009), no.~5, 859--870.

\bibitem[ESY06]{ESY}
L.~Erd{\H{o}}s, B.~Schlein, and H.-T. Yau, \emph{Derivation of the
  {Gross-Pitaevskii} hierarchy for the dynamics of {Bose-Einstein} condensate},
  Communications on Pure and Applied Mathematics \textbf{59} (2006), no.~12,
  1659--1741.

\bibitem[ESY07]{ESY2}
\bysame, \emph{Derivation of the cubic non-linear {Schr{\"o}dinger} equation
  from quantum dynamics of many-body systems}, Inventiones Mathematicae
  \textbf{167} (2007), no.~3, 515--614.

\bibitem[ESY09]{ESY5}
\bysame, \emph{Rigorous derivation of the gross-pitaevskii equation with a
  large interaction potential}, Journal of the American Mathematical Society
  \textbf{22} (2009), no.~4, 1099--1156.

\bibitem[ESY10]{ESY4}
\bysame, \emph{Derivation of the {Gross-Pitaevskii} equation for the dynamics
  of {Bose-Einstein} condensate}, Annals of Mathematics \textbf{172} (2010),
  no.~1, 291--370.

\bibitem[EY01]{EY}
L.~Erd{\H{o}}s and H.-T. Yau, \emph{Derivation of the nonlinear
  {Schr{\"o}dinger} equation from a many-body {Coulomb} system}, Advances in
  Theoretical and Mathematical Physics \textbf{5} (2001), no.~6, 1169--1205.

\bibitem[FKS09]{FKS}
J.~Fr{\"o}hlich, A.~Knowles, and S.~Schwarz, \emph{On the mean-field limit of
  bosons with coulomb two-body interaction}, Communications in Mathematical
  Physics \textbf{288} (2009), no.~3, 1023--1059.

\bibitem[Fol89]{Folland}
G.~B. Folland, \emph{Harmonic analysis in phase space}, Annals of Mathematics
  Studies, no. 122, Princeton University Press, 1989.

\bibitem[GM13a]{GM1}
M.~Grillakis and M.~Machedon, \emph{Beyond mean field: On the role of pair
  excitations in the evolution of condensates}, Journal of Fixed Point Theory
  and Applications \textbf{14} (2013), no.~1, 91--111.

\bibitem[GM13b]{GM2}
\bysame, \emph{Pair excitations and the mean field approximation of interacting
  {Bosons}, {I}}, Communications in Mathematical Physics \textbf{324} (2013),
  no.~2, 601--636.

\bibitem[GM17]{GM3}
\bysame, \emph{Pair excitations and the mean field approximation of interacting
  {Bosons}, {II}}, Communications in Partial Differential Equations \textbf{42}
  (2017), no.~1, 24--67.

\bibitem[GM19]{GM4}
\bysame, \emph{Uniform in $n$ estimates for a {B}osonic system of
  {H}artree--{F}ock--{B}ogoliubov type}, Communications in Partial Differential
  Equations \textbf{44} (2019), no.~12, 1431--1465.

\bibitem[GMM10]{GMM}
M.~Grillakis, M.~Machedon, and D.~Margetis, \emph{Second-order corrections to
  mean field evolution of weakly interacting {Bosons}. {I}.}, Communications in
  Mathematical Physics \textbf{294} (2010), no.~1, 273--301.

\bibitem[GMM11]{GMM2}
\bysame, \emph{Second-order corrections to mean field evolution of weakly
  interacting {Bosons}. {II}}, Advances in Mathematics \textbf{228} (2011),
  no.~3, 1788--1815.

\bibitem[GMM17]{GMM3}
\bysame, \emph{Evolution of the boson gas at zero temperature: Mean-field limit
  and second-order correction}, Quarterly of Applied Mathematics \textbf{75}
  (2017), no.~1, 69--104.

\bibitem[Gol16]{Gol}
Fran{\c{c}}ois Golse, \emph{On the dynamics of large particle systems in the
  mean field limit}, Macroscopic and Large Scale Phenomena: Coarse Graining,
  Mean Field Limits and Ergodicity, Springer, 2016, pp.~1--144.

\bibitem[GV79a]{GV}
J.~Ginibre and G.~Velo, \emph{The classical field limit of scattering theory
  for non-relativistic many-boson systems. {I}}, Communications in Mathematical
  Physics \textbf{66} (1979), no.~1, 37--76.

\bibitem[GV79b]{GV2}
\bysame, \emph{The classical field limit of scattering theory for
  non-relativistic many-boson systems. {II}}, Communications in Mathematical
  Physics \textbf{68} (1979), no.~1, 45--68.

\bibitem[Hep74]{Hepp}
K.~Hepp, \emph{The classical limit for quantum mechanical correlation
  functions}, Communications in Mathematical Physics \textbf{35} (1974), no.~4,
  265--277.

\bibitem[KP10]{KP}
Antti Knowles and Peter Pickl, \emph{Mean-field dynamics: singular potentials
  and rate of convergence}, Communications in Mathematical Physics \textbf{298}
  (2010), no.~1, 101--138.

\bibitem[Kuz15]{Kuz}
E.~Kuz, \emph{Rate of convergence to mean field for interacting {Bosons}},
  Communications in Partial Differential Equations \textbf{40} (2015), no.~10,
  1831--1854.

\bibitem[Kuz17]{Kuz2}
\bysame, \emph{Exact evolution versus mean field with second-order correction
  for {Bosons} interacting via short-range two-body potential}, Differential
  and Integral Equations \textbf{30} (2017), no.~7/8, 587--630.

\bibitem[LGS15]{LGS}
T.~Langen, R.~Geiger, and J.~Schmiedmayer, \emph{Ultracold atoms out of
  equilibrium}, Annu. Rev. Condens. Matter Phys. \textbf{6} (2015), no.~1,
  201--217.

\bibitem[LSSY05]{Lieb}
E.~H. Lieb, R.~Seiringer, J.~P.~Philip Solovej, and J.~Yngvason, \emph{The
  mathematics of the bose gas and its condensation}, vol.~34, Springer Science
  \& Business Media, 2005.

\bibitem[NN17]{NN}
P.~T. Nam and M.~Napi{\'o}rkowski, \emph{Bogoliubov correction to the
  mean-field dynamics of interacting bosons}, Advances in Theoretical and
  Mathematical Physics \textbf{21} (2017), no.~3, 683--738.

\bibitem[RS09]{RS}
I.~Rodnianski and B.~Schlein, \emph{Quantum fluctuations and rate of
  convergence towards mean field dynamics}, Communications in Mathematical
  Physics \textbf{291} (2009), no.~1, 31--61.

\bibitem[Sol14]{Solo}
J.~P. Solovej, \emph{Many body quantum mechanics}, Lecture Notes. Summer
  (2014), 1--102.

\bibitem[Tao06]{Tao}
T.~Tao, \emph{Nonlinear dispersive equations: local and global analysis}, CBMS
  Regional Conference Series in Mathematics, no. 106, American Mathematical
  Society, 2006.

\end{thebibliography}

\end{document}